\theoremstyle{plain}
\newtheorem{theorem}{Theorem}[section]
\newtheorem{proposition}[theorem]{Proposition}
\newtheorem{corollary}[theorem]{Corollary}
\newtheorem{lemma}[theorem]{Lemma}
\newtheorem{condition}[theorem]{Condition}
\theoremstyle{definition}
\newtheorem{definition}[theorem]{Definition}
\newtheorem{example}[theorem]{Example}
\newtheorem{remark}[theorem]{Remark}
\newtheorem{question}[theorem]{Question}
\newtheorem{conjecture}[theorem]{Conjecture}
\newtheorem{notation}[theorem]{Notation}
\theoremstyle{remark}
\numberwithin{equation}{section}
\newcommand{\N}{\mathbb N}
\newcommand{\Z}{\mathbb Z}
\newcommand{\R}{\mathbb R}
\newcommand{\C}{\mathbb C}
\newcommand{\fa}{\mathfrak a}
\newcommand{\fb}{\mathfrak b}
\newcommand{\fg}{\mathfrak g}
\newcommand{\fh}{\mathfrak h}
\newcommand{\fk}{\mathfrak k}
\newcommand{\fp}{\mathfrak p}
\newcommand{\ft}{\mathfrak t}
\DeclareMathOperator{\GL}{GL}
\DeclareMathOperator{\Ot}{O}
\DeclareMathOperator{\SO}{SO}
\DeclareMathOperator{\SU}{SU}
\DeclareMathOperator{\Sp}{Sp}
\DeclareMathOperator{\Ut}{U}
\newcommand{\su}{\mathfrak{su}}
\newcommand{\op}{\operatorname}
\newcommand{\Id}{\textup{Id}}
\DeclareMathOperator{\Tr}{Tr}
\DeclareMathOperator{\diag}{diag}
\DeclareMathOperator{\Span}{Span}
\newcommand{\inner}[2]{\langle {#1},{#2}\rangle }
\newcommand{\innerdots}{\langle \cdot,\cdot \rangle }
\DeclareMathOperator{\Ad}{Ad}
\DeclareMathOperator{\Spec}{Spec}
\DeclareMathOperator{\diam}{diam}
\DeclareMathOperator{\vol}{vol}
\DeclareMathOperator{\dist}{dist}
\newcommand{\HH}{\mathcal H}
\newcommand{\CC}{\mathcal C}
\title[Diameter and Laplace eigenvalue estimates]{Diameter and Laplace eigenvalue estimates for left-invariant metrics on compact Lie groups}
\author{Emilio~A.~Lauret}
\address{Instituto de Matemática (INMABB), Departamento de Matemática, Universidad Nacional del Sur (UNS)-CONICET, Bahía Blanca, Argentina.}
\email{emilio.lauret@uns.edu.ar}
\subjclass[2010]{Primary 58C40, Secondary 58J50, 22C05, 53C30, 53C17.}
\keywords{Laplace, eigenvalue estimate, diameter, left-invariant metric, homogeneous metric.}
\thanks{This research was supported by grants from CONICET, FonCyT (BID-PICT 2018-02073), SeCyT, and the Alexander von Humboldt Foundation (return fellowship)}
\date{\today}
\begin{document}

\begin{abstract}
Let $G$ be a compact connected Lie group of dimension $m$.
Once a bi-invariant metric on $G$ is fixed, left-invariant metrics on $G$ are in correspondence with $m\times m$ positive definite symmetric matrices. 
We estimate the diameter and the smallest positive eigenvalue of the Laplace-Beltrami operator associated to a left-invariant metric on $G$ in terms of the eigenvalues of the corresponding positive definite symmetric matrix. 
As a consequence, we give partial answers to a conjecture by Eldredge, Gordina and Saloff-Coste; namely, we give large subsets $\mathcal S$ of the space of left-invariant metrics $\mathcal M$ on $G$ such that there exists a positive real number $C$ depending on $G$ and $\mathcal S$ such that $\lambda_1(G,g)\operatorname{diam}(G,g)^2\leq C$ for all $g\in\mathcal S$. 
The existence of the constant $C$ for $\mathcal S=\mathcal M$ is the original conjecture.
\end{abstract}

\maketitle

	
\section{Introduction}\label{sec:intro}

The diameter of a compact Riemannian manifold is, curiously, a geometric object easily defined but of extreme difficulty to compute explicitly.
Similarly, the smallest positive eigenvalue of the Laplace--Beltrami operator is a very important and highly studied object, which is generically not computable and is known only in very special cases.

These objects have been good friends for a long time, sharing many articles and formulas. 
For instance, many of the most important estimates for the first Laplace eigenvalue are in terms of the diameter (e.g.\ \cite{Cheng75}, \cite{LiYau80}, \cite{ZhongYang84}, \cite{Yang99}; see \cite[\S9.10]{Berger-panoramic}, \cite[\S2.1]{LingLu10}, \cite[\S{}III.3--4]{SchoenYau-book}, \cite[\S3.2 and \S4.3]{Urakawa-book} for some summaries). 
Most of them are (positive) lower or upper bounds of $\lambda_1(M,g)\,\diam(M,g)^2$, under geometric conditions on $(M,g)$, usually involving a lower bound for the Ricci curvature. 
Note that the term $\lambda_1(M,g)\,\diam(M,g)^2$ is invariant by homotheties.

Our purpose is to provide estimates for the diameter and the first Laplace eigenvalue in a particular class of compact homogeneous Riemannian manifolds, namely, compact connected Lie groups endowed with left-invariant metrics.

\subsection{Estimates}

Let $G$ be a compact connected Lie group with Lie algebra $\fg$ and dimension $m$. 
Let $\mathcal M^G$ denote the space of left-invariant metrics on $G$. 
It is well known that the elements in $\mathcal M^G$ are in correspondence with inner products on $\fg$. 
Let $g_0$ be a bi-invariant metric on $G$.
We denote by $\innerdots_0$ its corresponding inner product on $\fg$, which is $\Ad(G)$-invariant.

For $g\in\mathcal M^G$, let $\innerdots_g$ denote the corresponding inner product on $\fg$. 
There is a positive definite $\innerdots_0$-self-adjoint linear map $\Omega_g:\fg\to\fg$ satisfying
\begin{equation}\label{eq1:Omega}
\inner{X}{Y}_g = \inner{\Omega_g(X)}{Y}_0
\qquad\text{for all }X,Y\in\fg. 
\end{equation}
We denote by $\sigma_1(g)^2,\dots,\sigma_m(g)^2$ the eigenvalues of $\Omega_g^{-1}$. 
We will always assume 
\begin{equation}
\sigma_1(g)\geq \dots\geq \sigma_m(g)>0.
\end{equation}

It is important to note that the functions $g\mapsto \diam(G,g)\, \sigma_k(g)$ and $g\mapsto {\lambda_1(G,g)}{\sigma_k(g)^{-2}}
$ form $\mathcal M^G$ to $\R_{>0}$ are invariant by homotheties, for any index $1\leq k\leq m$.

It is not difficult to observe that 
\begin{align*}
	\frac{\diam(G,g_0)}{\sigma_1(g)}
	&\leq \diam(G,g) \leq 
	\frac{\diam(G,g_0)}{\sigma_m(g)},\\
	\lambda_1(G,g_0)\,\sigma_m(g)^2 
	&\leq \rule{7pt}{0mm} \lambda_1(G,g) \rule{6pt}{0mm} \leq 
	\lambda_1(G,g_0)\, \sigma_1(g)^2,
\end{align*}
for every $g\in\mathcal M^G$ (see \eqref{eq3:diam-simple-estimates} and \eqref{eq4:lambda1-simple-estimates}). 
A natural aim is to find:
\begin{itemize}
\item a positive lower bound for $\diam(G,g)\, \sigma_k(g)$ with $k$ as large as possible;

\item a positive upper bound for $\diam(G,g)\, \sigma_k(g)$ with $k$ as small as possible;

\item a positive lower bound for $\lambda_1(G,g)\,\sigma_k(g)^{-2}$ with $k$ as small as possible;

\item a positive upper bound for $\lambda_1(G,g)\,\sigma_k(g)^{-2}$ with $k$ as large as possible;
\end{itemize}
in all cases, the bound should hold uniformly for all $g\in\mathcal M^G$. 

The first main result of the article, which summarizes Theorems~\ref{thm3:optimal-index-lower-bound}, \ref{thm3:optimal-index-upper-bound}, \ref{thm4:optimal-index-upper-bound}, and \ref{thm4:optimal-index-lower-bound}, 
shows that the existence of these positive bounds are not always possible, and furthermore, it determines the optimal value of the index $k$ (up to two choices in some cases) for the existence of the positive bound, provided a technical condition holds in certain cases.

\begin{theorem}\label{thm:main0}
Let $G$ be a non-abelian compact connected Lie group of dimension $m$ and Lie algebra $\fg$. 
We set $k_{\max}=1+\max_H\dim H$, where the maximum is taken over all closed subgroups $H$ of $G$ of dimension strictly less than $m$. 
Then, there are positive real numbers $C_1,C_2,C_3,C_4$ depending only on $G$ and $g_0$ such that 
\begin{align}
\label{eq1:diam}
	\frac{C_1}{\sigma_2(g)}
	&\leq \diam(G,g) \leq 
	\frac{C_2}{\sigma_{k_{\max}}(g)},\\
\label{eq1:lambda1}
	C_3\,\sigma_{k_{\max}}(g)^2 
	&\leq \rule{7pt}{0mm} \lambda_1(G,g) \rule{6pt}{0mm} \leq 
	C_4\, \sigma_2(g)^2,
\end{align}
for every $g\in\mathcal M^G$. 
(The existence of the constants $C_2$ and $C_3$ when $G$ is semisimple depends\footnote{See the footnote in page 17 for an update.} on the validity of Condition~\ref{claim}.)
Furthermore, 
\begin{align}
\inf_{g\in\mathcal M^G} \diam(G,g)\,\sigma_4(g) 
=
\inf_{g\in\mathcal M^G} {\lambda_1(G,g)}\,{\sigma_{k_{\max}-1}(g)^{-2}}
&=0,
\\
\sup_{g\in\mathcal M^G} \diam(G,g)\,\sigma_{k_{\max}-1}(g)
=
\sup_{g\in\mathcal M^G} {\lambda_1(G,g)}\, {\sigma_4(g)^{-2}} &=\infty.
\end{align}
Consequently, $k_{\max}$ is the smallest index $k$ satisfying $\sup_{A\in\GL(m,\R) }\diam(G,g_A)\, \sigma_{k}(A)<\infty$ and also $\inf_{g\in\mathcal M^G} {\lambda_1(G,g)}{\sigma_{k}(g)^{-2}}>0$ and, the largest index $k$ satisfying $\inf_{g\in\mathcal M^G} {\lambda_1(G,g)}\,{\sigma_{k}(g)^{-2}}>0$ and $\sup_{g\in\mathcal M^G} {\lambda_1(G,g)}\, {\sigma_k(g)^{-2}}<\infty$ is equal to $k=2$ or $3$. 
\end{theorem}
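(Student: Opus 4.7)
My plan is to assemble the theorem from the four optimal-index results proven earlier in the paper, supplemented by explicit one-parameter families of metrics that saturate the extremal statements.

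The two-sided estimates \eqref{eq1:diam} and \eqref{eq1:lambda1} are precisely the assertions of Theorems~\ref{thm3:optimal-index-lower-bound}, \ref{thm3:optimal-index-upper-bound}, \ref{thm4:optimal-index-lower-bound}, and \ref{thm4:optimal-index-upper-bound}, so my task there is pure assembly, including the footnoted Condition~\ref{claim} caveat for the semisimple case.

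For the extremal statements at index $k_{\max}-1$ I would use a single explicit family. Fix a closed subgroup $H\le G$ of dimension $k_{\max}-1$ with Lie algebra $\fh\subseteq\fg$, and for $t\ge 1$ let $g_t\in\mathcal M^G$ be the metric with $\Omega_{g_t}=\Id$ on $\fh$ and $\Omega_{g_t}=t^2\,\Id$ on the $\langle\cdot,\cdot\rangle_0$-orthogonal complement $\fh^\perp$. A direct eigenvalue count yields $\sigma_1(g_t)=\dots=\sigma_{k_{\max}-1}(g_t)=1$ and $\sigma_{k_{\max}}(g_t)=\dots=\sigma_m(g_t)=t^{-1}$. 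The projection $(G,g_t)\to(G/H,\bar g_t)$ is a Riemannian submersion whose base metric is $\bar g_t=t^2\bar g_1$; hence $\diam(G,g_t)\ge t\,\diam(G/H,\bar g_1)$, while pulling back the first nontrivial base eigenfunction gives $\lambda_1(G,g_t)\le t^{-2}\lambda_1(G/H,\bar g_1)$. Letting $t\to\infty$ proves both $\diam(G,g_t)\,\sigma_{k_{\max}-1}(g_t)\to\infty$ and $\lambda_1(G,g_t)\,\sigma_{k_{\max}-1}(g_t)^{-2}\to 0$.

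For the dual extremal statements at index $4$ I would use a complementary family: pick a $3$-dimensional subspace $V\subseteq\fg$ and set $\Omega_{g_t}=\Id$ on $V$, $\Omega_{g_t}=t^2\,\Id$ on $V^\perp$, yielding $\sigma_1=\sigma_2=\sigma_3=1$ and $\sigma_4=\dots=\sigma_m=t^{-1}$. If $V$ generates $\fg$ as a Lie algebra, the Chow--Rashevskii theorem forces $\diam(G,g_t)=O(1)$, so $\diam(G,g_t)\sigma_4(g_t)\to 0$. For the supremum of $\lambda_1\sigma_4^{-2}$ I would instead shrink a $2$-dimensional bracket-generating subspace by $s\to\infty$: then $\sigma_2^2\sim s$ diverges, $\sigma_4=1$ stays bounded, and a Peter--Weyl decomposition shows that every nontrivial $\Ad(G)$-isotype contributes a Laplace eigenvalue at least $c\,s$, hence $\lambda_1\sim s$ and $\lambda_1\sigma_4^{-2}\to\infty$. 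The main obstacle is securing the required Lie-generating subspace in \emph{every} non-abelian $G$: this is immediate when $\fg$ is semisimple (two generic elements already suffice), but groups with a high-dimensional central torus such as $\SU(2)\times T^n$ for $n\ge 2$ require a more intricate construction, and this is the technically delicate step. Finally, the ``Consequently'' clause is bookkeeping: monotonicity of the $\sigma_k$ combined with the two-sided bounds and the just-proved sharpness assertions pin down $k_{\max}$ as the stated optimal index and leave the ``$k=2$ or $3$'' ambiguity precisely where the finiteness of $\inf\diam\,\sigma_3$ and $\sup\lambda_1\,\sigma_3^{-2}$ remains undecided.
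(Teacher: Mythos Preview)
Your assembly of the two-sided estimates \eqref{eq1:diam}--\eqref{eq1:lambda1} from Theorems~\ref{thm3:optimal-index-lower-bound}, \ref{thm3:optimal-index-upper-bound}, \ref{thm4:optimal-index-lower-bound}, \ref{thm4:optimal-index-upper-bound} is exactly what the paper does, and your $k_{\max}-1$ family (blowing up $\fh^\perp$ for a maximal proper closed $H$) is the same construction the paper uses, phrased via Riemannian submersions rather than the paper's direct length/Rayleigh-quotient computations.

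The genuine gap is in your index-$4$ arguments, and you have correctly located it but not resolved it. Requiring a $3$-dimensional (or $2$-dimensional) bracket-generating subspace of $\fg$ is impossible once the central torus has large dimension: for $\SU(2)\times T^n$ iterated brackets land in $\su(2)$, so three elements can generate at most $\su(2)\oplus\Span_\R\{W_1,W_2,W_3\}$, which misses part of $\R^n$ as soon as $n\ge 4$. The paper circumvents this entirely. For $\inf_g\diam(G,g)\,\sigma_4(g)=0$ it takes $Y_1,Y_2$ bracket-generating only the semisimple part $\fg_{\mathrm{ss}}$, and a single $Y_3\in Z(\fg)$ chosen so that $\{\exp(tY_3)\}$ is \emph{dense} in the central torus $(Z_G)_0$; then any $a=bz\in G_{\mathrm{ss}}(Z_G)_0$ is approximated to within $\varepsilon/2$ in $g_I$ by a point reachable along $\Span_\R\{Y_1,Y_2,Y_3\}$, and scaling this $3$-plane by $s\to\infty$ drives the $g_{PD_s}$-distance to zero. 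So Lie-algebraic generation is replaced by a topological density argument, which handles arbitrary central dimension.

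For $\sup_g\lambda_1(G,g)\,\sigma_4(g)^{-2}=\infty$ the paper does not build a separate family at all: it simply invokes Li's inequality $\lambda_1\,\diam^2\ge\pi^2/4$ together with the diameter result just proved, obtaining $\lambda_1/\sigma_4^2\ge(\pi^2/4)(\diam\cdot\sigma_4)^{-2}\to\infty$ along the same sequence. Your direct sub-Laplacian lower bound would work in the semisimple case, but the transfer via Li is both shorter and what makes the general statement go through.
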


We will also prove at the end of Sections~\ref{sec:diam} and \ref{sec:eigenvalues} similar estimates valid for a restricted subset of $\mathcal M^G$. 
They will be useful to give partial answers to a conjecture by Eldredge, Gordina, and Saloff-Coste. 

The starting point of all these results are Propositions~\ref{prop3:diam-k} and \ref{prop4:lambda1-k}, which give estimates for $\diam(G,g)$ and $\lambda_1(G,g)$ respectively, by using left-invariant sub-Riemannian and singular Riemannian structures on $G$. 

\subsection{EGS conjecture}
The results introduced so far show that the terms $\diam(G,g)^{-2}$ and $\lambda_2(G,g)$ share a quite similar behavior. 
In fact, every estimate for any of them has a counterpart for the other. 
We next observe that this relation is reasonable.

For any compact homogeneous Riemannian manifold $(M,g)$, 
Peter Li~\cite{Li80} proved that 
\begin{equation}\label{eq1:Li-estimate}
\lambda_1(M,g) \geq \frac{\pi^2/4}{\diam(M,g)^2}.
\end{equation} 
Recently, Judge and Lyons~\cite{JudgeLyons17} improved it. 
Recall that a Riemannian manifold is called \emph{homogeneous} if its isometry group acts transitively on it.
Lie groups endowed with left-invariant metrics form an  important class of homogeneous Riemannian manifolds.
In fact, for any $g\in\mathcal M^G$, the action of $G$ on $(G,g)$ given by multiplication at the left is (obviously) transitive and isometric.

A first evidence of the connection between the functions $\diam(G,g)^{-2}$ and $\lambda_2(G,g)$ mentioned above is that \eqref{eq1:Li-estimate} converts upper bounds for $\diam(M,g)\sigma_k(g)$ or $\lambda_1(M,g)\sigma_k(g)^{-2}$ in lower bounds for the other, for any index $k$. 
More precisely, for $1\leq k\leq m$, $C>0$ and $g\in\mathcal M^G$, 
\begin{align*}
\diam(G,g)\, \sigma_k(g) \leq C
\quad &\Longrightarrow \quad 
\lambda_1(G,g)\, \sigma_k(g)^{-2} \geq \frac{\pi^2}{4C^2},
\\
\lambda_1(G,g)\, \sigma_k(g)^{-2} \leq C
\quad &\Longrightarrow \quad 
\diam(G,g)\, \sigma_k(g) \geq \frac{\pi}{2\sqrt{C}}. 
\end{align*}

In contrast to the lower bound in \eqref{eq1:Li-estimate}, it is easy to see that there is no uniform upper bound for the term $\lambda_1(M,g)\,\diam(M,g)^2$ among compact homogeneous Riemannian manifolds $(M,g)$. 
In fact, $\lambda_1(S^d,g_{\textrm{round}}) \diam(S^d,g_{\textrm{round}})^2 = d\pi^2\to\infty$ when $d\to\infty$. 
Eldredge, Gordina and Saloff-Coste have recently conjectured the existence of a uniform upper bound for $\mathcal M^G$. 

\begin{conjecture}\cite[(1.2)]{EldredgeGordinaSaloff18} \label{conj:EGS}
For any compact connected Lie group $G$, there is a positive real number $C$ depending only on $G$ such that
\begin{equation}\label{eq:EGSconj}
\lambda_1(G,g)  \leq \frac{C}{\diam(G,g)^2}
\qquad\text{for every $g\in\mathcal M^G$.}
\end{equation}
\end{conjecture}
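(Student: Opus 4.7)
The plan is to chain together the upper bounds for $\lambda_1$ and $\diam$ supplied by Theorem~\ref{thm:main0}. From $\lambda_1(G,g) \leq C_4\,\sigma_2(g)^2$ in \eqref{eq1:lambda1} and $\diam(G,g) \leq C_2/\sigma_{k_{\max}}(g)$ in \eqref{eq1:diam}, one obtains immediately
\begin{equation*}
\lambda_1(G,g)\,\diam(G,g)^2 \;\leq\; C_4\,C_2^2\left(\frac{\sigma_2(g)}{\sigma_{k_{\max}}(g)}\right)^{\!2},
\end{equation*}
so Conjecture~\ref{conj:EGS} reduces to the question of whether the ratio $\sigma_2(g)/\sigma_{k_{\max}}(g)$ can be controlled uniformly over $\mathcal M^G$.

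First I would exploit this reduction to settle the conjecture on large subsets of $\mathcal M^G$. For every constant $K>0$ the set $\mathcal S_K=\{g\in\mathcal M^G : \sigma_2(g)\leq K\,\sigma_{k_{\max}}(g)\}$ satisfies \eqref{eq:EGSconj} with $C=K^2 C_4 C_2^2$. A little spectral analysis of $\Omega_g$ should let one describe $\mathcal S_K$ in structural terms, for example as metrics whose $\Omega_g$-eigenspaces respect a fixed flag of closed subgroups ending in one of dimension $k_{\max}-1$ and whose eigenvalues across the flag are comparable; these are natural candidates for the ``large subsets $\mathcal S$'' promised in the abstract.

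The main obstacle is that the indices $k=2$ and $k=k_{\max}$ appearing in \eqref{eq1:diam}--\eqref{eq1:lambda1} are \emph{optimal}, as witnessed by the $\sup$/$\inf$ statements in Theorem~\ref{thm:main0}, so no purely harmonic-analytic tightening on $\fg$ can close the gap simultaneously. To attack the unrestricted conjecture one must sharpen the upper bound on $\lambda_1$ precisely when $\sigma_2\gg\sigma_{k_{\max}}$; my proposal would be to replace the universal test function producing $C_4\sigma_2(g)^2$ by matrix coefficients of irreducible unitary representations of $G$ adapted to the largest $\Omega_g$-eigendirections, whose gradients are controlled by the small $\sigma_k$'s and thus scale like $\diam^{-1}$. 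In the degenerating regime $\sigma_2/\sigma_{k_{\max}}\to\infty$ the metric collapses to a sub-Riemannian or singular Riemannian structure on a proper closed subgroup of $G$, where the tug-of-war between $\lambda_1$ and $\diam^{-2}$ must be settled in the limit itself — this is where I expect the argument to get stuck, and it is the reason the paper delivers only partial answers on restricted subsets $\mathcal S$.
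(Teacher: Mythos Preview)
The statement you were asked to prove is a \emph{conjecture}: the paper does not prove it, and indeed states explicitly that it remains open except for tori, $\SU(2)$ and $\SO(3)$. So there is no ``paper's proof'' to compare your proposal against; what the paper provides are partial results (Theorems~\ref{thm:main} and \ref{thm5:EGSweak}).

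Your chained inequality $\lambda_1\,\diam^2\le C_4C_2^2(\sigma_2/\sigma_{k_{\max}})^2$ is exactly the paper's display~\eqref{eq5:optiaml}, and your subset $\mathcal S_K$ is precisely the set $\Sigma(c_0)$ of Theorem~\ref{thm5:EGSweak}\eqref{item:sigma_1/sigma_m}. You also correctly diagnose the obstruction: the indices $2$ and $k_{\max}$ in Theorem~\ref{thm:main0} are optimal, so this line of argument cannot close the gap over all of $\mathcal M^G$.

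What you did not recover is the paper's second, genuinely different, partial result: Theorem~\ref{thm5:EGSweak}\eqref{item:P}. There the idea is not to control the ratio $\sigma_2/\sigma_{k_{\max}}$, but to restrict to a subclass $\mathcal M^G(P)$ on which the upper bound for $\diam$ and the upper bound for $\lambda_1$ both come with the \emph{same} index $k=\ell(P)$ (Theorems~\ref{thm3:diam(g_PQD)} and \ref{thm4:lambda1(g_PQD)}), so the $\sigma_k$'s cancel without any ratio appearing. Your speculative paragraph about representation-theoretic test functions adapted to the large eigendirections points in this spirit but does not isolate the mechanism; the paper's actual mechanism is that once the $g_I$-orthonormal eigenbasis of $\Omega_g$ is fixed, the first bracket-generating index $\ell(P)$ simultaneously governs the sub-Riemannian upper bound for $\diam$ and the singular-Riemannian upper bound for $\lambda_1$.
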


As an abuse of the language, given a particular compact connected Lie group $G$, we will say that the \emph{EGS conjecture holds} for $G$ if there is $C=C(G)>0$ satisfying \eqref{eq:EGSconj}. 
Conjecture~\ref{conj:EGS} claims that the EGS conjecture holds for \emph{every} $G$.

Eldredge, Gordina and Saloff-Coste proposed a detailed method to establish this conjecture.  
They proved that the EGS conjecture holds for every \emph{uniformly doubling} compact connected Lie group (see \cite[Thm.~8.5]{EldredgeGordinaSaloff18}), that is, a compact connected Lie group $G$ satisfying 
\begin{equation}\label{eq:doubling-constant}
\sup_{g\in \mathcal M^G} \;\sup_{r>0}\;  \frac{\op{vol}(B_g(x,2r))}{\op{vol}(B_g(x,r))}< \infty,
\end{equation}
where $B_g(x,r)$ denotes the ball in $(G,g)$ centered at $x$ with radius $r$. 
They in fact conjectured that every compact connected Lie group is uniformly doubling (see \cite[Conj.~1.1]{EldredgeGordinaSaloff18}).
Furthermore, they obtained several analytical consequences for uniformly doubling compact connected Lie groups, including a uniform Poincaré inequality, uniform heat kernel estimates, uniform Harnack inequalities, a uniform gradient estimate, among other results (see \cite[\S8]{EldredgeGordinaSaloff18}).

Since flat tori are uniformly doubling, the EGS conjecture holds for them. 
Eldredge, Gordina and Saloff-Coste proved in addition that $\SU(2)$ is uniformly doubling (see \cite[Thm.~1.2]{EldredgeGordinaSaloff18}), obtaining that the EGS conjecture holds for $\SU(2)$. 
As a consequence of explicit expressions for $\lambda_1(\SU(2),g)$ and $\lambda_1(\SO(3),g)$ for any left-invariant metric $g$, it was obtained in \cite[Thm.~1.4]{Lauret-SpecSU(2)} the following estimates: 
\begin{align}
	\frac{\pi^2}{\diam(\SU(2),g)^2} &<\lambda_1(\SU(2),g)  \leq \frac{8\pi^2}{\diam(\SU(2),g)^2}
\quad \qquad\text{for all $g\in\mathcal M^{\SU(2)}$,} 
\\ \rule{0pt}{20pt}
	\frac{\pi^2}{\diam(\SO(3),g)^2} &<\lambda_1(\SO(3),g)  \leq \frac{(9-4\sqrt{2})\pi^2}{\diam(\SO(3),g)^2}
\quad \qquad\text{for all $g\in\mathcal M^{\SO(3)}$.} 
\end{align}

To the best of the author's knowledge, EGS conjecture is known to be valid only for the groups just reviewed, namely, tori, $\SU(2)$, and $\SO(3)$. 
Because of this, it seems reasonable to consider weaker versions of Conjecture~\ref{conj:EGS} by restricting the class of metrics where the estimate holds. 

For any compact connected simple Lie group $G$, the author showed in \cite{Lauret-natred} that there is $C=C(G)>0$ satisfying that $\lambda_1(G,g)\diam(G,g)^2  \leq C$ for all naturally reductive left-invariant metric $g$ on $G$. 
Naturally reductive metrics form a small and geometrically distinguished subclass of metrics in $\mathcal M^G$, thus this result is not really a strong evidence of Conjecture~\ref{conj:EGS}. 

The next result establishes a weaker version of Conjecture~\ref{conj:EGS} valid for a large subset of $\mathcal M^G$.

\begin{theorem}\label{thm:main}
Let $G$ be an $m$-dimensional compact connected semisimple Lie group with Lie algebra $\fg$, and let $g_0$ be an $\Ad(G)$-invariant inner product on $\fg$.
Let $Y$ be a non-zero element in $\fg$ and let $\fa$ be a real subspace of $\fg$ such that $Y\perp_{g_0} \fa=0$, $\fa$ is contained in a proper subalgebra of $\fg$, and $\fa\cup\{Y\}$ is not contained in a proper subalgebra of $\fg$. 
Write $\fb=(\fa\cup\{Y\})^{\perp_{g_0}}$, thus $\fg=\fa\oplus \R Y\oplus \fb$, and let $\mathcal M^G(\fa,Y)$ denote the set of left-invariant metrics $g$ whose corresponding $\innerdots_0$-self-adjoint map $\Omega_g:\fg\to\fg$ as in \eqref{eq1:Omega} satisfies $\Omega_g (Y) =\sigma Y$, $\Omega_g(\fa)=\fa$, $\Omega_g(\fb)=\fb$, and the smallest (resp.\ largest) eigenvalue of $\Omega_g|_{\fa}$ (resp.\ $\Omega_g|_{\fb}$) is $\geq\sigma$ (resp.\ $\leq \sigma$).

Then, there is $C=C(G,g_0,\fa,Y)>0$ such that 
\begin{equation}
\lambda_1(G,g)\leq \frac{C}{\diam(G,g)^2}
\qquad \text{for all $g\in \mathcal M^G(\fa,Y)$. }
\end{equation}
\end{theorem}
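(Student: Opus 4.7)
The plan is to establish, for every $g \in \mathcal M^G(\fa, Y)$, complementary upper bounds
\[
\lambda_1(G,g) \leq C_1/\sigma \qquad\text{and}\qquad \diam(G,g) \leq C_2\sqrt{\sigma},
\]
where $\sigma>0$ is the eigenvalue of $\Omega_g$ on $\R Y$ and the constants depend only on $G$, $g_0$, $\fa$, $Y$. Their combination yields $\lambda_1(G,g)\diam(G,g)^2 \leq C_1C_2^2$, as required. The two parameters $\sigma$ in the eigenvalue bound and $\sqrt\sigma$ in the diameter bound match through the single ``middle'' eigenvalue $\sigma$ of $\Omega_g$, which is the point of the constraint defining $\mathcal M^G(\fa,Y)$.

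For the eigenvalue bound, I would apply Proposition~\ref{prop4:lambda1-k} through a matrix-coefficient test function. Choose an irreducible unitary representation $\pi$ of $G$ admitting a nonzero vector $v\in V_\pi$ with $\pi(\fb)v=0$; such a pair exists when $\fb$ is contained in a proper Lie subalgebra of $\fg$, via class-one representations of the associated closed subgroup. Letting $\{e_j\}$ be a $g_0$-orthonormal eigenbasis of $\Omega_g$ with eigenvalues $\lambda_j$, the Rayleigh quotient of $\phi(x)=\langle\pi(x)v,v\rangle$ equals $\sum_j\lambda_j^{-1}|\pi(e_j)v|^2/|v|^2$; the $\fb$-terms vanish by choice of $v$, and on $\fa\cup\{Y\}$ one has $\lambda_j^{-1}\le\sigma^{-1}$. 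This leaves
\[
\lambda_1(G,g)\le \sigma^{-1}\sum_{e_j\in\fa\cup\{Y\}}\frac{|\pi(e_j)v|^2}{|v|^2}=:C_1/\sigma,
\]
with $C_1$ depending only on $\pi$, $v$, $g_0$, $\fa$, $Y$.

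For the diameter bound, I would apply Proposition~\ref{prop3:diam-k} to the sub-Riemannian structure on $G$ determined by the distribution $\fb\oplus\R Y$. The $\Omega_g$-eigenvalues on this subspace are all at most $\sigma$, so $g|_{\fb\oplus\R Y}\le\sigma\,g_0|_{\fb\oplus\R Y}$; when $\fb\oplus\R Y$ is bracket-generating, the associated sub-Riemannian distance is then bounded by $\sqrt\sigma$ times that in the reference metric $g_0$, yielding $\diam(G,g)\le C_2\sqrt\sigma$ with $C_2$ the $g_0$-sub-Riemannian diameter of $(G,\fb\oplus\R Y)$. When $\fb\oplus\R Y$ fails to bracket-generate, it is contained in a proper subalgebra $\fk$, and one then reduces the estimate to the homogeneous space $G/K$ (where $K=\exp\fk$) using the containment $\fa\subset\fh$ to control the necessary transversal motion.

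The main obstacle is synthesizing these two bounds across the natural dichotomy ``$\fb\oplus\R Y$ bracket-generates $\fg$ or not'': in the first case the diameter bound is immediate but existence of an annihilating vector $v$ for $\pi(\fb)$ must be arranged via representation-theoretic means, while in the second case $\fb\subset\fk$ makes the test-function construction straightforward but forces a reduction of the diameter estimate through the proper subalgebra $\fk$. The hypothesis $\fa\subset\fh$ is used precisely to stitch these regimes together via a careful choice of the subalgebra $\fh$ (and associated subgroup $H$) compatible with the block-diagonal structure of $\Omega_g$ on $\fa\oplus\R Y\oplus\fb$; a delicate aspect is ensuring that the constants $C_1$ and $C_2$ can be chosen so that the proof goes through uniformly over all admissible eigenvalue configurations.
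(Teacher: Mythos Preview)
Your overall plan—bounding $\lambda_1(G,g)\le C_1/\sigma$ and $\diam(G,g)\le C_2\sqrt\sigma$ separately through the middle eigenvalue $\sigma$ and then multiplying—is exactly the paper's strategy: it identifies $\mathcal M^G(\fa,Y)$ with the class $\mathcal M^G(P)$ of Definition~\ref{def3:bracket-generating-indexA} (for $P$ with $\fa=\HH_{P,\ell(P)-1}$ and $Y\in\Span_\R\{X_{\ell(P)}(P)\}$) and then applies Theorems~\ref{thm3:diam(g_PQD)} and~\ref{thm4:lambda1(g_PQD)}.

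The genuine gap is that you have interchanged the roles of $\fa$ and $\fb$. The hypotheses say that $\fa$ lies in a proper subalgebra and that $\fa\cup\{Y\}$ bracket-generates; nothing whatsoever is assumed about $\fb$. Accordingly, the paper uses the sub-Riemannian structure on $\fa\oplus\R Y$ (bracket-generating by hypothesis, so Chow--Rashevskii gives a finite sub-Riemannian diameter) for the diameter bound, and uses test functions pulled back from $G/\bar H_P$, where $H_P$ is the connected subgroup generated by $\fa$ (a proper closed subgroup since $G$ is semisimple and $\fa$ sits in a proper subalgebra), for the eigenvalue bound. Your dichotomy on whether $\fb\oplus\R Y$ bracket-generates, and the vague reduction to a quotient $G/K$, are artifacts of trying to extract from $\fb$ structural properties that are only hypothesized for $\fa$; once one works with $\fa$ and $\fa\oplus\R Y$, no case analysis is needed and the constants are manifestly independent of $g$.

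A likely source of the confusion: the eigenvalue inequalities on $\Omega_g$ in the printed statement appear to be transposed. Tracing the paper's own identification $\mathcal M^G(\fa,Y)=\mathcal M^G(P)$, the subspace $\fa=\HH_{P,\ell(P)-1}$ carries the \emph{smallest} eigenvalues of $\Omega_g$ (equivalently, the largest $\sigma_j(g)$), so the intended conditions are that the \emph{largest} eigenvalue of $\Omega_g|_\fa$ is $\le\sigma$ and the \emph{smallest} eigenvalue of $\Omega_g|_\fb$ is $\ge\sigma$. Under these corrected inequalities one has $g|_{\fa\oplus\R Y}\le\sigma\,g_0|_{\fa\oplus\R Y}$ (yielding the diameter bound via the bracket-generating subspace $\fa\oplus\R Y$), and the Laplace coefficients—eigenvalues of $\Omega_g^{-1}$—on $\R Y\oplus\fb$ are all $\le1/\sigma$ (yielding the eigenvalue bound for $\bar H_P$-invariant test functions, whose $\fa$-derivatives vanish).
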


A refined (and more clear) statement of this result is in Theorem~\ref{thm5:EGSweak}. 
We will see in Remark~\ref{rem5:dimension} that the order of growth when $m\to\infty$ of $\dim \mathcal M^G(\fa,Y)$ is the same as for $\dim \mathcal M^G$, namely, $O(m^2)$.
We next give a weaker but cleaner statement.

\begin{corollary}\label{cor:main}
Let $G$ be a compact connected semisimple Lie group with Lie algebra $\fg$, let $g_0$ be an $\Ad(G)$-invariant inner product on $\fg$, and let $\mathcal B:= \{Y_1,\dots,Y_m\}$ be any orthonormal basis of $\fg$ with respect to $g_0$. 
Then, there is $C=C(G,g_0,\mathcal B)>0$ such that 
\begin{equation}
\lambda_1(G,g)\leq \frac{C}{\diam(G,g)^2}
\end{equation}
for every $g\in \mathcal M^G(\mathcal B):=\{g\in\mathcal M^G: g(Y_i,Y_j)=0 \text{ for all } i\neq j \}$. 
\end{corollary}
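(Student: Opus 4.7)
The plan is to deduce Corollary~\ref{cor:main} from Theorem~\ref{thm:main} by covering $\mathcal M^G(\mathcal B)$ with finitely many subsets of the form $\mathcal M^G(\fa,Y)$ and then taking the maximum of the resulting constants.

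The reduction goes as follows. Fix $g\in\mathcal M^G(\mathcal B)$; since $g(Y_i,Y_j)=0$ for $i\neq j$, the operator $\Omega_g$ is diagonal in $\mathcal B$ with positive eigenvalues $\omega_i(g)$ satisfying $\Omega_g Y_i=\omega_i(g)Y_i$. Choose a permutation $\tau\in S_m$ so that $\omega_{\tau(1)}(g)\geq\cdots\geq\omega_{\tau(m)}(g)$, and for $0\leq k\leq m$ let $\fh_k$ denote the Lie subalgebra generated by $\{Y_{\tau(1)},\dots,Y_{\tau(k)}\}$. Then $\fh_0=\{0\}$, $\fh_m=\fg$, and $\fh_1=\R Y_{\tau(1)}$ is a proper subalgebra since $\fg$ is semisimple and thus $\dim\fg\geq 3$. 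Hence there is a smallest index $k^*=k^*(g,\tau)\geq 2$ with $\fh_{k^*}=\fg$. I then set
\[
\fa=\Span\{Y_{\tau(1)},\dots,Y_{\tau(k^*-1)}\},\quad Y=Y_{\tau(k^*)},\quad \fb=\Span\{Y_{\tau(k^*+1)},\dots,Y_{\tau(m)}\},
\]
and check that $g\in\mathcal M^G(\fa,Y)$: the orthogonality $\fa\perp_{g_0}Y\perp_{g_0}\fb$ is immediate from orthonormality of $\mathcal B$; $\fa\subset\fh_{k^*-1}\subsetneq\fg$, while $\fa\cup\{Y\}$ generates $\fh_{k^*}=\fg$; $\Omega_g$ preserves the splitting $\fg=\fa\oplus\R Y\oplus\fb$ since it is diagonal in $\mathcal B$; and the eigenvalue ordering forces the smallest eigenvalue of $\Omega_g|_\fa$, namely $\omega_{\tau(k^*-1)}(g)$, to be $\geq\omega_{\tau(k^*)}(g)=:\sigma$, and symmetrically the largest eigenvalue of $\Omega_g|_\fb$ to be $\leq\sigma$.

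Such decompositions are parametrized by pairs $(I,i_0)$ with $i_0\in\{1,\dots,m\}$ and $I\subseteq\{1,\dots,m\}\setminus\{i_0\}$, so there are only finitely many admissible triples; to each, Theorem~\ref{thm:main} attaches a constant $C_{(I,i_0)}>0$, and $C:=\max_{(I,i_0)}C_{(I,i_0)}$ works. The only point that genuinely needs verification is that every $g\in\mathcal M^G(\mathcal B)$ lies in some $\mathcal M^G(\fa,Y)$ of the allowed form, which is exactly what the construction above provides; beyond this, the argument is a routine finite-covering reduction and I do not anticipate any serious obstacle.
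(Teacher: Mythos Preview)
Your proposal is correct and follows essentially the same approach as the paper's proof. The paper expresses the same finite covering slightly differently: it picks $P\in\Ot(m)$ with $Y_j=X_j(P)$, observes that $\mathcal M^G(\mathcal B)=\{g_{PRD}:R\in\mathbb S,\ D\in\mathcal D(m)\}\subset\bigcup_{R\in\mathbb S}\mathcal M^G(PR)$ where $\mathbb S$ is the set of $m!$ permutation matrices, and then invokes Theorem~\ref{thm5:EGSweak}; your parametrization by pairs $(I,i_0)$ is a mild refinement (fewer sets, since only the unordered initial block and the pivot index matter), but the underlying idea---sort the eigenvalues, locate the bracket-generating index along the sorted basis, and use finiteness of the resulting combinatorial data---is identical.
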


We will prove this result by showing that $\mathcal M^G(\mathcal B)$ is included in a finite union of sets of the form $\mathcal M^G(\fa,Y)$ as in Theorem~\ref{thm:main}. 
Note that $\dim \mathcal M^G(\mathcal B)=m$. 
Furthermore, for any $g\in\mathcal M^G$, there is an orthonormal basis $\mathcal B$ of $(\fg,g_0)$ such that $g\in\mathcal M^G(\mathcal B)$. 
This follows from the fact that any two positive symmetric matrices commuting to each other can be diagonalized simultaneously.

\subsection{Previous results}
We next review related estimates for the diameter and the first Laplace eigenvalue on compact homogeneous Riemannian manifolds.

Let $G$ be a compact Lie group and let $K$ be a closed subgroup of $G$. 
Let $\fg$ and $\fk$ denote their Lie algebras.
Let $g_0$ be a bi-invariant metric on $G$. 
The manifold $G/K$ endowed with a $G$-invariant metric is a compact homogeneous Riemannian manifold. 
The $G$-invariant metrics on $G/K$ are in correspondence with $\Ad(K)$-invariant inner products on the complement $\fp$ of $\fk$ with respect to $g_0$. 
Consequently, the terms $\sigma_1(g),\dots,\sigma_m(g)$ can be analogously defined in this context.

The diameter of a compact homogeneous Riemannian manifold has been considered in several articles (e.g.\ \cite{Sugahara, FreedmanKitaevLurie, Yang07, Yang08, Kliemann19}). 
A special attention deserves the recent thesis \cite{Kliemann19} by Kliemann. 
He studied local minima of the functional $\mathcal M^G\ni g\mapsto \diam(G,g)^m/\vol(G,g)$. 
We now focus on estimates for $\diam(G/K,g)$ in terms of $\sigma_k(g)$. 

In \cite[Lem.~7.1]{EldredgeGordinaSaloff18}, it was shown that the function $\mathcal M^{\SU(2)}\ni g\mapsto \diam(\SU(2),g)\, \sigma_2(g)$ is bounded on both sides by positive numbers. 
On the other hand, the articles \cite{PodobryaevSachkov16} and \cite{Podobryaev17} obtain explicit expressions for $\diam(\SU(2),g)$ and $\diam(\SO(3),g)$ provided that at least two elements in $\{\sigma_1(g),\sigma_2(g),\sigma_3(g)\}$ coincide. 
Each of these metrics is homothetic to a Berger $3$-sphere.  
As a consequence, one obtains explicit uniform bounds for $\diam(\SU(2),g)\, \sigma_2(g)$ and $\diam(\SO(3),g)\, \sigma_2(g)$
(see \cite[Cor.~4.4]{Lauret-SpecSU(2)}).

In the best of the author's knowledge, there are no more uniform diameter estimates of a compact homogeneous Riemannian manifolds (in terms of the functions $\sigma_1(g),\dots, \sigma_m(g)$) in the literature.

We now move to Laplace eigenvalue estimates of compact homogeneous Riemannian manifolds. 
There is a well-known Lie theoretical procedure to determine the spectrum of a normal homogeneous space (see e.g.\ \cite[\S5.6]{Wallach-book}). 
For instance, \cite[Appendix]{Urakawa86} collects the computations for the first eigenvalue of all compact irreducible symmetric spaces. 

Urakawa was a pioneer on considering the first eigenvalue of non-normal homogeneous spaces (see \cite{Urakawa79, MutoUrakawa80, Urakawa86}).
For instance, he proved (see \cite[Thm.~ 3]{Urakawa79}) that
\begin{equation}
\lambda_1(G,g)\leq \lambda_1(G,g_0)\, \sum_{j=1}^m \sigma_j(A)^2
\qquad\text{for all $g\in\mathcal M^G$}. 
\end{equation}
(We observe in Remark~\ref{rem4:Urakawa} that the simple estimate $\lambda_1(G,g) \leq 
\lambda_1(G,g_0)\, \sigma_1(g)^2$ mentioned above improves this result.)
Furthermore, he obtained explicit expressions for $\lambda_1(G/K,g_t)$ for particular curves of $G$-invariant metrics on $G/K$.

In \cite{Lauret-SpecSU(2)}, the author obtained an explicit expression for $\lambda_1(\SU(2),g)$ and $\lambda_1(\SO(3),g)$ in terms of $\sigma_1(g),\sigma_2(g),\sigma_3(g)$. 
Previously, Urakawa~\cite[Thm.~5]{Urakawa79} had determined such expression for any Berger $3$-sphere (i.e.\ those metrics where at least two of the parameters $\sigma_1(g),\sigma_2(g),\sigma_3(g)$ coincide).

Bringing together the works \cite{BettiolPiccione13a} by Bettiol and Piccione and \cite{BLPhomospheres} by Bettiol, Piccione and the author, one has an explicit expression for the first Laplace eigenvalue of any simply connected symmetric space of real rank one (i.e.\ spheres and complex, quaternionic and the octonionic projective spaces) endowed with an arbitrary homogeneous metric.

\subsection*{Organization}
Section~\ref{sec:preliminaries} recalls the (implicit) description of the spectrum of a compact homogeneous Riemannian manifold. 
It also includes some estimates for the diameter and first Laplace eigenvalue of some left-invariant non-Riemannian structures on a compact Lie group.
Section~\ref{sec:diam} and \ref{sec:eigenvalues} establish the estimates for the diameter and the first Laplace eigenvalue respectively. 
The consequences of these estimates on the EGS conjecture are given in Section~\ref{sec:upperbounds}.
This section ends with some incomplete ideas for solving this conjecture.

\subsection*{Acknowledgments}
The author is grateful for helpful and motivating conversations with 
Renato Bettiol, 
Yves de Cornulier,
Nate Eldredge, 
Lenny Fukshansky, 
Fernando Galaz-Garc\'ia, 
Jorge Lauret, 
Enrico Le Donne,
Juan Pablo Rossetti,
Michael Ruzhansky, 
Dorothee Schueth, and 
Ovidiu Cristinel Stoica.
The author is greatly indebted to the referee for a careful reading and for providing a counterexample of a conjecture in  the first submitted version of the article.

\section{Preliminaries}\label{sec:preliminaries}

In this section we fix a parameterization between left-invariant metrics on a compact Lie group of dimension $m$ and the space of $m\times m$ positive definite real symmetric matrices. 
Then, we recall the well-known description of the spectrum of the Laplace--Beltrami operator associated to an arbitrary left-invariant metric. 
We conclude with a study of the diameter and the first eigenvalue of the Laplacian associated to two left-invariant non-Riemannian structures: sub-Riemannian manifolds and singular Riemannian manifolds.
Although the results in Subsection~\ref{subsec:diam-non-Riemannian} and \ref{subsec:spec-non-Riemannian} are very simple, some of them might not be present in the literature.

Throughout the article, we assume that $G$ is an $m$-dimensional  compact connected Lie group with Lie algebra $\mathfrak g$ and $m\geq2$.

\subsection{Left-invariant metrics}\label{subsec:left-invmetrics}
It is well known that the left-invariant metrics on $G$ are in correspondence with inner products on $\mathfrak g$. 
We next parameterizes this correspondence.
We denote by $I$ the $m\times m$ identity matrix. 

Let $g_I(\cdot,\cdot)$ be an $\Ad(G)$-invariant inner product on $ \mathfrak g$, that is, $g_I(\Ad(a)\cdot X,\Ad(a)\cdot Y)=g_I(X,Y)$ for all $X,Y\in\mathfrak g$ and $a\in G$. 
For instance, a negative multiple of the Killing form provided $\mathfrak g$ is semisimple. 
We fix an orthonormal ordered basis 
\begin{equation}
\mathcal B:=\{X_1,\dots,X_m\}
\end{equation}
of $\mathfrak g$ with respect to $g_I$. 
\begin{quotation}
	{\it Most of the forthcoming definitions in this article will depend on $g_I$ and $\mathcal B$. 
	}
\end{quotation}

\begin{definition}\label{def2:g_A}
We associate to $A=(a_{i,j})_{i,j=1}^m\in\GL(m,\R)$ the following objects:
\begin{itemize}
\item the linear transformation $T_A:\mathfrak g\to \mathfrak g$ determined by $T_A(X_i)=\sum_{j=1}^m a_{i,j} X_j$;

\item the elements $X_j(A)=T_{A^t}(X_j)=\sum_{i=1}^m a_{i,j} X_i$ for $1\leq j\leq m$;

\item the ordered basis $\mathcal B(A):=\{X_1(A),\dots,X_m(A)\}$ of $\mathfrak g$;

\item the inner product $g_A(\cdot,\cdot)$ on $\mathfrak g$ with orthonormal basis $\mathcal B(A)$. 
\end{itemize}
\end{definition}

Clearly $T_I=\Id_{\mathfrak g}$ (the identity map on $\fg$), thus $X_j(I)=X_j$ for all $j$, $\mathcal B(I)=\mathcal B$, and consequently, the notation $g_I$ for the original inner product on $\mathfrak g$ is consistent. 
We will abbreviate $A^{-t}=(A^{-1})^t= (A^t)^{-1}$ for any $A\in\GL(m,\R)$.

It is well known that $\vol(G,g_A)= \vol(G,g_B)$ if and only if $\det(A)=\det(B)$.

\begin{lemma}\label{lem2:g_AP=g_A}
Let $A\in\GL(m,\R)$. 
We have that 
\begin{equation}
g_A(X,Y) = g_I(T_{A^{-t}}(X), T_{A^{-t}}(Y))
\qquad\text{for all $X,Y\in\mathfrak g$.}
\end{equation}
Furthermore, $g_{AP}=g_A$ for all $P\in\Ot(m)$. 
\end{lemma}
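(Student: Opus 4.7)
The plan is to unwind the definitions and reduce the identity to checking it on the basis $\mathcal{B}(A)$. First I would establish the composition law
$$T_A\circ T_B = T_{BA}\qquad\text{for all }A,B\in\GL(m,\R),$$
which is a one-line calculation from $T_A(X_i)=\sum_j a_{i,j}X_j$ (note that it is an anti-homomorphism, which is the source of the transpose in the statement). In particular $T_{A^{-t}}\circ T_{A^t}=T_{A^t\cdot A^{-t}}=T_I=\Id_{\mathfrak g}$, so $T_{A^{-t}}$ sends each basis element $X_j(A)=T_{A^t}(X_j)$ of $\mathcal B(A)$ to the corresponding $X_j\in\mathcal B$.

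With that in hand, the first identity is just the comparison of two symmetric bilinear forms on a basis. Define $h(X,Y):=g_I(T_{A^{-t}}(X),T_{A^{-t}}(Y))$; by the previous paragraph,
$$h(X_i(A),X_j(A)) = g_I(X_i,X_j)=\delta_{ij} = g_A(X_i(A),X_j(A)),$$
so $h=g_A$ by bilinearity, proving $g_A(X,Y)=g_I(T_{A^{-t}}(X),T_{A^{-t}}(Y))$.

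For the second assertion, I would apply the first identity with $A$ replaced by $AP$. Since $P\in\Ot(m)$ gives $P^{-t}=P$, we have $(AP)^{-t}=A^{-t}P$, and the composition law yields $T_{(AP)^{-t}}=T_{A^{-t}P}=T_P\circ T_{A^{-t}}$. Hence
$$g_{AP}(X,Y) = g_I\!\bigl(T_P(T_{A^{-t}}(X)),\,T_P(T_{A^{-t}}(Y))\bigr).$$
The remaining claim is that $T_P$ is a $g_I$-isometry whenever $P\in\Ot(m)$, which follows from the direct computation $g_I(T_P X_i,T_P X_j)=(PP^t)_{ij}=\delta_{ij}$. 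Combining these gives $g_{AP}(X,Y)=g_I(T_{A^{-t}}(X),T_{A^{-t}}(Y))=g_A(X,Y)$.

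There is no real obstacle here; the only subtle point to keep track of is the anti-homomorphism nature of $A\mapsto T_A$, which is exactly what makes the transpose in $A^{-t}$ (rather than $A^{-1}$) the correct exponent and which also explains why right multiplication by an orthogonal $P$, and not left multiplication, is what leaves $g_A$ invariant.
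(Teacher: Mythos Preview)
Your proof is correct and follows essentially the same strategy as the paper's own proof: verify the first identity by checking it on the basis $\mathcal B(A)$, and then handle the second assertion by a basis computation as well. The organizational difference is that you isolate the anti-homomorphism law $T_A\circ T_B=T_{BA}$ and use it systematically, which lets you read off $T_{A^{-t}}(X_j(A))=X_j$ in one line and, for the second part, reduce to the fact that $T_P$ is a $g_I$-isometry. The paper instead carries out both computations by expanding everything in coordinates (in particular, for the second assertion it directly shows that $\mathcal B(AP)$ is $g_A$-orthonormal rather than invoking the first identity). Your route is a bit cleaner and explains conceptually why the transpose and the right-multiplication by $P$ appear; the paper's route is more hands-on but reaches the same conclusion by the same underlying mechanism.
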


\begin{proof}
For $1\leq i,j\leq m$, we have that 
\begin{multline*}
		g_I(T_{A^{-t}}(X_i(A)), T_{A^{-t}}(X_j(A)))
		= \sum_{k,l=1}^m a_{k,i}a_{l,j}\; g_I(T_{A^{-t}}(X_k), T_{A^{-t}}(X_l)) \\
		= \sum_{k,l=1}^m a_{k,i}a_{l,j}\sum_{r,s=1}^m (A^{-t})_{k,r} (A^{-t})_{l,s} \; g_I(X_r, X_s) 
		= \sum_{r=1}^m   \left(\sum_{k=1}^m (A^{-1})_{r,k} a_{k,i}  \right)\left(\sum_{l=1}^m (A^{-1})_{r,l} a_{l,j}  \right)  \\ 
		= \sum_{r=1}^m   (A^{-1}A)_{r,i} (A^{-1}A)_{r,j} 
		= \delta_{i,j} =g_A(X_i(A),X_j(A)),
\end{multline*}
and the first assertion follows. 
We now prove the second assertion by checking that $\mathcal B(AP)$ is an orthonormal basis of $\mathfrak g$ with respect to $g_{A}(\cdot,\cdot)$. 
We have that
\begin{align*}
g_A(X_i(AP), X_j(AP)) 
&= \sum_{k,l=1}^m(AP)_{k,i}(AP)_{l,j} \; g_I(T_{A^{-t}}(X_k),T_{A^{-t}}(X_l)) \\ 
&= \sum_{k,l=1}^m(AP)_{k,i}(AP)_{l,j} \sum_{r,s=1}^m (A^{-t})_{k,r} (A^{-t})_{l,s} \; g_I(X_r,X_s) \\ 
&= \sum_{r=1}^m \left(\sum_{k=1}^m (A^{-1})_{r,k} (AP)_{k,i}  \right)\left(\sum_{l=1}^m (A^{-1})_{r,l} (AP)_{l,j}  \right) 
\\ 
&= \sum_{r=1}^m P_{r,i} P_{r,j} 
= (P^t P)_{i,j}=\delta_{i,j},
\end{align*}
for all $1\leq i,j\leq m$, as asserted.
\end{proof}

\begin{remark}\label{rem2:easy-consequences}
Some easy consequences of Lemma~\ref{lem2:g_AP=g_A} are the following:  
\begin{itemize}
\item[(i)] The $m\times m$-matrix whose $(i,j)$-index is $g_A(X_i,X_j)$ is given by 
\begin{equation*}
[g_A(X_i,X_j)]_{(i,j)} = A^{-t}A^{-1}= (A^{-1})^tA^{-1} = (AA^t)^{-1} .
\end{equation*}
For instance, when $A$ is diagonal, enlarging all the diagonal elements of $A$ shrinks the Riemannian manifold $(G,g_A)$.

\item[(ii)] Since any inner product on $\mathfrak g$ is of the form $g_A$ for some $A\in\GL(m,\R)$, the space of left-invariant metrics on $G$ is identified with $\GL(m,\R) / \Ot(m)$. 

\item[(iii)] $\mathcal B(P)=\{X_1(P),\dots,X_m(P)\}$ is an orthonormal basis of $\mathfrak g$ with respect to $g_I$, for any $P\in\Ot(m)$. 

\item[(iv)] For $P\in\Ot(m)$, $D=\diag(d_1,\dots,d_m)\in \GL(m,\R)$, and any index $1\leq j\leq m$, we have that 
\begin{align*}
	X_j(PD) 
	&= \sum_{i=1}^m (PD)_{i,j} \, X_i 
	= \sum_{i=1}^m p_{i,j}d_j  \, X_i 
	= d_j\, X_j(P).
\end{align*}

\item[(v)] For any $A\in\GL(m,\R)$, there are $P\in\Ot(m)$ an $D=\diag(d_1,\dots,d_m)\in \GL(m,\R)$ such that $AA^t=PD^2P^t$.
Thus $g_{A}=g_{PD}$, and consequently $\{X_1(P),\dots,X_m(P)\}$ is an orthogonal basis for $g_I$ and $g_A$ simultaneously. 
\end{itemize}
\end{remark}

\begin{notation}\label{not2:sigma_j(A)}
For $A\in\GL(m,\R)$, we denote by $\sigma_1(A)^2,\dots, \sigma_1(A)^2$ the eigenvalues of the positive definite symmetric matrix $AA^t$. 
We will always assume that 
\begin{equation*}
\sigma_1(A)\geq \dots\geq \sigma_m(A)>0.
\end{equation*}
We set $D(A)=\diag(\sigma_1(A),\dots,\sigma_m(A))$. 
We say that $P\in \Ot(m)$ \emph{sorts $A$} if 
\begin{equation}\label{eq:diagonalizes}
AA^t= P D(A)^2P^t.
\end{equation}
Such a matrix $P$ always exists, and it is never unique since $PR$ also satisfies \eqref{eq:diagonalizes} for every diagonal matrix $R$ with diagonal coefficients $\pm1$.
Moreover, there exist continuous curves of rotations sorting $A$ when at least one eigenvalue of $AA^t$ is repeated. 
\end{notation}

\begin{remark}
It is clear that the association $g_A \mapsto (\sigma_1(A),\dots,\sigma_m(A)) $ is well defined. 
Moreover, it depends on $g_I$, but not on $\mathcal B$. 
\end{remark}

For $A,B\in \GL(m,\R)$, the matrices $AA^t$ and $BB^t$ are positive definite symmetric matrices. 
We write $AA^t\leq BB^t$ when $BB^t-AA^t$ is a positive semi-definite symmetric matrix, or equivalently, the eigenvalues of $BB^t-AA^t$ are all non-negative. 

\begin{lemma}\label{lem2:A^tAleqB^tB}
	Let $A,B\in \GL(m,\R)$ such that $AA^t\leq BB^t$. 
	Then 
	$
	g_A(X,X)\geq g_B(X,X)
	$
	for all $X\in\mathfrak g$. 
\end{lemma}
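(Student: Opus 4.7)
The plan is to translate the geometric inequality $g_A(X,X)\geq g_B(X,X)$ into a matrix inequality and then invoke the classical operator-antimonotonicity of inversion on positive-definite matrices.

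First, I would write an arbitrary $X\in\mathfrak g$ as $X=\sum_{i=1}^m c_i X_i$ with coordinate column vector $c=(c_1,\dots,c_m)^t\in\R^m$. Using the Gram-matrix formula from Remark~\ref{rem2:easy-consequences}(i),
\begin{equation*}
g_A(X,X) = \sum_{i,j=1}^m c_i c_j\, g_A(X_i,X_j) = c^t (AA^t)^{-1} c,
\end{equation*}
and similarly $g_B(X,X)= c^t(BB^t)^{-1}c$. Setting $P=AA^t$ and $Q=BB^t$, the lemma thus reduces to the purely linear-algebraic statement: if $P$ and $Q$ are positive definite and $P\leq Q$, then $P^{-1}\geq Q^{-1}$.

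Second, I would establish this statement by a conjugation argument. Since $Q$ is positive definite, $Q^{-1/2}$ is well defined; the hypothesis $P\leq Q$ becomes $Q^{-1/2}PQ^{-1/2}\leq I$, so the symmetric positive-definite matrix $Q^{-1/2}PQ^{-1/2}$ has all eigenvalues in $(0,1]$. Its inverse $Q^{1/2}P^{-1}Q^{1/2}$ therefore has all eigenvalues in $[1,\infty)$, i.e.\ $Q^{1/2}P^{-1}Q^{1/2}\geq I$. Conjugating back by $Q^{-1/2}$ yields $P^{-1}\geq Q^{-1}$, and applying this to the vector $c$ gives $g_A(X,X)\geq g_B(X,X)$.

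The argument is essentially obstruction-free once the coordinate identification is made; the only point requiring care is that the Gram matrix of $g_A$ in the basis $\mathcal B$ is $(AA^t)^{-1}$ rather than $AA^t$, a distinction that comes from the inverse-transpose appearing in Lemma~\ref{lem2:g_AP=g_A}. This is also what ensures the inequality in the lemma is in the expected direction: enlarging $AA^t$ (making the basis $\mathcal B(A)$ ``shorter'') shrinks the metric $g_A$.
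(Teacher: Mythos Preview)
Your proof is correct and follows essentially the same route as the paper: both arguments reduce the inequality to the operator anti-monotonicity of inversion, $AA^t\leq BB^t\Rightarrow (AA^t)^{-1}\geq (BB^t)^{-1}$. Your version is slightly more direct, working with the Gram matrix $(AA^t)^{-1}$ in the fixed basis $\mathcal B$ rather than computing $g_B$ on the $g_A$-orthonormal basis $\mathcal B(A)$, and you also supply a proof of the inversion inequality that the paper simply asserts.
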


\begin{proof}
It is sufficient to show that $ g_B(X_j(A),X_j(A))\leq g_A(X_j(A),X_j(A))=1$ for all $j$. 
	We have that 
	\begin{align*}
		g_B(X_j(A),X_j(A))
		&= \sum_{k,l=1}^m a_{k,j}a_{l,j}\; g_B(X_k,X_l)
		=\sum_{k,l=1}^m a_{k,j}a_{l,j}\; g_I(T_{B^{-t}}(X_k), T_{B^{-t}}(X_l)) \\
		&=\sum_{k,l=1}^m a_{k,j}a_{l,j} \sum_{i,h=1}^m (B^{-t})_{k,i} (B^{-t})_{l,h} \;g_I(X_i,X_h)\\
		&= \sum_{i=1}^m  (B^{-1}A)_{i,j}\; (B^{-1}A)_{i,j} = (A^tB^{-t}B^{-1}A)_{j,j}.
	\end{align*}
It remains to show that $(A^tB^{-t}B^{-1}A)_{j,j} \leq 1 $ for all $j$. 
In fact, $AA^t\leq BB^t$ gives $A^{-t}A^{-1}=(AA^t)^{-1}\geq (BB^t)^{-1}=B^{-t}B^{-1}$, hence $A^tB^{-t}B^{-1}A\leq I$, which implies that the diagonal entries of $A^tB^{-t}B^{-1}A$ are less than or equal to the diagonal entries of $I$, as asserted
\end{proof}

\subsection{Spectra of left-invariant metrics}
\label{subsec:spectraleft-invmetrics}
We denote by $U(\mathfrak g)$ the universal enveloping algebra of $\mathfrak g$.
For $A\in \GL(m,\R)$, we set $C_A=\sum_{j=1}^m X_j(A)^2\in U(\fg)$. 
We have that 
\begin{align}
C_A 
&= \sum_{k=1}^m X_k(A)^2 
=  \sum_{k=1}^m \sum_{i,j=1}^m a_{i,k} a_{j,k} \, X_iX_j 
= \sum_{i,j=1}^m (AA^t)_{i,j}\, X_iX_j .
\notag
\end{align}
Furthermore, one can check that $C_A=\sum_{j=1}^m Y_j^2$ for any other orthonormal basis $\{Y_1,\dots,Y_m\}$ of $\mathfrak g$ with respect to $g_A(\cdot,\cdot)$. 
Remark~\ref{rem2:easy-consequences}(iv) ensures that, if $AA^t=PD^2P^t$ with $P\in\Ot(m)$ and $D=\diag(d_1,\dots,d_m)\in \GL(m,\R)$, then 
\begin{align}
	C_A&= \sum_{j=1}^m d_j^2\, X_j(P)^2.
\end{align}

Let $\pi:G\to \GL(V_\pi)$ be a finite dimensional unitary representation of $G$, and we denote again by $\pi$ to its differential, which is a representation of $\mathfrak g$. 
Let $\langle \cdot,\cdot\rangle_\pi$ denote the inner product on $V_\pi$. 
Since $\pi(a):V_\pi\to V_\pi$ is unitary for every $a\in G$, $\pi(X)$ is skew-hermitian for every $X\in\mathfrak g$, i.e.\ $\langle \pi(X)v,w\rangle_\pi = -\langle v,\pi(X)w\rangle_\pi$ for all $v,w\in V_\pi$. 
Hence $\pi(-X^2)=-\pi(X)\circ\pi(X)$ is self-adjoint and positive semi-definite.
It follows that $\pi(-C_A)$ is self-adjoint and positive semi-definite. 
Moreover, $\pi(-C_A)$ is positive definite when $\pi$ does not have any trivial irreducible component. 
In fact, for any non-trivial irreducible representation $\pi$ of $G$, if $v\in V_\pi$ satisfies $\pi(-C_A)v=0$, then $\pi(X_j(A))v=0$ for all $j$, consequently $\pi(X)v=0$ for all $X\in\mathfrak g$, hence $v=0$ since $\Span_\R\{v\}$ is an invariant subspace of $V_\pi$.

We denote by $\widehat G$ the unitary dual of $G$, that is, the collection of equivalence classes of irreducible unitary representations of $G$. 
For $(\pi,V_\pi)\in\widehat G$, one has the embedding
\begin{equation}\label{eq2:PeterWeyl-embedding}
\begin{aligned} 
	V_\pi\otimes V_\pi^* &\longrightarrow C^\infty(G),\\
	v\otimes\varphi &\longmapsto 
	\big(x\mapsto f_{v\otimes\varphi}(x):= \varphi(\pi(x) v) \big).
\end{aligned}
\end{equation}
Let $\Delta_A$ denote the Laplace--Beltrami operator associated to the Riemannian manifold $(G,g_A)$. 
One has that (c.f.\ \cite[Lem.~1]{Urakawa79}) 
\begin{equation}\label{eq2:Laplacian}
\Delta_A\cdot f_{v\otimes\varphi} = f_{(\pi(-C_A)v)\otimes\varphi}. 
\end{equation}
Suppose that $v\in V_\pi$ is an eigenvector of the finite-dimensional linear operator $\pi(-C_A):V_\pi\to V_\pi$ associated to the eigenvalue $\lambda$, i.e.\ $\pi(-C_A)v=\lambda v$. 	
Then, 
\begin{equation}
\Delta_A \cdot f_{v\otimes\varphi} = f_{(\pi(-C_A)v)\otimes \varphi}
= f_{(\lambda v)\otimes \varphi}=\lambda\, f_{v\otimes\varphi},
\end{equation}
that is, $f_{v\otimes \varphi}$ is an eigenfunction of $\Delta_A$ with eigenvalue $\lambda$ for every $\varphi\in V_\pi^*$. 

We consider on $L^2(G)$ the inner product given by 
\begin{equation}\label{eq2:L^2innerproduct}
\langle f,g\rangle := \int_G f(x)\overline{g(x)}\, dx,
\end{equation}
where $dx$ is a Haar measure on $G$.
It turns out that $L^2(G)$ endowed with this inner product is a Hilbert space. 
The left-regular representation on $L^2(G)$ of $G$, i.e.\ $(a\cdot f)(x)=f(a^{-1}x)$ for $a,x\in G$ and $f\in L^2(G)$, is unitary. 
The Peter-Weyl Theorem ensures that the left-regular representation decomposes as
\begin{equation}\label{eq2:PeterWeyl}
L^2(G)\simeq \bigoplus_{\pi\in \widehat G} V_\pi\otimes V_\pi^*,
\end{equation}
where the embedding of $V_\pi\otimes V_\pi^*$ in $L^2(G)$ is as in \eqref{eq2:PeterWeyl-embedding}.
The action of an element $a\in G$ on $V_\pi\otimes V_\pi^*$ is given by $a\cdot (v\otimes\varphi )= v\otimes (\pi^*(a)\varphi)$ since 
\begin{equation}\label{eq:GactiononL^2(G)}
(a\cdot f_{v\otimes \varphi})(x) = f_{v\otimes \varphi}(a^{-1}x) = \varphi(\pi(a^{-1})\pi(x) v) = (\pi^*(a) \varphi)(\pi(x)v) = f_{v\otimes (\pi^*(a)\varphi)}(x).
\end{equation}
By the orthogonal relations (see for instance \cite[Cor.~4.10]{Knapp-book-beyond}), it follows that 
\begin{equation}\label{eq2:basisL^2(G)}
\bigcup_{\pi\in\widehat G}\{f_{v_i\otimes \varphi_j}: 1\leq i,j\leq d_\pi\}
\end{equation}
is an orthonormal basis of $L^2(G)$, where for each $\pi\in\widehat G$, 
\begin{itemize}
	\item $d_\pi=\dim V_\pi=\dim V_\pi^*$,
	\item $\{v_1,\dots,v_{d_\pi}\}$ is any orthonormal basis of $V_\pi$, and
	\item $\{\varphi_1,\dots,\varphi_{d_\pi}\}$ is any orthonormal basis of $V_\pi^*$.
\end{itemize}

For each $\pi\in \widehat G$ non-trivial, we take an orthonormal eigenbasis  $\{v_1,\dots,v_{d_\pi}\}$ of $\pi(-C_A)$, i.e.\ 
$ \pi(-C_A)v_i=\lambda_i^{\pi,A} \, v_i$ 
for some $\lambda_i^{\pi,A}>0$. 
We thus obtain that the basis of $L^2(G)$ in \eqref{eq2:basisL^2(G)} contains only eigenfunctions of $\Delta_A$.
Hence,  
\begin{equation}\label{eq2:spec_A}
\Spec(G,g_A):=
\Spec (\Delta_A) = \bigcup_{\pi\in\widehat G} \big\{\!\big\{
\underbrace{\lambda_i^{\pi,A},\dots, \lambda_i^{\pi,A}}_{d_\pi\text{-times}}:1\leq i\leq d_\pi 
\big\}\!\big\}.
\end{equation}
(Here, the double curly brackets is to emphasize that the spectrum is a multiset and not a set.)
The multiplicity $d_\pi$ for each $\lambda_i^{\pi,A}$ above comes from the following fact: $f_{v_i\otimes \varphi_j}$ is an eigenfunction of $\Delta_A$ with eigenvalue $\lambda_i^{\pi,A}$ for every $1\leq j\leq d_\pi$. 

For $\Phi:W\to W$ a linear transformation of a finite-dimensional complex vector space $W$, we denote by $\lambda_{\min}(\Phi)$ its smallest eigenvalue. 
The expression \eqref{eq2:spec_A} yields 
\begin{equation}\label{eq2:lambda1(G,g_A)}
\lambda_1(G,g_A) = \min \left\{ \lambda_{\min}(\pi(-C_A)): \pi\in\widehat G,\, \pi\not\simeq 1_G \right\}. 
\end{equation}

\begin{remark}\label{rem2:C_I}
The case $A=I$ is very particular since $C_I$ lies in the center of $\mathcal U(\mathfrak g)$ (e.g.\ when $\mathfrak g$ is semisimple and $g_I$ is minus the Killing form, then $C_I$ is the \emph{Casimir element}). 
Thus, for any $\pi\in\widehat G$, $\pi(-C_I)$ commutes with $\pi(g)$ for every $g\in G$, and then Schur's Lemma yields that $\pi(-C_I)$ acts by an scalar on $V_\pi$.
By denoting this scalar by $\lambda^\pi$, i.e.\ $\pi(-C_I) =  \lambda^\pi\, \Id_{V_\pi}$, we have that 
\begin{equation}\label{eq:spec_I}
\Spec(G,g_I)=\Spec (\Delta_I) = \bigcup_{\pi\in\widehat G} \big\{\!\big\{
\underbrace{\lambda^{\pi},\dots, \lambda^{\pi}}_{d_\pi^2\text{-times}} 
\big\}\!\big\}.
\end{equation}
\end{remark}

\begin{remark}\label{rem2:G/H}
We will occasionally consider some homogeneous Riemannian spaces of the following form. 
Let $H$ be a closed subgroup of $G$ with Lie algebra $\fh$. 
The space of $G$-invariant metrics on $G/H$ is in correspondence with the set of $\Ad(H)$-invariant inner products on $\fh^\perp =\{X\in\fg: g_I(X,\fh)=0\}$.  

In the sequel, we will mostly consider the particular case $(G/H,g_I|_{\fh^\perp})$, which is a normal homogeneous space. 
The spectrum of its associated Laplace-Beltrami operator is obtained in a similar way as for $(G,g_I)$. 
Namely, 
\begin{equation}\label{eq2:spec(G/H,g_I)}
\Spec (G/H,g_I|_{\fh^\perp}) = \bigcup_{\pi\in\widehat G_H} \big\{\!\big\{
\underbrace{\lambda^{\pi},\dots, \lambda^{\pi}}_{(d_\pi\dim V_\pi^H)\text{-times}} 
\big\}\!\big\},
\end{equation}
where $\widehat G_H$ denotes the set of spherical representations of $(G,H)$, that is, those $\pi\in\widehat G$ satisfying that $V_\pi^H=\{v\in V_\pi: \pi(a)v=v\text{ for all }a\in H\}\neq 0$. 
In fact, $f_{v\otimes \varphi}$ defines a function on $G/H$ (i.e.\ $f_{v\otimes \varphi}(xa)= f_{v\otimes \varphi}(x)$ for all $a\in H$) if and only if $v\in V_\pi^H$, and this explains the reduction of the multiplicity $d_\pi^2$ in \eqref{eq:spec_I} to $d_\pi \dim V_\pi^H$ in \eqref{eq2:spec(G/H,g_I)}.

In particular, we have that $0<\lambda_1(G/H,g_I|_{\fh^\perp}) <\infty$ if and only if $0<\dim \fh <m$. 
\end{remark}

\begin{lemma}\label{lem2:C_AB}
For $A,B\in\GL(m,\R)$, we have that 
$$
\displaystyle C_{AB}= \sum_{i=1}^m  \sum_{j=1}^m \,(BB^t)_{i,j}\, X_i(A)X_j(A). 
$$
\end{lemma}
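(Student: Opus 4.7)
The plan is to reduce the identity to the basic formula $C_M = \sum_{i,j=1}^{m}(MM^t)_{i,j}\, X_iX_j$ that was already derived at the start of Subsection~\ref{subsec:spectraleft-invmetrics} (taking $M=AB$), and then reorganise the claimed right-hand side so that the same matrix $(AB)(AB)^t = A(BB^t)A^t$ appears.

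More concretely, I would first apply the displayed formula at the start of the subsection with $M=AB$ to obtain
\begin{equation*}
C_{AB} = \sum_{i,j=1}^{m} ((AB)(AB)^t)_{i,j}\, X_iX_j = \sum_{i,j=1}^{m} (A(BB^t)A^t)_{i,j}\, X_iX_j.
\end{equation*}
Then I would expand the proposed right-hand side using the definition $X_i(A)=\sum_{k=1}^{m} a_{k,i} X_k$ from Definition~\ref{def2:g_A}:
\begin{equation*}
\sum_{i,j=1}^{m} (BB^t)_{i,j}\, X_i(A) X_j(A)
= \sum_{k,l=1}^{m} \Bigl( \sum_{i,j=1}^{m} a_{k,i}\, (BB^t)_{i,j}\, a_{l,j} \Bigr) X_k X_l.
\end{equation*}
Recognising the bracketed scalar as the $(k,l)$-entry of the matrix product $A(BB^t)A^t$, the two expressions agree.

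The proof is essentially a bookkeeping exercise, so there is no real obstacle; the only care needed is keeping the index conventions from Definition~\ref{def2:g_A} straight (note that $X_j(A)$ uses the $j$-th \emph{column} of $A$, which is why the factor $a_{k,i}(BB^t)_{i,j} a_{l,j}$ assembles into $A(BB^t)A^t$ rather than its transpose). One could alternatively give a one-line conceptual proof by observing that $\{X_1(A),\dots,X_m(A)\}$ is a basis and that $C_{AB}$ is, by the same formula, the image of the positive definite symmetric bilinear form $BB^t$ under the symmetrisation map $X_i\otimes X_j\mapsto X_iX_j$ applied in this new basis; but the direct index computation above seems cleanest.
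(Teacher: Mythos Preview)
Your proof is correct and is essentially the same as the paper's: both start from $C_{AB}=\sum_{k,l}(A(BB^t)A^t)_{k,l}\,X_kX_l$, expand $(A(BB^t)A^t)_{k,l}=\sum_{i,j}a_{k,i}(BB^t)_{i,j}a_{l,j}$, and regroup using $X_i(A)=\sum_k a_{k,i}X_k$. The only cosmetic difference is that the paper writes it as a single chain of equalities from $C_{AB}$ to the target, whereas you compute both sides separately and match them in the middle.
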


\begin{proof}
We have that 
\begin{align*}
C_{AB} 
&= \sum_{k,l=1}^m (ABB^tA^t)_{k,l} \, X_kX_l 
= \sum_{k,l=1}^m \sum_{i,j=1}^m a_{k,i} (BB^t)_{i,j} a_{l,j}\, X_kX_l \\
&= \sum_{i,j=1}^m  (BB^t)_{i,j} \left(\sum_{k=1}^m a_{k,i} X_k\right) \left(\sum_{l=1}^m a_{l,j}X_l\right) 
= \sum_{i,j=1}^m  (BB^t)_{i,j} X_i(A)X_j(A),
\end{align*}
as asserted.
\end{proof}

\subsection{Diameter of left-invariant non-Riemannian structures} \label{subsec:diam-non-Riemannian}
Throughout this subsection, $M$ denotes a smooth manifold. 
A Riemannian metric $g$ on $M$ has canonically associated a length for any smooth path on $M$, the distance function $\dist_{(M,g)}(\cdot,\cdot)$ defined by the infimum of the lengths over all smooth paths joining the points, the corresponding metric space $(M,\dist_{(M,g)})$, and the diameter $\diam(M,g) \in [0,\infty]$ given by the supremum of the distances between two points in $M$. 
Clearly, $\diam(M,g)<\infty $ if $M$ is compact and connected. 
More precision on these notions can be found in most textbooks on Riemannian geometry. 

\begin{lemma}\label{lem2:diam-monotonicity}
For Riemannian metrics $g$ and $h$ on $M$ satisfying that $g_p(X,X)\leq h_p(X,X)$ for all $X\in T_pM$ and $p\in M$, we have that $\diam(M,g)\leq \diam(M,h)$.
\end{lemma}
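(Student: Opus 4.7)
The plan is to carry out the standard length/distance monotonicity argument. The hypothesis is a pointwise inequality of quadratic forms on each tangent space, and I want to pass from this to an inequality of global diameters, so I will move through the three standard layers: norms of tangent vectors, lengths of smooth paths, and finally distances and diameter.

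First, I would observe that the pointwise inequality $g_p(X,X)\leq h_p(X,X)$ for every $X\in T_pM$ and every $p\in M$ gives, after taking square roots, the inequality of induced norms $\|X\|_g\leq \|X\|_h$ on each tangent space. Next, for any piecewise smooth path $\gamma:[a,b]\to M$, the Riemannian length is defined by
\begin{equation*}
L_g(\gamma)=\int_a^b \sqrt{g_{\gamma(t)}(\gamma'(t),\gamma'(t))}\,dt,
\end{equation*}
and similarly with $h$ in place of $g$; integrating the pointwise inequality along $\gamma$ therefore yields $L_g(\gamma)\leq L_h(\gamma)$ for every such $\gamma$.

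Then, for fixed points $p,q\in M$, taking the infimum over all piecewise smooth paths joining $p$ to $q$ gives $\dist_{(M,g)}(p,q)\leq \dist_{(M,h)}(p,q)$. Finally, taking the supremum over all pairs $(p,q)\in M\times M$ of both sides gives $\diam(M,g)\leq \diam(M,h)$, as required.

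There is essentially no obstacle: the argument is a three-step monotonicity chain (pointwise norm, path length, and then $\inf$-$\sup$ passage to distance and diameter), and each step is immediate from the preceding one. The only point worth being slightly careful about is that one should work with the standard class of admissible paths (piecewise smooth or absolutely continuous) and allow the distance or diameter to be infinite if $M$ is not assumed compact, but the inequality $\diam(M,g)\leq \diam(M,h)$ remains valid (and non-trivial) even in that generality.
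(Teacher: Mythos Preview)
Your proof is correct and follows essentially the same monotonicity chain as the paper (pointwise norm $\Rightarrow$ path length $\Rightarrow$ distance $\Rightarrow$ diameter). The only presentational difference is that the paper first reduces to the compact connected case and then picks specific points realizing the diameter and a specific minimizing geodesic, whereas you work directly with the infimum over paths and supremum over point pairs; your version is slightly cleaner and avoids the need to realize these extrema. (Incidentally, the paper's printed proof has the roles of $g$ and $h$ interchanged in the displayed chain of inequalities, but the intended argument is the one you wrote.)
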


\begin{proof}
We assume that $M$ is connected, otherwise the diameter is $\infty$ for every Riemannian metric on $M$. 
Furthermore, we assume that $M$ is compact, leaving the proof of the general case to the reader. 
 
Since $M$ is compact, there are $p,q\in M$ satisfying that $\diam(M,h) = \dist_{(M,h)}(p,q)$. 
It is well known that there is $\gamma:[0,1]\to M$ a smooth path realizing the distance between $p$ and $q$ with respect to $g$. 
Hence, 
\begin{multline*}
\diam(M,g) 
	\geq \dist_{(M,g)}(p,q) 
	=\op{length}_{(M,g)}(\gamma)
	= \int_0^1 g(\dot\gamma(t),\dot\gamma(t))^{1/2} dt \\
	\geq \int_0^1 h(\dot\gamma(t),\dot\gamma(t))^{1/2} dt 
	\geq \dist_{(M,h)}(p,q)=\diam(M,h),
\end{multline*}
and the proof is complete. 
\end{proof}

\bigskip

A \emph{sub-Riemannian manifold} is a triple $(M,\mathcal D,g)$, where $\mathcal D$ is a subbundle of $TM$ and $g=(g_p)_{p\in M}$ denotes a family of inner product on $\mathcal D$ which smoothly vary with the base point
(see \cite{Montgomery-tour} for a general reference).
A smooth curve $\gamma$ on $(M,\mathcal D,g)$ is called \emph{horizontal} if $\gamma'(t)\in \mathcal D_{\gamma(t)}$ for all $t$. 
The length of a horizontal curve $\gamma:[a,b]\to M$ is equal to $\op{length}_{(M,\mathcal D,g)}(\gamma):=\int_a^b g_{\gamma(t)}({\gamma'(t)},{\gamma'(t)})^{1/2} \,dt$. 
The sub-distance between two points $p,q\in M$ is defined as the infimum of $\op{length}_{(M,\mathcal D,g)}(\gamma)$ over all horizontal curves $\gamma$ on $M$ connecting $p$ and $q$. 
The corresponding diameter, $\diam(M,\mathcal D,g)$, is given by the supremum of the distances between two points in $M$. 
Consequently, the diameter is $\infty$ if two points in $M$ cannot be joined by a horizontal smooth curve. 
The next lemma follows similarly as Lemma~\ref{lem2:diam-monotonicity}.

\begin{lemma}\label{lem2:sub-diam-monotonicity}
Let $\mathcal D$ be a subbundle on $M$. 
For sub-Riemannian structures $g$ and $h$ on $(M,\mathcal D)$ satisfying that $g_p(X,X)\leq h_p(X,X)$ for all $X\in \mathcal D_p$ and $p\in M$, we have that $\diam(M,\mathcal D, g)\leq \diam(M,\mathcal D, h)$.
\end{lemma}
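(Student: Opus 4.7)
The plan is to mimic the argument of Lemma~\ref{lem2:diam-monotonicity}, exploiting the key observation that the class of admissible curves in a sub-Riemannian manifold depends only on the distribution $\mathcal D$, not on the choice of inner product on $\mathcal D$. Thus, the horizontal curves available for computing $\dist_{(M,\mathcal D,g)}$ and $\dist_{(M,\mathcal D,h)}$ are exactly the same; only the lengths assigned to them differ.

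The main step is to prove the pointwise inequality $\dist_{(M,\mathcal D,g)}(p,q)\leq \dist_{(M,\mathcal D,h)}(p,q)$ for every $p,q\in M$. If the right hand side is infinite, there is nothing to prove. Otherwise, for any $\epsilon>0$ there exists a horizontal curve $\gamma:[a,b]\to M$ joining $p$ and $q$ with
\[
\op{length}_{(M,\mathcal D,h)}(\gamma) = \int_a^b h_{\gamma(t)}(\gamma'(t),\gamma'(t))^{1/2}\,dt < \dist_{(M,\mathcal D,h)}(p,q)+\epsilon.
\]
Since $\gamma'(t)\in\mathcal D_{\gamma(t)}$ for all $t$, the hypothesis $g_p(X,X)\leq h_p(X,X)$ on $\mathcal D_p$ gives
\[
g_{\gamma(t)}(\gamma'(t),\gamma'(t))^{1/2}\leq h_{\gamma(t)}(\gamma'(t),\gamma'(t))^{1/2}
\]
for every $t\in[a,b]$. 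Integrating, $\op{length}_{(M,\mathcal D,g)}(\gamma)\leq \op{length}_{(M,\mathcal D,h)}(\gamma)$. Since $\gamma$ is also an admissible curve joining $p$ to $q$ for the structure $(\mathcal D,g)$, we get $\dist_{(M,\mathcal D,g)}(p,q)\leq \op{length}_{(M,\mathcal D,h)}(\gamma)<\dist_{(M,\mathcal D,h)}(p,q)+\epsilon$. Letting $\epsilon\to 0$ yields the desired inequality. Taking the supremum over $p,q\in M$ gives $\diam(M,\mathcal D,g)\leq\diam(M,\mathcal D,h)$.

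Unlike the Riemannian case, I would avoid appealing to the existence of a length-minimizing horizontal curve between two points (such curves need not exist in general sub-Riemannian geometry, even in the compact setting), and instead rely on the infimum-of-lengths definition via the $\epsilon$-approximation above. The only conceptual subtlety is handling the possibility $\dist_{(M,\mathcal D,\cdot)}\equiv\infty$ between two points not connected by any horizontal curve; but as noted, the inequality is vacuous in that case since it bounds the smaller quantity by the larger. No obstacle is expected beyond this bookkeeping.
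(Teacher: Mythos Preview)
Your proof is correct and follows exactly the approach the paper intends: the paper does not give a separate proof but simply states that the lemma ``follows similarly as Lemma~\ref{lem2:diam-monotonicity}.'' Your use of an $\epsilon$-approximating horizontal curve rather than a length-minimizer is a sensible refinement, since existence of sub-Riemannian minimizers is more delicate than in the Riemannian case, but the overall strategy is the same.
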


\begin{lemma}\label{lem2:sub-diam-restriccionRiemanniano}
Let $\mathcal D$ be a subbundle on $M$. 
If $g$ is a Riemannian metric on $M$, then the sub-Riemannian metric $h$ on $(M,\mathcal D)$ given by the restriction of $g$ on $\mathcal D$ (i.e.\ $h_p=g_p|_{\mathcal D_p}$ for all $p\in M$) satisfies 
\begin{equation*}
\diam(M,g)\leq \diam(M,\mathcal D,h). 
\end{equation*}
\end{lemma}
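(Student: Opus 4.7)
The plan is almost immediate from definitions. Fix two points $p,q\in M$ and compare the two distances. If no horizontal curve connects $p$ to $q$, then $\dist_{(M,\mathcal D,h)}(p,q)=\infty$ by convention (as infimum over an empty set), so the inequality $\dist_{(M,g)}(p,q)\leq \dist_{(M,\mathcal D,h)}(p,q)$ is trivial. Otherwise, every horizontal curve $\gamma:[a,b]\to M$ is in particular a smooth curve in $M$, hence admissible for computing the Riemannian distance; and since $\gamma'(t)\in\mathcal D_{\gamma(t)}$ for all $t$, the hypothesis $h_p=g_p|_{\mathcal D_p}$ gives
\begin{equation*}
\op{length}_{(M,\mathcal D,h)}(\gamma)=\int_a^b h_{\gamma(t)}(\gamma'(t),\gamma'(t))^{1/2}\,dt=\int_a^b g_{\gamma(t)}(\gamma'(t),\gamma'(t))^{1/2}\,dt = \op{length}_{(M,g)}(\gamma).
\end{equation*}

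Therefore the infimum defining $\dist_{(M,g)}(p,q)$ is taken over a class of curves that contains all horizontal curves (with identical length values), whereas $\dist_{(M,\mathcal D,h)}(p,q)$ is the infimum over only the horizontal ones. Since enlarging the class of admissible curves can only decrease the infimum, we obtain $\dist_{(M,g)}(p,q)\leq \dist_{(M,\mathcal D,h)}(p,q)$ in this case as well. Taking the supremum over all pairs $(p,q)\in M\times M$ on both sides yields $\diam(M,g)\leq \diam(M,\mathcal D,h)$.

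No real obstacle arises; the proof amounts to recognizing that horizontality restricts the set of admissible curves without altering their length, so every quantity of the form ``inf of length'' can only increase when passing from the Riemannian to the sub-Riemannian setting. The only point to mention explicitly is the convention that the infimum over the empty set is $+\infty$, which handles the case when $\mathcal D$ is not bracket-generating and some pairs of points are not horizontally connected.
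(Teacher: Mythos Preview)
Your proof is correct and is essentially the same argument the paper gives: the paper simply remarks that the lemma ``follows immediately by noting that the Riemannian geodesic connecting two given points in $M$ may not be horizontal,'' which is precisely your observation that the class of horizontal curves is contained in the class of all smooth curves, with identical lengths on the former. Your version is just a more detailed write-up of this one-line justification.
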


Lemma~\ref{lem2:sub-diam-restriccionRiemanniano} follows immediately by noting that the Riemannian geodesic connecting two given points in $M$ may not be horizontal.

We say that a subbundle $\mathcal D$ satisfies the \emph{bracket-generating condition} (also known as the \emph{Hörmander condition}) if the Lie algebra generated by vector fields in $\mathcal D$ spans at every point the tangent space of $M$. 
For such a $\mathcal D$, provided $M$ is compact, the Chow--Rashevskii Theorem ensures that $\diam (M,\mathcal D,h)<\infty$.
In particular, any two points in $M$ can be joined by a horizontal curve.

In what follows we will consider a very particular kind of sub-Riemannian manifolds, namely, a compact Lie group $G$ endowed with a \emph{left-invariant} sub-Riemannian structure.
Given $\HH$ a subspace of $\fg$ and $b(\cdot,\cdot)$ an inner product on $\HH$, we associate the left-invariant sub-Riemannian structure $(\mathcal D,g)$ given by 
\begin{align}\label{eq2:left-inv-sub}
\mathcal D&=\bigcup_{a\in G}dL_a(\HH), &
	g_a\big(dL_a(X),dL_a(Y)\big) &= b({X},{Y}) ,
\end{align}
for all $X,Y\in\HH$ and $a\in G$. 
Here, $L_a:G\to G$ is given by $L_a(x)=ax$ and $\HH$ is seen as a subspace of $T_eG\equiv \fg$. 
We will denote this sub-Riemannian manifold by $(G,\HH,g)$ and, as in the Riemannian case, $g$ will be identified with the inner product $g_e=b$ on $\HH$.

\begin{definition}\label{def2:bracket-generating}
A subset $S$ of $\mathfrak g$ is called \emph{bracket generating} if the Lie algebra generated by $S$ is equal to $\mathfrak g$.
Equivalently, the only subalgebra of $\mathfrak g$ containing $S$ is $\mathfrak g$.
\end{definition}

Of course, a bracket-generating subspace $\HH$ of $\fg$ induces a left-invariant subbundle of $TG$ satisfying the bracket-generating condition. 
The next theorem follows immediately from the Chow--Rashevskii Theorem. 
Since we will encounter the situation of the theorem many times in the course of this paper, we state it here.

\begin{theorem}\label{thm2:Chow}
If $\HH$ is a bracket-generating subspace of $\fg$, then $\diam(G,\HH, g)<\infty$ for any inner product $g$ on $\HH$. 
\end{theorem}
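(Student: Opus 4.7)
The plan is to reduce the statement to the Chow--Rashevskii theorem (which, as the author indicates, is essentially all that is needed) by verifying that the left-invariant subbundle $\mathcal D$ associated to $\HH$ via \eqref{eq2:left-inv-sub} satisfies the Hörmander bracket-generating condition on $G$, and then to use compactness of $G$ to upgrade finiteness of all pairwise sub-distances to finiteness of the diameter.

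First I would pick a basis $\{H_1,\dots,H_r\}$ of $\HH$ and extend each $H_i$ to the left-invariant vector field $\widetilde H_i$ on $G$ defined by $\widetilde H_i(a)=dL_a(H_i)$. By construction, $\mathcal D_a=\Span_\R\{\widetilde H_1(a),\dots,\widetilde H_r(a)\}$ for all $a\in G$, so the $\widetilde H_i$ form a global frame for $\mathcal D$. The key point, and the only content of the argument, is then the following translation between the algebraic bracket-generating condition for $\HH\subset\fg$ (Definition~\ref{def2:bracket-generating}) and the geometric one for $\mathcal D\subset TG$: since the Lie bracket of left-invariant vector fields is left-invariant and agrees at $e$ with the Lie bracket of the corresponding elements of $\fg$, an iterated bracket $[\widetilde H_{i_1},[\widetilde H_{i_2},[\dots,\widetilde H_{i_k}]\dots]]$ equals the left-invariant vector field associated to $[H_{i_1},[H_{i_2},[\dots,H_{i_k}]\dots]]\in\fg$. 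As $\HH$ is bracket-generating, such iterated brackets span $\fg\equiv T_eG$, so the sections of $\mathcal D$ generated under the Lie bracket span $T_eG$; left-translating, they span $T_aG$ at every $a\in G$. Thus $\mathcal D$ satisfies the Hörmander condition on $G$.

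With the Hörmander condition in hand, the Chow--Rashevskii theorem guarantees that any two points of $G$ can be joined by a piecewise-smooth horizontal curve, and furthermore that the sub-Riemannian distance $\dist_{(G,\HH,g)}$ is finite, continuous, and induces the original manifold topology on $G$. Since $G$ is compact and $\dist_{(G,\HH,g)}:G\times G\to\R_{\geq 0}$ is continuous, it attains its supremum; therefore $\diam(G,\HH,g)=\sup_{x,y\in G}\dist_{(G,\HH,g)}(x,y)<\infty$. (Alternatively, left-invariance of $(\mathcal D,g)$ reduces this to showing that for a fixed neighborhood $U$ of $e$ one has $\sup_{x\in U}\dist_{(G,\HH,g)}(e,x)<\infty$ and then covering $G$ by finitely many translates of $U$.)

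I do not anticipate a genuine obstacle: the only substantive step is the algebraic-to-geometric translation of the bracket-generating condition in the first paragraph, and that is standard for left-invariant distributions. The rest is an invocation of Chow--Rashevskii together with compactness.
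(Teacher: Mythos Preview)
Your proposal is correct and follows exactly the approach indicated in the paper: the paper does not give a written proof but simply states that the theorem ``follows immediately from the Chow--Rashevskii Theorem,'' having noted just above that a bracket-generating subspace $\HH$ of $\fg$ induces a left-invariant subbundle satisfying the H\"ormander condition, and that Chow--Rashevskii then gives finite diameter on a compact manifold. Your argument spells out precisely these two steps (the algebraic-to-geometric translation via left-invariant vector fields, and the compactness/continuity argument for finiteness of the supremum), so it is the same proof with the details made explicit.
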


For a general treatment of sub-Riemannian geometry we refer the reader to \cite{AgrachevBarilariBoscain-book}, \cite{LeDonne-lecturenotes}, and \cite{Montgomery-tour}. 
A brief account on left-invariant sub-Riemannian structures on compact Lie groups can be found in \cite[\S9]{EldredgeGordinaSaloff18} (see also \cite[Ch.~7]{AgrachevBarilariBoscain-book}). 
In the present article we will only use the few facts just reviewed.

\bigskip

We now introduce the second non-Riemannian structure. 
Given $g=(g_p)_{p\in M}$ such that $g_p$ is a positive semi-definite symmetric bilinear form on $T_pM$ at each point $p\in M$ varying smoothly, $(M,g)$ is called a \emph{singular Riemannian manifold}. 
See \cite{Kupeli} for the general theory on a more general context: \emph{singular pseudo-Riemannian manifolds} (i.e.\ $g_p$ is any symmetric bilinear form on $T_pM$). 
A word of caution: the name `singular Riemannian manifold' has been used sometimes for different objects, for instance, an `almost-Riemannian manifold'.

The corresponding length of a smooth curve $\gamma:[a,b]\to M$ equals $\int_a^b g_{\gamma(t)}({\gamma'(t)},{\gamma'(t)})^{1/2} \,dt$. 
The \emph{singular distance} between two points $p,q\in M$ is defined as the infimum of the lengths over all smooth curves $\gamma$ on $M$ connecting $p$ and $q$. 
The corresponding diameter, $\diam(M,g)$, is given by the supremum of the distances between two points in $M$. 

\begin{remark}\label{rem2:pseudo-distance}
The corresponding singular distance $\dist_{(M,g)}$ of $(M,g)$ is a pseudo-distance in the sense of \cite[Def.~1.1.4]{BuragoBuragoIvanov-book}, that is, it satisfies all the properties of a distance except the requirement that $\dist_{(M,g)}(p,q)=0$ implies $p=q$. 
Moreover, the singular diameter of a non-trivial singular Riemannian manifold might be zero, such as is shown in 
Example~\ref{ex2:diam-singular=0} below. 
By identifying points in $M$ with zero distance in the pseudo-metric space $(M,\dist_{(M,g)})$, we obtain a metric space that we denote by $(M/\dist_{(M,g)} ,\hat \dist_{(M,g)})$ (see for instance \cite[Prop.~1.1.5]{BuragoBuragoIvanov-book}).
\end{remark}

\begin{notation}\label{not2:symmetricbilinearform}
Given $b$ a (real) symmetric bilinear form on $\fg$ and $\fa$ a (real) subspace of $\fg$, let us denote by $b|_{\fa}$ the symmetric bilinear form on $\fa$ given by the restriction of $b$ on $\fa$, that is, $b|_{\fa}(X,Y)= b(X,Y)$ for all $X,Y\in\fa$. 
Furthermore, when $b$ is non-degenerate, let $b|_{\fa}^*$ denote the symmetric bilinear form on $\fg$ given by $b|_{\fa}^*(X_1+X_2,Y_1+Y_2)=b(X_1,Y_1)$ for all $X_1,Y_1\in\fa$ and $X_2,Y_2\in\fa^{\perp_b}:=\{X\in\fa:b(X,Y)=0\text{ for all }Y\in\fa\}$. 
Note that if $b$ is positive definite, then $b|_{\fa}$ is positive definite and $b|_{\fa}^*$ is positive semi-definite. 
\end{notation}

The next results are analogous to Lemmas~\ref{lem2:sub-diam-monotonicity} and \ref{lem2:sub-diam-restriccionRiemanniano} respectively. 

\begin{lemma}\label{lem2:sing-diam-monotonicity}
For singular Riemannian metrics $g$ and $h$ on $M$ satisfying that $g_p(X,X)\leq h_p(X,X)$ for all $X\in T_pM$ and $p\in M$, we have that $\diam(M,g)\leq \diam(M, h)$.
\end{lemma}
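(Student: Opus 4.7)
\textbf{Proof proposal for Lemma~\ref{lem2:sing-diam-monotonicity}.} My plan is to follow the same template as Lemma~\ref{lem2:diam-monotonicity}, but to argue directly from the infimum/supremum definitions of singular distance and singular diameter rather than invoking any length-minimizing curve. This sidesteps the obstacles noted in Remark~\ref{rem2:pseudo-distance}: in the singular setting, minimizers need not exist, the ``distance'' is only a pseudo-distance, and the singular diameter can even vanish while the underlying structure is nontrivial (cf.\ Example~\ref{ex2:diam-singular=0}).

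The key step will be a pointwise comparison of speeds. From the hypothesis $g_p(X,X)\leq h_p(X,X)$ for every $X\in T_pM$ and every $p\in M$, and the fact that both sides are non-negative (because $g$ and $h$ are positive semi-definite), it follows that for any smooth curve $\gamma:[a,b]\to M$ one has
\[
g_{\gamma(t)}(\dot\gamma(t),\dot\gamma(t))^{1/2}\;\leq\; h_{\gamma(t)}(\dot\gamma(t),\dot\gamma(t))^{1/2}\qquad\text{for every }t\in[a,b].
\]
Integrating over $[a,b]$ yields $\op{length}_{(M,g)}(\gamma)\leq \op{length}_{(M,h)}(\gamma)$ for every smooth $\gamma$.

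Next I would fix $p,q\in M$ and take the infimum over all smooth curves joining $p$ to $q$; the pointwise length bound above directly gives $\dist_{(M,g)}(p,q)\leq \dist_{(M,h)}(p,q)$, without any need to produce a minimizer. Finally, taking the supremum over $(p,q)\in M\times M$ yields $\diam(M,g)\leq \diam(M,h)$, as desired. I do not anticipate any difficulty here: unlike in the proof of Lemma~\ref{lem2:diam-monotonicity}, this argument needs neither compactness of $M$ nor the existence of a length-minimizing curve, so the same reasoning applies verbatim in the singular setting.
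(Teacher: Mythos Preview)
Your proposal is correct and follows the same template the paper indicates (it states the lemma without proof, declaring it analogous to Lemmas~\ref{lem2:diam-monotonicity} and \ref{lem2:sub-diam-monotonicity}). Your choice to argue directly via infima and suprema, rather than via diameter-realizing pairs and length-minimizing curves as in the proof of Lemma~\ref{lem2:diam-monotonicity}, is in fact the appropriate adaptation for the singular setting, since minimizers need not exist there; the paper leaves this adjustment implicit.
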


\begin{lemma}\label{lem2:sing-diam-restriccionRiemanniano}
Let $\mathcal D$ be a subbundle on $M$. 
If $g$ is a Riemannian metric on $M$, the singular  Riemannian metric $h$ given by $h_p=g_p|_{\mathcal D_p}^*$ for all $p\in M$ satisfies 
\begin{equation*}
	\diam(M,g)\geq \diam(M,h). 
\end{equation*}
\end{lemma}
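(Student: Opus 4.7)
The plan is to reduce the statement to Lemma~\ref{lem2:sing-diam-monotonicity} by establishing the pointwise comparison $h_p(X,X)\le g_p(X,X)$ for every $X\in T_pM$, and then applying that previous lemma (recalling that a Riemannian metric is a particular case of a singular Riemannian metric).

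First I would fix $p\in M$ and use the $g_p$-orthogonal decomposition $T_pM = \mathcal D_p \oplus \mathcal D_p^{\perp_{g_p}}$. Any $X\in T_pM$ then writes uniquely as $X = X_1 + X_2$ with $X_1\in\mathcal D_p$ and $X_2\in\mathcal D_p^{\perp_{g_p}}$. By the definition of $g_p|_{\mathcal D_p}^*$ in Notation~\ref{not2:symmetricbilinearform}, one has
\begin{equation*}
h_p(X,X) \;=\; g_p|_{\mathcal D_p}^*(X_1+X_2,X_1+X_2) \;=\; g_p(X_1,X_1),
\end{equation*}
while orthogonality gives $g_p(X,X) = g_p(X_1,X_1) + g_p(X_2,X_2)$. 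Since $g_p(X_2,X_2)\ge 0$, the desired pointwise bound $h_p(X,X) \le g_p(X,X)$ follows.

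Second, I would note that the assignment $p \mapsto h_p$ is smooth: the $g$-orthogonal projection onto a smooth subbundle $\mathcal D$ is smooth, so $h$ indeed qualifies as a singular Riemannian metric in the sense of Subsection~\ref{subsec:diam-non-Riemannian}. Viewing the Riemannian metric $g$ as a (non-degenerate) singular Riemannian metric, Lemma~\ref{lem2:sing-diam-monotonicity} then applies with the pair $(h,g)$ and yields $\diam(M,h)\le\diam(M,g)$, completing the proof.

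There is no real obstacle here: the argument is essentially a one-line application of the previous lemma once one unpacks the definition of $g_p|_{\mathcal D_p}^*$. The only subtlety worth mentioning is the smoothness of $h$, which is immediate from the smoothness of the orthogonal projection onto $\mathcal D$.
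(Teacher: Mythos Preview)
Your proof is correct and matches the paper's intended argument: the paper does not spell out a proof but simply remarks that the lemma is analogous to Lemma~\ref{lem2:sub-diam-restriccionRiemanniano}, and your reduction via the pointwise inequality $h_p(X,X)\le g_p(X,X)$ together with Lemma~\ref{lem2:sing-diam-monotonicity} is exactly the natural way to make this precise.
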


We next focus on \emph{left-invariant} singular Riemannian structures on a compact Lie group $G$. 
Let $b$ be a positive semi-definite symmetric bilinear form on $\fg$. 
We associate to $b$ the singular Riemannian metric $g$ on $G$ given by
\begin{align}
g_a\big(dL_a(X),dL_a(Y)\big) &= b(X,Y) ,
\qquad\text{for $X,Y\in T_eG\equiv \fg$ and $a\in G$.}
\end{align}
Similarly as above, we will identify $g$ with the symmetric bilinear form $g_e=b$ on $\fg$.

\begin{remark}\label{rem2:radical}
Given $b$ a non-trivial symmetric bilinear form on $\fg$, any complement $\fa$ in $\fg$ of the radical of $b$, 
$$
\op{rad}(b):=\{X\in \fg: b(X,Y)=0\text{ for all }Y\in\fg\}, 
$$
satisfies that $b|_{\fa}$ is non-degenerate. 
\end{remark}

The next lemma, besides being very useful in the sequel, exemplifies the situation discussed in Remark~\ref{rem2:pseudo-distance}.

\begin{lemma}\label{lem2:diam(G/d)=diam(G/H)}
Let $G$ be a compact Lie group, let $H$ be a closed subgroup of $G$ with Lie algebras $\fg$ and $\fh$ respectively, and let $\fp$ denote the orthogonal complement of $\fh$ in $\fg$ with respect to any $\Ad(G)$-invariant inner product on $\fg$. 
Let $h$ be an $\Ad(H)$-invariant positive semi-definite symmetric bilinear form on $\fg$ with $\op{rad}(h)=\fh$. 
Then, the metric space $(G/\dist_{(G,h)} ,\hat\dist_{(G,h)})$ is isometric (as metric spaces) to the metric space corresponding to the homogeneous Riemannian manifold $(G/H,h|_{\fp})$ (see Remark~\ref{rem2:G/H}).
In particular, 
\begin{equation*}
\diam(G,h) = \diam(G/H,h|_{\fp}). 
\end{equation*}
\end{lemma}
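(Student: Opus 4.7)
The plan is to exhibit a natural isometry from $(G/\dist_{(G,h)}, \hat\dist_{(G,h)})$ to the metric space associated with the Riemannian manifold $(G/H, h|_\fp)$ through the canonical projection $\pi\colon G \to G/H$. The key preliminary observation is that, because $\op{rad}(h) = \fh$ and $\fp$ is the $g_I$-orthogonal complement of $\fh$ in $\fg$, for every $a \in G$ and $v \in T_aG$ with left-invariant decomposition $v = dL_a(X_\fh + X_\fp)$, $X_\fh \in \fh$, $X_\fp \in \fp$, one has $h_a(v, v) = h|_\fp(X_\fp, X_\fp)$. Moreover, $d\pi_a(v) = d\pi_a(dL_a(X_\fp))$, and the very definition of the $G$-invariant Riemannian metric on $G/H$ attached to $h|_\fp$ (see Remark~\ref{rem2:G/H}) makes the restriction of $d\pi_a$ to the horizontal subspace $dL_a(\fp)$ a linear isometry onto $T_{\pi(a)}(G/H)$. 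Consequently, for any smooth curve $\gamma\colon [0,1] \to G$ its projection $\tilde\gamma := \pi \circ \gamma$ satisfies $\op{length}_{(G,h)}(\gamma) = \op{length}_{(G/H, h|_\fp)}(\tilde\gamma)$, and taking the infimum over smooth curves joining $a$ and $b$ yields
\begin{equation*}
\dist_{(G,h)}(a, b) \;\geq\; \dist_{(G/H, h|_\fp)}(\pi(a), \pi(b))
\qquad\text{for every } a, b \in G.
\end{equation*}

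For the reverse inequality I would use horizontal lifting: since $\fp$ is $\Ad(H)$-invariant, the left-invariant subbundle $a \mapsto dL_a(\fp)$ defines an Ehresmann connection on the principal $H$-bundle $\pi\colon G \to G/H$. Given any smooth curve $\tilde\gamma$ from $\pi(a)$ to $\pi(b)$ in $G/H$, its horizontal lift starting at $a$ exists, terminates at some $b' \in bH$, and has the same length as $\tilde\gamma$ by the identity of the first paragraph. Assuming $H$ is connected, a smooth path $c\colon [0,1] \to H$ from $e$ to $b^{-1} b'$ produces the in-coset curve $t \mapsto b\, c(t)$, whose tangent everywhere lies in $dL(\fh) \subseteq dL(\op{rad}(h))$ and therefore has zero $h$-length. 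Concatenating this with the horizontal lift (traversed appropriately) yields a curve from $a$ to $b$ in $G$ of length exactly $\op{length}(\tilde\gamma)$, and taking the infimum over $\tilde\gamma$ gives $\dist_{(G,h)}(a, b) \leq \dist_{(G/H, h|_\fp)}(\pi(a), \pi(b))$.

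Combining the two inequalities, $\dist_{(G,h)}(a, b) = \dist_{(G/H, h|_\fp)}(\pi(a), \pi(b))$, whose vanishing is equivalent to $\pi(a) = \pi(b)$. The induced map $[a] \mapsto \pi(a)\colon G/\dist_{(G,h)} \to G/H$ is therefore a well-defined distance-preserving bijection, hence an isometry of metric spaces, and the diameter equality follows at once. The main obstacle I foresee is the rigorous construction of horizontal lifts; this can be handled by standard Ehresmann connection theory on principal bundles once one verifies that $dL(\fp)$ is transverse to the fibers of $\pi$ and $\Ad(H)$-equivariant under right translation (both immediate from the $\Ad(G)$-invariance of $g_I$ and the $\Ad(H)$-invariance of $\fp$). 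A minor subtlety, should $H$ be disconnected, is that the $\dist_{(G,h)}$-equivalence classes are in fact the left cosets of the identity component $H_0$; the argument then applies verbatim with $H_0$ in place of $H$.
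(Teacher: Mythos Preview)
Your proof is correct and follows exactly the approach the paper intends: the paper in fact leaves the proof to the reader, indicating only that the isometry is $\hat a \mapsto aH$, and your argument supplies precisely the expected details (length preservation under $\pi$ via $\op{rad}(h)=\fh$, and horizontal lifting for the reverse inequality). Your remark on the disconnected case is apt---the lemma as stated tacitly assumes $H$ connected, which is how it is invoked throughout the paper.
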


The proof is left to the reader. 
The isometry is given by the map $\hat a\mapsto aH$, where $\hat a$ denotes the class of $a\in G$ in $G/\dist_{(M,h)}$. 
The last identity follows since $\diam(G,h) =\diam(G,\dist_{(M,h)}) = \diam(G/\dist_{(M,h)},\hat\dist_{(M,h)}) = \diam(G/H,h|_{\fp})$.
It is important to note that if $\gamma:[0,1]\to G$ is a smooth path realizing the distance in $(G,h)$ between $\gamma(0)$ and $\gamma(1)$, the smooth path $\widetilde \gamma:[0,1]\to G/H$ given by $\widetilde \gamma(t)=\gamma(t)H$ is not necessarily a geodesic in $(G/H, h|_{\fp})$ since $\widetilde \gamma'$ may vanish in some open interval of $[0,1]$.

\begin{example}\label{ex2:diam-singular=0}
Let $h$ be a (non-trivial) positive semi-definite symmetric bilinear form on $\fg$ such that $\op{rad}(h)$ is bracket generating. 
Given any two points $a,b\in G$, the Chow--Rashevskii Theorem ensures that there is a smooth curve $\gamma$ connected them with $\gamma'(t)\in \op{rad}(h)$ for all $t$.
It follows that the singular distance between $a$ and $b$ is zero since $h(\gamma'(t),\gamma'(t))=0$ for all $t$. 
Hence $\diam(G,h)=0$. 
\end{example}

We conclude from Example~\ref{ex2:diam-singular=0} that a necessary condition to ensure $\diam(G,h)>0$ is that $\op{rad}(h)$ cannot be bracket generating. 
This condition is not sufficient.
For instance, if $G$ is an $m$-dimensional flat torus $T^m$,  $X_1\in\fg $ is chosen so that $H:=\{\exp(tX_1):t\in\R\}$ is dense in $G$, and $\op{rad}(h)=\Span_\R\{X_1\}$ (i.e.\ $h$ is non-degenerate in some complement of $\R X_1$ in $\fg$ and $h(X_1,\fg)=0$), then $\diam(G,h)=0$.
This follows form the fact that any two points in the dense subset $H$ have distance zero. 
However, the next result tells us that a slightly stronger condition works.

\begin{proposition}\label{prop2:diam-pseudo-Hclosed}
Let $(G,h)$ be a left-invariant singular Riemannian manifold induced by a positive semi-definite symmetric bilinear form $h$ on $\fg$. 
If $\op{rad}(h)$ is contained in a proper Lie subalgebra $\mathfrak h$ of $\mathfrak g$ whose associated connected subgroup $H$ of $G$ is closed, then $\diam(G,h)>0$. 
\end{proposition}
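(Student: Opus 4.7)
The plan is to dominate $h$ from below by an $\Ad(H)$-invariant positive semi-definite form $\tilde h$ whose radical is exactly $\fh$, and then apply Lemma~\ref{lem2:sing-diam-monotonicity} and Lemma~\ref{lem2:diam(G/d)=diam(G/H)} to reduce to the positivity of the diameter of $G/H$ in an honest Riemannian metric. Fix an $\Ad(G)$-invariant inner product $g_0$ on $\fg$, set $\fp:=\fh^{\perp_{g_0}}$, and let $\pi_\fp\colon\fg\to\fp$ denote the $g_0$-orthogonal projection. The key estimate I would establish is that there exists $c>0$ such that
\begin{equation*}
h(X,X)\,\geq\, c\, g_0\bigl(\pi_\fp(X),\pi_\fp(X)\bigr)\qquad \text{for all }X\in\fg.
\end{equation*}

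To prove this estimate, pick any complement $W$ of $\op{rad}(h)$ inside $\fh$, so that $V:=\fp\oplus W$ is a complement of $\op{rad}(h)$ in $\fg$. Because $V\cap\op{rad}(h)=0$, the restriction $h|_V$ is an actual inner product on $V$, hence $h|_V\geq c\,g_0|_V$ for some $c>0$. For $X\in\fg$, decompose $X=X_V+X_R$ with $X_V\in V$ and $X_R\in\op{rad}(h)$; positive semi-definiteness of $h$ forces $h(X,X)=h(X_V,X_V)\geq c\,g_0(X_V,X_V)$, and since $W\subseteq\fh$ is $g_0$-orthogonal to $\fp$, dropping the $W$-component of $X_V$ yields $g_0(X_V,X_V)\geq g_0(\pi_\fp(X),\pi_\fp(X))$. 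Next, define $\tilde h(X,Y):=c\,g_0(\pi_\fp(X),\pi_\fp(Y))$ on $\fg$; this is positive semi-definite with $\op{rad}(\tilde h)=\fh$, and it is $\Ad(H)$-invariant because $\fp$ is $\Ad(H)$-stable (as the $g_0$-orthogonal complement of the $\Ad(H)$-invariant subalgebra $\fh$). By construction $\tilde h\leq h$ pointwise on $\fg$, so the associated left-invariant singular Riemannian structures satisfy the same inequality.

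By Lemma~\ref{lem2:sing-diam-monotonicity} and Lemma~\ref{lem2:diam(G/d)=diam(G/H)},
\begin{equation*}
\diam(G,h)\,\geq\,\diam(G,\tilde h)\,=\,\diam\bigl(G/H,\,\tilde h|_\fp\bigr)\,=\,\diam\bigl(G/H,\,c\,g_0|_\fp\bigr).
\end{equation*}
Since $\fh$ is a proper subalgebra, the closed connected subgroup $H$ has positive codimension in $G$, so $(G/H,\,c\,g_0|_\fp)$ is a positive-dimensional compact Riemannian manifold; hence its diameter is strictly positive, and so is $\diam(G,h)$. The main obstacle is the key estimate in the first step: one cannot naively majorize $h$ by its ``$\fp$-block'' because of possible mixed terms between $\fp$ and $\fh$. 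The remedy is to enlarge $\fp$ to a complement $V$ of $\op{rad}(h)$ in all of $\fg$ (necessary whenever $\op{rad}(h)\subsetneq\fh$), use that $h|_V$ is a genuine inner product, and exploit the identity $h(X,X)=h(X_V,X_V)$ to discard the radical and then the auxiliary $W$-part.
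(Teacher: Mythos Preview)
Your proof is correct and follows the same strategy as the paper: bound $h$ from below by a constant multiple of $g_0|_{\fp}^*$, then apply Lemma~\ref{lem2:sing-diam-monotonicity} and Lemma~\ref{lem2:diam(G/d)=diam(G/H)} to reduce to the positivity of $\diam(G/H,\,c\,g_0|_\fp)$. Your treatment of the key inequality is in fact more careful than the paper's: the paper only checks $h(X,X)\geq t\,g_0(X,X)$ for $X\in\fp$ and then invokes Lemma~\ref{lem2:sing-diam-monotonicity}, whereas that lemma requires the inequality on all of $\fg$---a step that is not automatic because of possible $h$-cross terms between $\fp$ and $\fh$; your argument via the complement $V=\fp\oplus W$ of $\op{rad}(h)$ and the identity $h(X,X)=h(X_V,X_V)$ is precisely what is needed to close this point.
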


\begin{proof}
By assumption, there is a proper closed subgroup $H$ of $G$ such that its Lie algebra $\fh$ contains $\op{rad}(h)$. 
Let $\fp$ be the orthogonal complement subspace of $\fh$ in $\fg$ with respect to any $\Ad(G)$-invariant inner product $g_0$ on $\fg$. 
There is $t>0$ sufficiently small such that $h(X,X)\geq t\, g_0(X,X)$ for all $X\in \fp$. 
Lemma~\ref{lem2:sing-diam-monotonicity} implies that $\diam(G,h)\geq \diam(G,t\,g_0|_{\fp}^*)$. 
Now, Lemma~\ref{lem2:diam(G/d)=diam(G/H)} yields $\diam(G,t\,g_0|_{\fp}^*)= \diam(G/H,(t\,g_0|_{\fp}^*)|_{\fp} ) = \diam(G/H,t\,g_0|_{\fp})$, which is clearly positive, and the proof is complete. 
\end{proof}

\begin{remark}
The assumption in the previous lemma of the existence of a closed subgroup $H$ with a Lie algebra $\fh$ containing $\op{rad}(h)$ and $\fh\neq\fg$, avoids the case that $\op{rad}(h)$ generates a proper Lie subalgebra of $\mathfrak g$ whose connected subgroup of $G$ is dense in $G$. 
Clearly, when $\mathfrak g$ is non-abelian, this assumption always holds if $\dim \op{rad}(h)=1$ since $\{\exp(t X):t\in\R\}$ is contained always in some maximal torus of $G$ for any $X\in \fg$. 
Moreover, when $G$ is semisimple (i.e.\ $[\fg,\fg]= \fg$), the condition is equivalent to $\op{rad}(h)$ is not bracket generating in $\fg$.
This follows from the fact that a semisimple compact Lie group does not have dense proper subgroups (see for instance \cite[Thm.~3.3]{MaciasVirgos}). 
This consequence is stated in the next corollary.
\end{remark}

\begin{corollary} \label{cor2:diam-pseudo-Hclosed-Gss}
Let $G$ be a compact connected semisimple Lie group and let $h$ be a positive semi-definite symmetric bilinear form on $\fg$. 
If $\op{rad}(h)$ is not bracket generating, then $\diam(G,h)>0$. 
\end{corollary}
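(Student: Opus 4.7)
The plan is to reduce the statement to Proposition~\ref{prop2:diam-pseudo-Hclosed} by producing a proper closed connected Lie subgroup of $G$ whose Lie algebra contains $\op{rad}(h)$. The semisimplicity hypothesis will enter only at one (crucial) point, via the fact, cited in the preceding remark, that a compact connected semisimple Lie group admits no dense proper subgroup.

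First I would set $\fh$ to be the Lie subalgebra of $\fg$ generated by $\op{rad}(h)$; by the hypothesis that $\op{rad}(h)$ is not bracket generating, $\fh$ is a proper subalgebra of $\fg$. Let $H$ denote the (not necessarily closed) connected Lie subgroup of $G$ with Lie algebra $\fh$. Since $\dim H=\dim \fh<\dim G$, the subgroup $H$ is a proper subgroup of $G$. I would then pass to the topological closure $\overline{H}\subseteq G$: it is a closed subgroup of $G$, and since the closure of a connected subset of a topological group is connected, $\overline{H}$ is also connected. In particular, $\overline{H}$ is a closed connected Lie subgroup of $G$, and its Lie algebra $\fh'$ contains $\fh$, hence contains $\op{rad}(h)$.

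The decisive step is to invoke \cite[Thm.~3.3]{MaciasVirgos}: since $G$ is compact and semisimple and $H$ is a proper subgroup, $H$ cannot be dense in $G$, so $\overline{H}\neq G$ and consequently $\fh'$ is a proper subalgebra of $\fg$. As $\overline{H}$ is closed, connected, and has Lie algebra $\fh'$, it coincides with the connected subgroup of $G$ associated to $\fh'$. The hypotheses of Proposition~\ref{prop2:diam-pseudo-Hclosed} are thus satisfied with $(\fh',\overline{H})$ in place of $(\fh,H)$, and the conclusion $\diam(G,h)>0$ follows at once. The only genuinely non-trivial input in the argument—and the one that justifies the semisimplicity assumption—is the non-existence of dense proper subgroups in a compact semisimple Lie group; the remaining steps are simply bookkeeping around taking topological closures and tracking Lie algebras.
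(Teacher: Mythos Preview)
Your proof is correct and follows essentially the same approach as the paper: the corollary is stated immediately after a remark that already sketches this argument, namely that the Lie algebra generated by $\op{rad}(h)$ yields a proper analytic subgroup which, by \cite[Thm.~3.3]{MaciasVirgos}, cannot be dense and hence has proper closure, so Proposition~\ref{prop2:diam-pseudo-Hclosed} applies. You have simply written out the details of that remark carefully.
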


\subsection{Spectra of left-invariant non-Riemannian structures}
\label{subsec:spec-non-Riemannian}
Let $\HH$ be a subspace of $\fg$ and $h$ an inner product on it. 
The \emph{sub-Laplace operator} (or \emph{sub-Laplacian}) associated to the sub-Riemannian manifold $(G,\HH,h)$ (introduced in \eqref{eq2:left-inv-sub}) is the (positive semi-definite self-adjoint) differential operator on $C^\infty(G)$ given by 
\begin{equation}\label{eq2:subLaplacian}
\Delta_{(\HH,h)}(f)= -\sum_{j=1}^l Y_j^2\cdot f,
\end{equation}
where $\{Y_1,\dots,Y_l\}$ is any orthonormal basis of $\HH$ with respect to the inner product $h$ and $(X\cdot f)(a)=\left.\tfrac{d}{dt}\right|_{t=0} f(\exp(X)a)$ for all $X\in\fg$, and $a\in G$. 
We set $C_{(\HH,h)}=\sum_{j=1}^l Y_j^2 \in\mathcal U(\fg)$.
For $\pi\in\widehat G$, $v\in V_\pi$, $\varphi\in V_\pi^*$, and $f_{v\otimes\varphi}\in C^\infty(G)$ given as in \eqref{eq2:PeterWeyl-embedding}, one has that 
\begin{equation}\label{eq2:L-f_vxvarphi}
\Delta_{(\HH,h)} \cdot f_{v\otimes \varphi} = f_{(-\pi(C_{(\HH,h)}) v)\otimes \varphi}. 
\end{equation}
By proceeding in the same way as for \eqref{eq2:lambda1(G,g_A)}, one gets that the second (possible zero) eigenvalue of $\Delta_{(\HH,h)}$ is given by 
\begin{equation}\label{eq2:lambda_1(G,H,g_I|H)}
\lambda_1(G,\HH,h) = \min \left\{ \lambda_{\min}(\pi(-C_{(\HH,h)})): \pi\in\widehat G,\, \pi\not\simeq 1_G \right\}. 
\end{equation}
By Hörmander's theorem (\cite{Hormander67}), $\Delta_{(\HH,h)}$ is hypoelliptic when $\HH$ is bracket generating. 
In particular, $\Delta_{(\HH,h)}$ has a discrete spectrum since the inverse operator to $1+\Delta_{(\HH,h)}$ is compact.

Although the next result may be obvious, we include a short and self-contained proof. 

\begin{lemma} \label{lem2:sub-spec-discrete}
If $\HH$ is bracket generating, then the eigenvalue $0$ in the spectrum of $\Delta_{(\HH,h)}$ has multiplicity one, i.e.\ $\lambda_1(G,\HH,h)>0$. 
\end{lemma}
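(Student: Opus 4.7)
The plan is to combine the representation-theoretic formula \eqref{eq2:lambda_1(G,H,g_I|H)} with the bracket-generating hypothesis and the hypoellipticity of $\Delta_{(\HH,h)}$ recalled just before the statement. Fix any non-trivial $\pi\in\widehat G$. Since each $\pi(Y_j)$ is skew-hermitian (as $\pi$ is unitary), $-\pi(Y_j)^2=\pi(Y_j)^*\pi(Y_j)$ is positive semi-definite, and hence so is $\pi(-C_{(\HH,h)}) = \sum_j \pi(Y_j)^*\pi(Y_j)$.

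Step one is to show that in fact $\pi(-C_{(\HH,h)})$ is positive definite. Suppose $v\in V_\pi$ satisfies $\pi(-C_{(\HH,h)})v=0$. Taking the inner product with $v$ gives $\sum_j \|\pi(Y_j)v\|_\pi^2=0$, so $\pi(Y_j)v=0$ for each $j$, and by linearity $\pi(Y)v=0$ for every $Y\in\HH$. Using that $\pi$ is a Lie algebra homomorphism, for any $Y,Z\in\HH$ one has $\pi([Y,Z])v=\pi(Y)\pi(Z)v-\pi(Z)\pi(Y)v=0$, and iterating this argument on brackets of any order shows $\pi(X)v=0$ for every $X$ in the Lie algebra generated by $\HH$. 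By Definition \ref{def2:bracket-generating} this Lie algebra is all of $\fg$, so $\pi(X)v=0$ for every $X\in\fg$. Since $G$ is connected and generated by $\exp(\fg)$, this gives $\pi(a)v=v$ for all $a\in G$, so $\Span_\C\{v\}$ is a $G$-invariant subspace; irreducibility then forces $\dim V_\pi=1$ and $\pi\simeq 1_G$, contradicting the assumption. Consequently $\lambda_{\min}(\pi(-C_{(\HH,h)}))>0$ for every non-trivial $\pi\in\widehat G$.

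Step two is to upgrade this pointwise positivity into a strictly positive infimum. By Hörmander's theorem, as noted just above the lemma, the bracket-generating condition makes $\Delta_{(\HH,h)}$ hypoelliptic with discrete $L^2$-spectrum accumulating only at $+\infty$. Via the Peter--Weyl decomposition \eqref{eq2:PeterWeyl} and \eqref{eq2:L-f_vxvarphi}, that spectrum is the union over $\pi\in\widehat G$ of the eigenvalues of $\pi(-C_{(\HH,h)})$; the trivial representation contributes only the eigenvalue $0$, while Step one shows every other $\pi$ contributes strictly positive eigenvalues. Since the whole spectrum is discrete, $0$ must be an isolated point of it, and the next element---which is exactly $\lambda_1(G,\HH,h)$ by \eqref{eq2:lambda_1(G,H,g_I|H)}---is therefore strictly positive.

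The only step with any content is Step one, and its heart is the passage from $\pi(\HH)v=0$ to $\pi(\fg)v=0$ via iterated brackets; the rest is bookkeeping. Step two is essentially a citation of hypoellipticity, which the paper has already invoked, so no separate work is required there.
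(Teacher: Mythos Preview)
Your proof is correct and follows essentially the same route as the paper: both arguments show that for each non-trivial $\pi\in\widehat G$ the operator $\pi(-C_{(\HH,h)})$ is positive definite by the bracket-iteration argument (if $\pi(Y_j)v=0$ for all $j$, then $\pi([Y_i,Y_j])v=0$, etc., forcing $\pi(\fg)v=0$ and hence $v=0$ by irreducibility), and then invoke the discreteness of the spectrum coming from H\"ormander's theorem. Your Step two makes the passage from ``multiplicity one at $0$'' to ``$\lambda_1>0$'' slightly more explicit than the paper does, but this is the same idea.
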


\begin{proof}
	From \eqref{eq2:L-f_vxvarphi}, it follows that the multiplicity of $\lambda\geq0$ in the spectrum of $\Delta_{(\HH,h)}$ is 
	$$
	\sum_{\pi\in\widehat G} \dim V_\pi \; \dim \{v\in V_\pi: \pi(-C_{(\HH,h)})v=\lambda v\}.
	$$
	Clearly, the trivial representation $1_G$ of $G$ contributes to the spectrum of $\Delta_{(\HH,h)}$ with the eigenvalue $0$ exactly once. 
	Thus, the assertion is equivalent to show that  $\lambda_{\min}(\pi(-C_{(\HH,h)}))>0$ for every $\pi\in\widehat G \smallsetminus\{1_G\}$.
	
	We fix $\pi_0 \in \widehat G\smallsetminus\{1_G\}$ and suppose that $v_0\in V_{\pi_0}$ satisfies $\pi_0(-C_{(\HH,h)}) v_0=0$. 
	Since $-\pi_0(Y_j)^2\geq0$, we obtain that 
	$
	\pi_0(Y_j) \, v_0 =0
	$ 
	for all $1\leq j\leq k$.
	It follows that 
	$$
	\pi_0([Y_i,Y_j]) \, v_0
	=\big( \pi_0(Y_i) \pi_0(Y_j)-\pi_0(Y_i) \pi_0(Y_j) \big)\,v_0=0
	$$
	for all $1\leq i<j\leq k$. 
	Proceeding in this way, we obtain that $\pi_0(Y)v_0=0$ for all $Y$ in the Lie algebra generated by $\{Y_1,\dots,Y_k\}$, which is $\mathfrak g$ since $\HH$ is bracket generating by assumption.  
	This yields that $v_0=0$ and completes the proof. 
\end{proof}

\begin{remark}\label{rem2:lambda1(G,H,h)=0}
	If $\HH$ is contained in the Lie algebra $\fh$ of a closed connected subgroup $H$ of $G$ (in particular $\HH$ is not bracket generating), then $\lambda_1(G,\HH,h)=0$. 
	In fact, $\Delta_{(\HH,h)}\cdot f=0$ for all $H$-invariant $f\in C^\infty(G)$.
	The subspace of these functions is far from being empty because $L^2(G/H)=\bigoplus_{\pi\in \widehat G_H}( \dim V_\pi^H)\, V_\pi$.
\end{remark}

\section{Diameter estimates} \label{sec:diam}
We assume throughout the section that $G$ is a compact connected Lie group with Lie algebra $\mathfrak g$ of dimension $m$. 
Furthermore, we fix an $\Ad(G)$-invariant inner product $g_I$ on $\fg$ and an orthonormal basis $\mathcal B=\{X_1,\dots,X_m\}$. 
In Subsection~\ref{subsec:left-invmetrics}, we associated to $A\in\GL(m,\R)$ a left-invariant metric $g_A$ on $G$.
We deal in this section with estimates for the diameter of $(G,g_A)$ in terms of the eigenvalues of $AA^t$. 
The information in Subsection~\ref{subsec:diam-non-Riemannian} is very important in this section.

\subsection{Simple estimates for the diameter}

To motivate the diameter estimates of this section, we begin by discussing the simple estimates
\begin{equation}\label{eq3:diam-simple-estimates}
\frac{\diam(G,g_I)}{\sigma_1(A)} 
\leq \diam(G,g_A) \leq 
\frac{\diam(G,g_I)}{\sigma_m(A)}
\qquad\text{for any $A\in\GL(m,\R)$. }
\end{equation}
We recall from Notation~\ref{not2:sigma_j(A)} that $\sigma_1(A)$ and $\sigma_m(A)$ denote the largest and smallest eigenvalue of $AA^t$ respectively.  
This estimate will follow from the next result. 

\begin{lemma}\label{lem3:diam-inequality}
	Let $A,B\in \GL(m,\R)$ satisfying that $AA^t\leq BB^t$. 
	Then
	$$
	\diam(G,g_A)\geq \diam(G,g_B).
	$$
\end{lemma}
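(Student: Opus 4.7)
The plan is to chain together two results already established in the preliminaries: the pointwise metric comparison coming from $AA^t \leq BB^t$, and the monotonicity of the Riemannian diameter under pointwise comparison of metrics.

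First, I would invoke Lemma~\ref{lem2:A^tAleqB^tB} directly. The hypothesis $AA^t \leq BB^t$ gives, at the level of inner products on $\fg$, the inequality $g_A(X,X) \geq g_B(X,X)$ for every $X \in \fg$. This handles the comparison at the identity $e \in G$, where $T_eG$ is identified with $\fg$.

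Next I would propagate this inequality to every tangent space $T_aG$ using left-invariance. By construction, both $g_A$ and $g_B$ are left-invariant metrics, so for any $a \in G$ and any $v \in T_aG$, writing $v = dL_a(X)$ with $X \in \fg$, we have $(g_A)_a(v,v) = g_A(X,X) \geq g_B(X,X) = (g_B)_a(v,v)$. Thus the pointwise comparison $(g_B)_a \leq (g_A)_a$ holds on all of $G$.

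Finally, applying Lemma~\ref{lem2:diam-monotonicity} with the roles $g = g_B$ and $h = g_A$ yields $\diam(G, g_B) \leq \diam(G, g_A)$, as desired. There is no real obstacle here: the only mildly nontrivial step is unpacking that Lemma~\ref{lem2:A^tAleqB^tB} is stated in terms of an arbitrary $X \in \fg$ (i.e.\ only at $e$), and noting that left-invariance transports it verbatim to every other point. Since $G$ is compact and connected, both diameters are finite and the inequality has the claimed content.
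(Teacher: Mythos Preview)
Your argument is correct and matches the paper's proof exactly: invoke Lemma~\ref{lem2:A^tAleqB^tB} to get $g_A(X,X)\geq g_B(X,X)$ on $\fg$, then apply Lemma~\ref{lem2:diam-monotonicity}. You have simply spelled out the left-invariance step that the paper leaves implicit.
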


\begin{proof}
By Lemma~\ref{lem2:A^tAleqB^tB}, $AA^t\leq BB^t$ forces to  
$
g_A(X,X)\geq g_B(X,X)
$ 
for all $X\in\fg$. 
The proof follows by Lemma~\ref{lem2:diam-monotonicity}. 
\end{proof}

We now prove \eqref{eq3:diam-simple-estimates}.
Let $P$ be any matrix in $\Ot(m)$ sorting $A$ (see Notation~\ref{not2:sigma_j(A)}). 
Then
\begin{equation}\label{eq2:A^tAprimero}
AA^t 
= P \begin{pmatrix} \sigma_1(A)^2\\ &\ddots \\ && \sigma_m(A)^2 \end{pmatrix}P^t
\geq 
P \begin{pmatrix} \sigma_m(A)^2\\ &\ddots \\ && \sigma_m(A)^2 \end{pmatrix}P^t = 
\sigma_m(A)^2\,  I. 
\end{equation}
Lemma~\ref{lem3:diam-inequality} now yields 
$
\diam(G,g_{A}) \leq \diam (G,g_{\sigma_m(A)I}), 
$
and consequently the right-hand side of \eqref{eq3:diam-simple-estimates} follows since $\diam(G,g_{tB}) = \diam(G,t^{-2}g_{B}) = t^{-1}\diam(G,g_B)$ for all $t>0$ and $B\in\GL(m,\R)$. 
The other estimate follows analogously by using $AA^t\leq \sigma_1(A)^2 \, I$.

\subsection{Main tool for the diameter}

Proposition~\ref{prop3:diam-k} below will be the main tool in the rest of the section and it is based on ideas from the proof of \cite[Lem.~7.1]{EldredgeGordinaSaloff18}. 
We require some notation to state it.

\begin{notation}\label{not2:HH_P^kCC_P^k}
	For $P\in\Ot(m)$, one has that $\{X_1(P),\dots,X_m(P)\}$ is an orthonormal basis of $\fg$ with respect to $g_I$ (see Remark~\ref{rem2:easy-consequences}(iii)). 
	For any index $1\leq k\leq m$, we set 
	\begin{equation}
	\HH_{P,k} = \Span_\R\{X_1(P),\dots, X_k(P)\}
	\quad\text{and}\quad
	\CC_{P,k} = \Span_\R\{X_{k}(P),\dots, X_m(P)\}. 
	\end{equation}
\end{notation}

We recall from Subsection~\ref{subsec:diam-non-Riemannian} that the inner product $g_I|_{\HH_{P,k}}$ on $\HH_{P,k}$ has orthonormal basis $\{X_1(P),\dots, X_k(P)\}$ and induces the sub-Riemannian manifold $(G,\HH_{P,k},g_I|_{\HH_{P,k}})$. 
Analogously, $g_I|_{\CC_{P,k}}^* $ denotes the positive semi-definite symmetric bilinear form on $\fg$ determined by $g_I|_{\CC_{P,k}}^*(X_i(P),X_j(P))=1$ for $i=j\geq k$ and zero otherwise, which induces the singular Riemannian manifold $(G,g_I|_{\CC_{P,k}}^*)$.

\begin{proposition}\label{prop3:diam-k}
	Let $A\in \GL(m,\R)$.
	For any $P$ sorting $A$ and any index $1\leq k\leq m$, we have that
\begin{equation}\label{eq3:diamA}
	\frac{ \diam(G,g_I|_{\CC_{P,k}}^* )} {\sigma_k(A)} 
	\leq \diam(G,g_A) \leq 
	\frac{\diam(G,\HH_{P,k},g_I|_{\HH_{P,k}} )}{\sigma_k(A)}.
\end{equation}
\end{proposition}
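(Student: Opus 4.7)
The plan is to exploit the fact that once $P$ sorts $A$, the inner product $g_A$ is diagonal in the $g_I$-orthonormal basis $\{X_1(P),\dots,X_m(P)\}$. Indeed, by Lemma~\ref{lem2:g_AP=g_A} we have $g_A=g_{PD(A)}$, and Remark~\ref{rem2:easy-consequences}(iv) yields $X_j(PD(A))=\sigma_j(A)X_j(P)$, so the $g_A$-orthonormal basis is $\{\sigma_j(A)X_j(P)\}_{j=1}^m$. Consequently $g_A(X_i(P),X_j(P))=\sigma_j(A)^{-2}\delta_{ij}$, and for any $X=\sum_j c_jX_j(P)\in\fg$ one has the key identity
\begin{equation*}
g_A(X,X)=\sum_{j=1}^m c_j^2\,\sigma_j(A)^{-2}.
\end{equation*}

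For the upper bound, I would restrict attention to $X\in \HH_{P,k}$, where only $c_1,\dots,c_k$ can be nonzero. Since $\sigma_j(A)^{-2}\leq \sigma_k(A)^{-2}$ for $j\leq k$, the above identity gives $g_A|_{\HH_{P,k}}(X,X)\leq \sigma_k(A)^{-2}\,g_I|_{\HH_{P,k}}(X,X)$. Applying Lemma~\ref{lem2:sub-diam-restriccionRiemanniano} to bridge the Riemannian and sub-Riemannian diameters yields $\diam(G,g_A)\leq \diam(G,\HH_{P,k},g_A|_{\HH_{P,k}})$. The sub-Riemannian monotonicity of Lemma~\ref{lem2:sub-diam-monotonicity}, together with the homothety rule $\diam(G,\HH_{P,k},c^2\,g_I|_{\HH_{P,k}})=c\,\diam(G,\HH_{P,k},g_I|_{\HH_{P,k}})$ (applied with $c=\sigma_k(A)^{-1}$), then gives the desired upper bound.

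For the lower bound, I would use the complementary inequality $\sigma_j(A)^{-2}\geq\sigma_k(A)^{-2}$ for $j\geq k$. For an arbitrary $X=\sum_j c_j X_j(P)\in\fg$, dropping the first $k-1$ nonnegative terms gives
\begin{equation*}
g_A(X,X)\geq \sum_{j=k}^m c_j^2\,\sigma_j(A)^{-2}\geq \sigma_k(A)^{-2}\sum_{j=k}^m c_j^2 = \sigma_k(A)^{-2}\,g_I|_{\CC_{P,k}}^*(X,X).
\end{equation*}
Viewing $g_A$ as a singular Riemannian metric, Lemma~\ref{lem2:sing-diam-monotonicity} and the same homothety scaling then produce $\diam(G,g_A)\geq \sigma_k(A)^{-1}\,\diam(G,g_I|_{\CC_{P,k}}^*)$.

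I do not anticipate a genuine obstacle: the whole argument reduces to the diagonal form of $g_A$ in the basis adapted to the sorting $P$, combined with the non-Riemannian monotonicity lemmas of Section~\ref{sec:preliminaries}. The one point requiring mild care is pairing each inequality with the correct comparison lemma, namely Lemma~\ref{lem2:sub-diam-restriccionRiemanniano} on the sub-Riemannian side (where a Riemannian-to-sub-Riemannian bridge is needed), versus Lemma~\ref{lem2:sing-diam-monotonicity} on the singular side (where no such bridge is required because any Riemannian metric is automatically singular Riemannian).
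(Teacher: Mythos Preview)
Your proof is correct and follows essentially the same strategy as the paper: diagonalize $g_A$ via the sorting $P$, compare each $\sigma_j(A)^{-2}$ against $\sigma_k(A)^{-2}$, and invoke the sub-Riemannian and singular-Riemannian comparison lemmas from Section~\ref{sec:preliminaries}. The only difference is organizational: the paper first introduces auxiliary \emph{Riemannian} metrics $g_{B_1},g_{B_2}$ (obtained by freezing the last $m-k+1$, respectively first $k$, diagonal entries at $\sigma_k$), applies the Riemannian monotonicity Lemma~\ref{lem3:diam-inequality}, and only then passes to the sub-Riemannian/singular side via Lemmas~\ref{lem2:sub-diam-restriccionRiemanniano} and~\ref{lem2:sing-diam-restriccionRiemanniano}. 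You instead compare $g_A$ directly with the non-Riemannian structures and use Lemmas~\ref{lem2:sub-diam-monotonicity} and~\ref{lem2:sing-diam-monotonicity}, which bypasses the auxiliary matrices and Lemma~\ref{lem3:diam-inequality} altogether. This is a mild streamlining rather than a different idea.
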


\begin{proof}
	We fix $P$ in $\Ot(m)$ sorting $A$. 
	We abbreviate $\sigma_j=\sigma_j(A)$ for all $j$. 
	Similarly as in \eqref{eq2:A^tAprimero}, we have that 
	\begin{align*}
		AA^t
		&\leq P\diag(\sigma_1^2,\dots,\sigma_{k-1}^2,\sigma_{k}^2, \dots,\sigma_k^2)P^t
		=\sigma_k^2\; P\diag((\tfrac{\sigma_{1}}{\sigma_k})^2, \dots,(\tfrac{\sigma_{k-1}}{\sigma_k})^2,1,\dots,1) P^t.
	\end{align*}
	We set $B_1=P\diag(\tfrac{\sigma_{1}}{\sigma_k}, \dots,\tfrac{\sigma_{k-1}}{\sigma_k},1,\dots,1)$.
	Since $AA^t\leq \sigma_k^2 \, B_1B_1^t$, Lemma~\ref{lem3:diam-inequality} implies that 
	$$
	\diag(G,g_A)
	\geq \diag(G,g_{\sigma_k B_1}) 
	= \sigma_k^{-1}\diag(G,g_{B_1}).
	$$ 
Hence, the inequality at the left-hand side in \eqref{eq3:diamA} follows since $\diag(G,g_{B_1})\geq \diam(G,g_I|_{\CC_{P,k}}^* )$ by Lemma~\ref{lem2:sing-diam-restriccionRiemanniano}. 

We now establish the inequality at the right in \eqref{eq3:diamA}. 
Similarly as above, by setting $B_2=P\diag(1,\dots,1,\tfrac{\sigma_{k+1}}{\sigma_k}, \dots,\tfrac{\sigma_{m}}{\sigma_k})$, one has $AA^t\geq \sigma_k^2\, BB^t$, thus $\diam(G,g_A)\leq \sigma_k^{-1}\diam(G,g_{B_2})$.
The assertion follows since $\diam(G,g_{B_2})\leq \diam(G,\HH_{P,k},g_I|_{\HH_{P,k}} )$ by Lemma~\ref{lem2:sub-diam-restriccionRiemanniano}. 
\end{proof}

\begin{remark}
	Some words of caution about \eqref{eq3:diamA} are necessary at this point. 
	Unlike in \eqref{eq3:diam-simple-estimates}, the coefficients in the extremes depend on $A$ (more precisely on $P$). 
	Moreover, the inequality at the left (resp.\ right) hand side is useless when $\diam(G,g_I|_{\CC_{P,k}}^*)=0$ (resp.\ $\diam(G,\HH_{P,k}, g_I|_{\HH_{P,k}} )=\infty $).
\end{remark}

\subsection{Optimal indices for the diameter} \label{subsec:diam-optimal}

It is desirable to improve the estimates in \eqref{eq3:diam-simple-estimates} by replacing $\sigma_m(A)$ (resp.\ $\sigma_1(A)$) at the right-hand (resp.\ left-hand) side by $\sigma_k(A)$ with $k$ as small (resp.\ large) as possible (see Notation~\ref{not2:sigma_j(A)} for the definition of $\sigma_k(A)$).

We first study the left-hand side case. 
In what follows, for any subset $S$ of $\fg$, we set $S^\perp = \{X\in\fg: g_I(X,Y)=0\text{ for all }Y\in S\}$.

\begin{theorem}\label{thm3:optimal-index-lower-bound}
Assume that $G$ is non-abelian. 
There is a positive real number $C_1$ depending on $G$ and $g_I$ such that 
\begin{equation}\label{eq3:optimallower}
\frac{C_1}{\sigma_{2}(A)}\leq \diam(G,g_A)
\qquad\text{for all $A\in\GL(m,\R)$. }
\end{equation}
Furthermore, 
\begin{equation}
\label{eq3:diam-optimal-non-true-lowerbound}
\inf_{A\in\GL(m,\R) }\diam(G,g_A) \;\sigma_{4}(A)=0.
\end{equation}
Consequently, the largest index $k$ satisfying that $\inf_{A\in\GL(m,\R) }\diam(G,g_A)\, \sigma_{k}(A)>0$ is $2$ or $3$. 
\end{theorem}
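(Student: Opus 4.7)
\emph{Part 1 (lower bound $\diam(G,g_A)\ge C_1/\sigma_2(A)$).} I apply Proposition~\ref{prop3:diam-k} at $k=2$: for any $P\in\Ot(m)$ sorting $A$,
\[
\diam(G,g_A)\,\sigma_2(A)\;\ge\;\diam(G,g_I|_{\CC_{P,2}}^*).
\]
The radical of the singular form on the right is the line $\CC_{P,2}^\perp=\R X_1(P)$, so $\{\exp(tX_1(P)):t\in\R\}$ lies in some maximal torus $T$ of $G$; since $G$ is non-abelian, $T$ is a proper closed subgroup. I then show $\diam(G,g_I|_{\CC_{P,2}}^*)\ge \diam(G/T,g_I|_{\ft^\perp})$ by a Riemannian-submersion-style estimate. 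Decomposing $V\in\fg$ as $V_\ft+V_{\ft^\perp}$, writing $V_\ft=\alpha X_1(P)+V_0$ with $V_0\in\ft\cap\CC_{P,2}$, and using $\ft^\perp\subset\CC_{P,2}$,
\[
g_I|_{\CC_{P,2}}^*(V,V)\;=\;g_I(V_0,V_0)+g_I(V_{\ft^\perp},V_{\ft^\perp})\;\ge\;g_I(V_{\ft^\perp},V_{\ft^\perp}),
\]
so the length of any smooth path in $(G,g_I|_{\CC_{P,2}}^*)$ dominates that of its projection to $(G/T,g_I|_{\ft^\perp})$. Since all maximal tori of $G$ are $\Ad(G)$-conjugate and $g_I$ is $\Ad(G)$-invariant, $\diam(G/T,g_I|_{\ft^\perp})$ is a positive constant depending only on $G$ and $g_I$, which I take as $C_1$.

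\emph{Part 2 ($\inf_A\diam(G,g_A)\sigma_4(A)=0$).} I exhibit a sequence $A_n$ with $\diam(G,g_{A_n})\sigma_4(A_n)\to 0$. The key input is a $3$-dimensional bracket-generating subspace $\HH\subset\fg$. Fix an orthonormal basis $\{Y_1,Y_2,Y_3\}$ of $\HH$ with respect to $g_I$, extend it to an orthonormal basis $\{Y_1,\dots,Y_m\}$ of $\fg$, and let $P\in\Ot(m)$ be the matrix whose $j$-th column is the coordinate vector of $Y_j$ in $\mathcal B$. Set $A_n=PD_n$ with $D_n=\diag(n,n,n,1,\dots,1)$. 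Then $A_nA_n^t=PD_n^2P^t$ has eigenvalues $n^2,n^2,n^2,1,\dots,1$, so $P$ sorts $A_n$ and $\sigma_4(A_n)=1$. Since $\HH_{P,3}=\Span_\R\{Y_1,Y_2,Y_3\}=\HH$ is bracket-generating, Theorem~\ref{thm2:Chow} (Chow--Rashevskii) gives $D_\HH:=\diam(G,\HH,g_I|_\HH)<\infty$. The right inequality of Proposition~\ref{prop3:diam-k} at $k=3$ then yields
\[
\diam(G,g_{A_n})\;\le\;\frac{D_\HH}{n},
\]
so $\diam(G,g_{A_n})\sigma_4(A_n)\le D_\HH/n\to 0$, as required.

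The main obstacle of part 2 is producing the $3$-dimensional bracket-generating subspace $\HH$ in full generality. Writing $\fg=Z(\fg)\oplus[\fg,\fg]$ with $[\fg,\fg]$ semisimple and non-trivial, such an $\HH$ must project surjectively onto $Z(\fg)$ and have $[\fg,\fg]$-component generating the semisimple factor; this is straightforward when $\dim Z(\fg)\le 2$, manageable for $\dim Z(\fg)=3$ by a direct construction choosing generators diagonally across the reductive decomposition, and requires a more delicate hybrid argument in the remaining case, combining the sub-Riemannian bound above with the singular-Riemannian estimate of Proposition~\ref{prop2:diam-pseudo-Hclosed} applied to subgroups dense in proper closed subgroups of $G$. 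This is where I expect the hard work to concentrate.

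\emph{Consequence.} Because $\sigma_k$ is non-increasing in $k$, so is $\diam\,\sigma_k$ in $k$, hence the set of admissible indices $\{k:\inf_A\diam(G,g_A)\sigma_k(A)>0\}$ is an initial segment of $\{1,\dots,m\}$. Part~1 shows $k=2$ is admissible and part~2 shows $k=4$ is not, so the largest admissible $k$ is $2$ or $3$.
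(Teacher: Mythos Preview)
Your Part~1 is correct and follows the same route as the paper: Proposition~\ref{prop3:diam-k} at $k=2$, then the observation that $\R X_1(P)$ sits in a conjugate of a maximal torus $T$, giving $\diam(G,g_I|_{\CC_{P,2}}^*)\ge\diam(G/T,g_I|_{\ft^\perp})=:C_1$.

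Part~2 has a genuine gap. Your plan relies on finding a $3$-dimensional bracket-generating subspace $\HH\subset\fg$, but such a subspace need not exist. Write $\fg=Z(\fg)\oplus\fg_{\mathrm{ss}}$; since every iterated bracket lands in $\fg_{\mathrm{ss}}$, the Lie algebra generated by $\HH$ is contained in $\HH+\fg_{\mathrm{ss}}$, so bracket-generation forces the projection of $\HH$ onto $Z(\fg)$ to be surjective. Hence if $\dim Z(\fg)\ge 4$ there is no $3$-dimensional bracket-generating subspace at all, and the right-hand inequality of Proposition~\ref{prop3:diam-k} (which needs $\diam(G,\HH,g_I|_\HH)<\infty$, i.e.\ Chow--Rashevskii) is unavailable. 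Your ``hybrid'' suggestion invoking Proposition~\ref{prop2:diam-pseudo-Hclosed} does not help here: that proposition gives \emph{positivity} of a singular diameter, not smallness of a Riemannian one.

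The paper sidesteps this obstruction by not insisting on bracket-generation. It takes $Y_1,Y_2\in\fg_{\mathrm{ss}}$ generating $\fg_{\mathrm{ss}}$ (Kuranishi) and $Y_3\in Z(\fg)$ such that $L=\{\exp(tY_3)\}$ is \emph{dense} in $(Z_G)_0$; the span $\Span\{Y_1,Y_2,Y_3\}$ is typically \emph{not} bracket-generating in $\fg$. With $D_s=\diag(s,s,s,1,\dots,1)$ one argues directly on distances: for $a=bz$ with $b\in G_{\mathrm{ss}}$, $z\in(Z_G)_0$, density gives $z_0=\exp(t_0Y_3)$ with $\dist_{g_I}(z,z_0)<\varepsilon/2$, Chow--Rashevskii in $G_{\mathrm{ss}}$ gives a horizontal curve from $e$ to $b$, and the product curve $\eta(t)=\gamma(t)\exp(t\,t_0Y_3)$ stays tangent to $\Span\{Y_1,Y_2,Y_3\}$ and has $g_{PD_s}$-length $O(s^{-2})$. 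This is the missing idea your sketch needs for the case $\dim Z(\fg)\ge 4$.
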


\begin{proof}
Let $T$ be a maximal torus in $G$ with Lie algebra $\ft$. 
We have that $\ft \neq \fg$ by assumption. 
We claim that \eqref{eq3:optimallower} holds by setting $C_1= \diam(G/T, g_I|_{\ft^{\perp}})$. 

We fix $A\in \GL(m,\R)$.
Let $P$ be any matrix in $\Ot(m)$ sorting $A$.
Write $\mathcal C=\mathcal C_{P,2} = \Span_\R\{ X_2(P),\dots,X_m(P) \}$.
By Proposition~\ref{prop3:diam-k}, we have that 
$$
\diam(G,g_A)\geq \frac{ \diam(G,g_I|_{\mathcal C}^*) }{\sigma_2(A)}.
$$

The subspace $\Span_\R\{X_1(P)\}$ of $\fg$ is of course an abelian subalgebra of $\fg$.
Since any two maximal abelian subalgebras of $\fg$ are conjugate via $\Ad(G)$ (see for instance \cite[Thm.~4.34]{Knapp-book-beyond}), there is $a\in G$ such that $X_1(P) \in \Ad_a(\ft)$. 
Clearly, $\fp:= (\Ad_a(\ft))^{\perp} \subset \mathcal C$. 
Lemma~\ref{lem2:sing-diam-monotonicity} gives
\begin{align*}
\diam(G,g_I|_{\mathcal C}^*) \geq \diam(G,g_I|_{\fp}^*)=\diam(G,g_I|_{\ft^{\perp}}^*).
\end{align*}
The last step follows since $(G,g_I|_{\fp}^*)$ and $(G,g_I|_{\ft^{\perp}}^*)$ are isometric because $g_I$ is $\Ad_a$-invariant.
Hence, \eqref{eq3:optimallower} follows by noting that $\diam(G,g_I|_{\ft^{\perp}}^*)= \diam(G/T,g_I|_{\ft^{\perp}})$ by Lemma~\ref{lem2:diam(G/d)=diam(G/H)}. 

We next establish \eqref{eq3:diam-optimal-non-true-lowerbound}. 
It is well known that $\fg=\fg_{\text{ss}}\oplus Z(\fg)$, where $Z(\fg)$ is the center of $\fg$ and $\fg_{\text{ss}}=[\fg,\fg]$ is semisimple (see e.g.\ \cite[Cor.~4.25]{Knapp-book-beyond}). 
Furthermore, $G$ is the commuting product $G=G_{\textrm{ss}}(Z_G)_0$ between the analytic subgroup $G_{\textrm{ss}}$ of $G$ corresponding to $\fg_{\textrm{ss}}$ and the connected component $(Z_G)_0$ of the center $Z_G$ of $G$, which coincides with the analytic subgroup of $G$ with Lie algebra $Z(\fg)$ (see e.g.\ \cite[Thm.~4.29]{Knapp-book-beyond}). 
The subgroups $G_{\textrm{ss}}$ and $(Z_G)_0$ are closed in $G$. 

Since $\fg_{\text{ss}}$ is semisimple, there is a bracket-generating set of $\fg_{\text{ss}}$ having two elements (see \cite[Thm.~ 6]{Kuranishi}).
That is, there are $Y_1,Y_2\in \fg_{\text{ss}}$ such that no any proper subalgebra of $\fg_{\text{ss}}$ contains them simultaneously. 
Let $Y_3\in Z(\fg)$ satisfying that $L:=\{\exp(tY_3):t\in\R\}$ is a dense subgroup of $(Z_G)_0$.
Let $P\in\Ot(m)$ satisfying that $\Span_\R\{X_1(P),X_2(P)\} =\Span_\R\{Y_1,Y_2\}$ and $\Span_\R\{X_1(P),X_2(P),X_3(P)\} =\Span_\R\{Y_1,Y_2,Y_3\}$. 
For any $s>0$, let 
$$
D_s = \diag( s,s,s, \underbrace{1,\dots,1}_{(m-3)\text{-times}}).
$$ 
One clearly has $\sigma_{4}(PD_s) = 1$ for all $s\geq1$. 
We claim that 
$$
\lim_{s\to\infty} \;\diam(G,g_{PD_s})\, \sigma_{4}(PD_s)=
\lim_{s\to\infty} \;\diam(G,g_{PD_s})= 0. 
$$

It is sufficient to prove that the distance between $e$ and an arbitrary element $a\in G$ goes to zero as $s\to\infty$. 
Let $\varepsilon>0$. 
Write $a=bz$ with $b\in G_{\textrm{ss}}$ and $z\in (Z_G)_0$. 
Since $L$ is dense in $(Z_G)_0$, there is $t_0>0$ such that $\dist_{(G,g_I)}(z,z_0)<\frac{\varepsilon}{2}$, where $z_0=\exp(t_0Y_3)$. 
Since $\Span_\R\{Y_1,Y_2\}$ is bracket generating in $\fg_{\textrm{ss}}$, Theorem~\ref{thm2:Chow} yields there is a smooth curve $\gamma:[0,1]\to G_{\textrm{ss}}$ with $\gamma(0)=e$, $\gamma(1)=b$, and $\gamma'(t)\in \Span_\R\{Y_1,Y_2\}$ for all $t$. 
Thus, the curve $\eta:[0,1]\to G$ given by $\eta(t)=\gamma(t)\exp(t\,t_0Y_3)$ satisfies $\eta(0)=e$, $\eta(1)=bz=a$, and $\eta'(t)\in \Span_\R\{Y_1,Y_2,Y_3\}$ for all $t$. 
Hence,
\begin{align*}
\dist_{(G,g_{PD_s})} (e,a) 
&\leq  \dist_{(G,g_{PD_s})} (bz,bz_0)+ \dist_{(G,g_{PD_s})} (e,bz_0) 
\\ 
&\leq  \dist_{(G,g_{I})} (bz,bz_0)+ \int_{0}^1 g_{PD_s}(\eta'(t),\eta'(t))^{1/2}\, dt
\\
&\leq  \frac{\varepsilon}{2}+ \frac{1}{s^2} \int_{0}^1 g_{I}(\eta'(t),\eta'(t))^{1/2}\, dt<\varepsilon
\end{align*}
for $s$ sufficiently large. 
Since this holds for all $\varepsilon>0$, we conclude that $\dist_{(G,g_{PD_s})} (e,a) =0$ as required. 
\end{proof}

\begin{question}
Is $2$ the largest index $k$ satisfying that $\inf_{A\in\GL(m,\R) }\diam(G,g_A)\, \sigma_{k}(A)>0$? 
\end{question}

The proof of Theorem~\ref{thm3:optimal-index-lower-bound} shows that the answer of this question is $2$ when $G$ is semisimple. 

\begin{remark}\label{rem3:flat-lower}
Assume that $G$ is abelian. 
We claim that the left-hand side of \eqref{eq3:diam-simple-estimates} has the optimal index $k=1$, that is,
\begin{equation}
\inf_{A\in\GL(m,\R) }\diam(G,g_A)\, \sigma_{2}(A)=0. 
\end{equation}

Write $G=S^1\times\dots\times S^1$ ($m$-times), and let $X_1,\dots,X_m$ denote the basis of $\mathfrak g$ such that $\exp(tX_i)$ is the closed curve in $G$ with period $1$ staying in the $i$th $S^1$-component. 
Let $g_I$ be the left-invariant metric on $G$ satisfying that this basis is orthonormal. 

Let $P\in\GL(m,\R)$ satisfying that $L:=\{\exp(tX_1(P)):t\in\R\}$ is dense in $G$. 
Now, taking $D_s=\diag(s,1,\dots,1)$ for any $s>0$, we have that $\sigma_2(PD_s)=1$ for all $s>1$ and 
\begin{equation}
\lim_{s\to\infty} \;\diam(G,g_{PD_s})\, \sigma_{2}(PD_s)=
\lim_{s\to\infty} \;\diam(G,g_{PD_s})= 0
\end{equation}
since $L$ is dense and the distance between two fixed points in $L$ goes to $0$ when $s\to\infty$. 
\end{remark}

We next show that the optimal index for the right-hand side in \eqref{eq3:diam-simple-estimates} is given by 
\begin{equation}\label{eq3:k_max}
k_{\max} =k_{\max}(G):=1+\max_{H} \,\dim H,
\end{equation}
where $H$ runs over the closed subgroups of $G$ with Lie algebra $\fh\neq\fg$, provided an external fact from metric geometry holds (Condition~\ref{claim}).  
We first state this fact.

For any index $k$, let us denote by $\op{Gr}_\fg(k)$ the space of $k$-dimensional subspaces of $\fg$, which has a structure of symmetric space known as a (real) Grassmannian space.
We will only use the corresponding underlying topology on it, which in fact makes it compact.   

\begin{lemma}\label{lem3:k_max}
If $\HH\in\op{Gr}_{\fg}(k)$ for any $k\geq k_{\max}$, then $\HH$ is bracket-generating. 
\end{lemma}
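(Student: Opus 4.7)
My plan is to argue by contradiction: suppose there exists $\HH\in\op{Gr}_{\fg}(k)$ with $k\geq k_{\max}$ that fails to be bracket-generating. Let $\fh'$ denote the Lie subalgebra of $\fg$ generated by $\HH$; by assumption $\fh'$ is a proper subalgebra of $\fg$, and $\dim\fh'\geq \dim\HH=k\geq k_{\max}$. Let $H'$ denote the connected analytic subgroup of $G$ whose Lie algebra is $\fh'$.

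The principal case is $\overline{H'}\neq G$. Here $\overline{H'}$ is a closed connected proper subgroup of $G$ whose Lie algebra contains $\fh'$, so $\dim\overline{H'}\geq \dim\fh'\geq k_{\max}$. This directly contradicts the definition $k_{\max}=1+\max_H\dim H$, the maximum being taken over all closed subgroups of $G$ of dimension strictly less than $m$.

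The subtle case is $\overline{H'}=G$, i.e.\ $H'$ is dense in $G$. I would resolve it by splitting according to whether $\fg$ is semisimple or not. If $\fg$ is semisimple, then by the theorem already cited in the paper that a compact connected semisimple Lie group admits no dense proper subgroups (see \cite[Thm.~3.3]{MaciasVirgos}), $\overline{H'}=G$ forces $H'=G$ and hence $\fh'=\fg$, contradicting properness of $\fh'$. If $\fg$ is not semisimple, then following the decomposition $G=G_{\textrm{ss}}(Z_G)_0$ recalled in the proof of Theorem~\ref{thm3:optimal-index-lower-bound}, the connected centre $(Z_G)_0$ is a nontrivial torus; picking any codimension-one subtorus $Z_1\subset (Z_G)_0$, the commuting product $G_{\textrm{ss}}\cdot Z_1$ is a closed subgroup of $G$ whose dimension equals $\dim G_{\textrm{ss}}+\dim Z_1=m-1$, since $G_{\textrm{ss}}\cap (Z_G)_0$ is finite. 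Hence $k_{\max}=m$, but then $\dim\fh'\geq k_{\max}=m$ forces $\fh'=\fg$, again a contradiction.

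The main obstacle is precisely the case $\overline{H'}=G$, which cannot be discarded by a purely Lie-algebraic argument: in general (as the classical irrational-slope line in $T^2$ shows) a connected subgroup of $G$ can have a Lie algebra strictly smaller than the Lie algebra of its closure, so one cannot directly identify $\fh'$ as the Lie algebra of a closed proper subgroup. The key observation that resolves it is that this density pathology requires a positive-dimensional centre, and in that regime the existence of a closed codimension-$1$ subgroup of $G$ already saturates $k_{\max}$ to $m$, making the hypothesis $\dim \HH\geq k_{\max}$ sufficient to force $\HH=\fg$.
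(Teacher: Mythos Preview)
Your proof is correct and uses essentially the same ingredients as the paper's: in the non-semisimple case one has $k_{\max}=m$ (the paper simply asserts the existence of codimension-one closed subgroups, whereas you supply the explicit construction $G_{\textrm{ss}}\cdot Z_1$), and in the semisimple case the argument hinges on the Macias Virg\'os result that a compact connected semisimple Lie group admits no dense proper connected subgroup. The only difference is organizational: the paper splits first on whether $G$ is semisimple, while you split first on whether $\overline{H'}=G$; the key facts and the dimension-count contradiction are identical.
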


\begin{proof}
	If $G$ is not semisimple, the assertion is obvious since $k_{\max}=m=\dim G$ because there are closed subgroups of $G$ of codimension one. 
	
	We assume that $G$ is semisimple. 
	Let $\HH$ be a $k$-dimensional subspace of $\fg$ with $k\geq k_{\max}$. 
	Let $\fh$ be the smallest subalgebra of $\fg$ containing $\HH$. 
	The corresponding analytic subgroup $H$ of $G$ with Lie algebra $\fh$ is dense in $G$. 
	In fact, if $\overline H$ is a proper subgroup of $G$, then its dimension is strictly less than $k_{\max}$ by \eqref{eq3:k_max}, which is a contradiction since $k=\dim\HH\leq \dim \fh\leq \dim \overline H$. 
	However, since there are no dense proper connected Lie subgroups in a compact semisimple Lie group (cf.\ \cite{MaciasVirgos}), we obtain that $H=G$ and consequently, $\HH$ is bracket-generating in $\fg$. 
\end{proof}

For $k\geq k_{\max}$, Lemma~\ref{lem3:k_max} implies $\diam(G,\HH,g_I|_{\HH})<\infty$ for all $\HH\in\op{Gr}_{\fg}(k)$ by Theorem~\ref{thm2:Chow}.

\begin{condition}\label{claim}
If $G$ is semisimple and $k\geq k_{\max}$, then the map $\Upsilon_k: \op{Gr}_\fg(k) \to \R_{>0}$ given by $\Upsilon_k(\HH) = \diam(G,\HH,g_I|_{\HH})$ is continuous. 
\end{condition}

It is sufficient to prove that the map $\HH\mapsto (G,\HH,g_I|_{\HH})$, whose target is the space of metric spaces endowed with the Gromov-Hausdorff distance, is continuous. 
In fact, $\Upsilon_k$ is the composition of this map with the diameter function, which is continuous (see \cite[Ex.~7.3.14]{BuragoBuragoIvanov-book}).%
\footnote{
Update: the validity of Condition~\ref{claim} follows by Theorem 1.6 (see Remark 4.4) in \cite{AntonelliLeDonneGolo}.
Consequently, the proof of Theorem~\ref{thm3:optimal-index-upper-bound} is complete, as well as for Theorem~\ref{thm:main0}. }

\begin{theorem}\label{thm3:optimal-index-upper-bound}
We have that
\begin{equation}
	\label{eq3:diam-optimal-non-true-upperbound}
	\sup_{A\in\GL(m,\R) }\diam(G,g_A) \;\sigma_{k_{\max}-1}(A)=\infty.
\end{equation}
Consequently, if $G$ is not semisimple, then $k_{\max}=m$ is the smallest index $k$ satisfying that $\sup_{A\in\GL(m,\R) }\diam(G,g_A)\, \sigma_{k}(A)<\infty$.

When $G$ is semisimple, if Condition~\ref{claim} holds, then there is a positive real number $C_2$ depending on $G$ and $g_I$ such that 
\begin{equation}\label{eq3:optimalupper}
\diam(G,g_A)\leq \frac{C_2}{\sigma_{k_{\max}}(A)},
\qquad\text{for all $A\in\GL(m,\R)$}.
\end{equation}
Consequently, $k_{\max}$ is the smallest index $k$ satisfying that $\sup_{A\in\GL(m,\R) }\diam(G,g_A)\, \sigma_{k}(A)<\infty$.
\end{theorem}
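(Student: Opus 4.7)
The plan is to prove the two extremes of the statement separately. For the supremum claim~\eqref{eq3:diam-optimal-non-true-upperbound}, I would exhibit, for each $s\geq 1$, an explicit matrix $A_s$ with $\sigma_{k_{\max}-1}(A_s)=s$ while $\diam(G,g_{A_s})$ is bounded below by a fixed positive constant. By the very definition of $k_{\max}$, there is a closed subgroup $H$ of $G$ with $\dim H=k_{\max}-1<m$; let $\fh$ be its Lie algebra and pick $P\in\Ot(m)$ so that $\{X_1(P),\dots,X_{k_{\max}-1}(P)\}$ is a $g_I$-orthonormal basis of $\fh$, whence $\{X_{k_{\max}}(P),\dots,X_m(P)\}$ spans $\fh^{\perp}$. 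Set $D_s=\diag(s,\dots,s,1,\dots,1)$ with $s$ in the first $k_{\max}-1$ entries and $A_s=PD_s$. A direct computation using Remark~\ref{rem2:easy-consequences} shows that $\{X_j(P)\}$ is $g_{A_s}$-orthogonal with $g_{A_s}(X_j(P),X_j(P))=1/s^2$ for $j\leq k_{\max}-1$ and $=1$ otherwise, so $g_{A_s}(X,X)\geq g_I|_{\fh^{\perp}}^{*}(X,X)$ for every $X\in\fg$. Lemmas~\ref{lem2:sing-diam-monotonicity} and~\ref{lem2:diam(G/d)=diam(G/H)} then yield
\[
\diam(G,g_{A_s})\,\sigma_{k_{\max}-1}(A_s)\;\geq\;s\cdot \diam(G,g_I|_{\fh^{\perp}}^{*})\;=\;s\cdot \diam(G/H,g_I|_{\fh^{\perp}}),
\]
which tends to $\infty$ since $G/H$ is a non-trivial compact homogeneous space.

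The non-semisimple case of the ``consequently'' assertion is then automatic: when $G$ has non-trivial connected centre, combining $G_{\mathrm{ss}}$ with any codimension-one closed subgroup of $(Z_G)_0$ produces a closed subgroup of codimension one, forcing $k_{\max}=m$, so the desired uniform bound is already the right-hand side of the simple estimate~\eqref{eq3:diam-simple-estimates}.

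For the upper bound~\eqref{eq3:optimalupper} in the semisimple case (assuming Condition~\ref{claim}), I would apply the main tool, Proposition~\ref{prop3:diam-k}, with $k=k_{\max}$: for every $A\in\GL(m,\R)$ and every $P\in\Ot(m)$ sorting $A$,
\[
\diam(G,g_A)\;\leq\;\frac{\diam\bigl(G,\HH_{P,k_{\max}},g_I|_{\HH_{P,k_{\max}}}\bigr)}{\sigma_{k_{\max}}(A)}.
\]
By Lemma~\ref{lem3:k_max}, every $k_{\max}$-dimensional subspace of $\fg$ is bracket-generating, so Theorem~\ref{thm2:Chow} guarantees that each of these sub-Riemannian diameters is finite. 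Condition~\ref{claim} asserts that the function $\Upsilon_{k_{\max}}\colon\op{Gr}_\fg(k_{\max})\to\R_{>0}$, $\HH\mapsto\diam(G,\HH,g_I|_{\HH})$, is continuous; since $\op{Gr}_\fg(k_{\max})$ is compact, $\Upsilon_{k_{\max}}$ attains a finite maximum $C_2$, independent of $A$ and $P$. This proves~\eqref{eq3:optimalupper}, and the minimality of $k_{\max}$ as the smallest admissible index is immediate from~\eqref{eq3:diam-optimal-non-true-upperbound}.

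The main obstacle is of course Condition~\ref{claim} itself: controlling the sub-Riemannian diameter uniformly as the horizontal distribution varies through a compact Grassmannian. A natural strategy (indicated in the paragraph preceding the statement) is to upgrade $\HH\mapsto(G,\HH,g_I|_{\HH})$ to a continuous map into the space of compact metric spaces equipped with the Gromov--Hausdorff distance, after which continuity of the diameter functional on that space gives the desired conclusion. Once Condition~\ref{claim} is granted, everything else in the argument is a bookkeeping assembly of Proposition~\ref{prop3:diam-k}, Lemma~\ref{lem3:k_max}, and Theorem~\ref{thm2:Chow}.
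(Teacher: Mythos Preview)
Your argument is correct and, for the semisimple upper bound~\eqref{eq3:optimalupper} and the non-semisimple consequence, matches the paper's proof exactly: apply Proposition~\ref{prop3:diam-k} at $k=k_{\max}$, invoke Lemma~\ref{lem3:k_max} for bracket-generation, and use Condition~\ref{claim} plus compactness of the Grassmannian to bound the sub-Riemannian diameters uniformly.

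For~\eqref{eq3:diam-optimal-non-true-upperbound} there is a minor but genuine difference in execution. The paper fixes the $\fh$-directions and shrinks the $\fh^{\perp}$-directions (taking $D_s=\diag(1,\dots,1,s,\dots,s)$ with $s\to0^+$), then directly estimates the $g_{PD_s}$-length of any curve joining $e$ to a point $a\notin H$ by decomposing $\gamma'$ into $\fh$- and $\fp$-components and showing the length is at least $s^{-1}\int_0^1 g_I(\gamma_{\fp}'(t),\gamma_{\fp}'(t))^{1/2}\,dt$. You instead shrink the $\fh$-directions (taking $D_s=\diag(s,\dots,s,1,\dots,1)$ with $s\to\infty$), keep the $\fh^{\perp}$-metric fixed, and invoke the comparison Lemmas~\ref{lem2:sing-diam-monotonicity} and~\ref{lem2:diam(G/d)=diam(G/H)} to bound $\diam(G,g_{A_s})$ from below by the fixed constant $\diam(G/H,g_I|_{\fh^{\perp}})$. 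The two constructions are dual via the homothety invariance of $\diam(G,g_A)\,\sigma_{k}(A)$, so neither is stronger; your route avoids the explicit curve-length computation and stays entirely within the singular-comparison framework already developed in Section~\ref{sec:preliminaries}, while the paper's route is more self-contained at that point in the text.
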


\begin{proof}
	We first show \eqref{eq3:diam-optimal-non-true-upperbound}.
	Let $H$ be a proper closed subgroup of $G$ with Lie algebra $\fh\neq\fg$. 
	The dimension $n$ of $H$ clearly satisfies $n\leq k_{\max}-1$.
	There is $P\in\Ot(m)$ such that $\fh = \Span_\R\{X_1(P),\dots, X_n(P)\}$. 
	For $s>0$, let 
	$$
	D_s=\diag(\underbrace{1,\dots,1}_{n\text{-times}}, \underbrace{s,\dots,s}_{(m-n)\text{-times}}).
	$$
	One clearly has that $\sigma_n(PD_s)=1$ for all $s\leq 1$. 
	We claim that 
	$$
	\lim_{s\to0^+} \;\diam(G,g_{PD_s})\, \sigma_n(PD_s)=
	\lim_{s\to0^+} \;\diam(G,g_{PD_s}) =\infty.
	$$ 
	In fact, we will show that the distance between $e$ and any point $a\notin H$ goes to infinity when $s\to 0$.  
	Let $\gamma:[0,1]\to G$ be any smooth curve with $\gamma(0)=e$ and $\gamma(1)=a$. 
	We write $\gamma'(t)=\gamma'_{\fh}(t)+\gamma'_\fp(t)$ with $\gamma_\fh'(t) \in \fh$ and $\gamma_\fp'(t) \in\fp:= \fh^{\perp_{g_I}}= \{X\in \fg: g_I(X,\fg)=0\}$. 
	We note that $\gamma_\fp'\equiv 0$ is not possible since in this case $\gamma(t)$ will stay in $H$ for all $t$. 
	It follows that 
	\begin{align*}
		\op{length}_{PD_s}(\gamma) 
		&= \int_{0}^1 g_{PD_s}(\gamma'(t),\gamma'(t))^{1/2}\, dt
\geq \frac{1}{s}\, \int_0^1 g_{I}(\gamma_\fp'(t),\gamma_\fp'(t))^{1/2}\, dt,
	\end{align*}
	which goes to infinity as $s\to0^+$ since $\int_0^1 g_{I}(\gamma_\fp'(t),\gamma_\fp'(t))^{1/2}\, dt>0$. 
	The proof of \eqref{eq3:diam-optimal-non-true-upperbound} is complete by taking $H$ of dimension $k_{\max}-1$, which obviously exists by \eqref{eq3:k_max}. 
	
If $G$ is not semisimple, then $G$ contains closed subgroups of codimension one, thus $k_{\max}=m$. 
Therefore, the right hand side of \eqref{eq3:diam-simple-estimates} gives already the optimal index. 
	
We next prove \eqref{eq3:optimalupper} under the assumptions that $G$ is semisimple and Condition~\ref{claim} holds. 
We set $C_2:= \sup_{\HH\in\op{Gr}_\fg(k)} \Upsilon_{k_{\max}}(\HH)$, which is finite since $\Upsilon_{k_{\max}}: \op{Gr}_\fg(k_{\max})\to\R_{>0}$ is continuous (by Condition~\ref{claim}) and $\op{Gr}_\fg(k)$ is compact. 
Now, \eqref{eq3:optimalupper} follows immediately from 
Proposition~\ref{prop3:diam-k}. 
\end{proof}

\begin{remark}\label{rem3:SU(2)}
For $G=\SU(2)$, the existences of $C_1$ and $C_2$ satisfying \eqref{eq3:optimallower} and \eqref{eq3:optimalupper} were established by Eldredge, Gordina and Saloff-Coste (see \cite[Lem.~7.1]{EldredgeGordinaSaloff18}). 
Furthermore, \cite[Cor.~4.4]{Lauret-SpecSU(2)} gives explicit values for them, when $G$ is $\SU(2)$ or $\SO(3)$, namely
	\begin{align*}
		\frac{\pi/2}{\sigma_2(A)} &\leq \diam(\SU(2),g_A)\leq \frac{\pi}{\sigma_2(A)}, 
		&
		\frac{\pi/2}{\sigma_2(A)} &\leq \diam(\SO(3),g_A)\leq \frac{\sqrt{3}\pi/2}{\sigma_2(A)},
	\end{align*}
for all $A\in \GL(3,\R)$.
\end{remark}

\subsection{Diameter estimate for a restricted subclass}\label{subsec:diam-restricted}
We now return to the discussion of Eldredge, Gordina, and Saloff-Coste's conjecture. 
The next section will contain analogous result as Theorems~\ref{thm3:optimal-index-lower-bound} and \ref{thm3:optimal-index-upper-bound} having optimal choices for the indices concerning the first eigenvalue $\lambda_1(G,g_A)$ of the Laplace--Beltrami operator associated to $(G,g_A)$. 
However, we will see in Section~\ref{sec:upperbounds} that these estimates are not sufficient to prove the EGS conjecture beside for $\SU(2)$, or $\SO(3)$, which are the only non-abelian known cases so far. 
The next goal is to refine the estimates in  Proposition~\ref{prop3:diam-k} to obtain uniform estimates valid for a large subclass of left-invariant metrics on $G$.

\begin{definition} \label{def3:bracket-generating-indexA} 
We associate to an element $P\in\Ot(m)$ the following objects:
\begin{itemize}
\item the \emph{bracket-generating index} $\ell(P)$ given by the smallest positive integer $k$ satisfying that $\{X_1(P),\dots, X_k(P)\}$ is bracket generating,

\item $\fh_P$ denotes the Lie subalgebra of $\fg$ generated by $\HH_{P,\ell(P)-1} = \{X_1(P),\dots,X_{\ell(P)-1}(P)\}$, 

\item $H_P$ denotes the only connected subgroup of $G$ with Lie algebra $\fh_P$, 

\item $\bar H_P$ denotes the closure of $H_P$ which is a closed subgroup of $G$, 

\item $\bar \fh_P$ denotes the Lie algebra of $\bar H_P$,

\item $\bar\fh_P^\perp$ denotes the orthogonal complement of $\bar\fh_P$ in $\fg$ with respect to $g_I$,

\item the subclass of left-invariant metrics on $G$ given by 
\begin{equation}
\mathcal M^G(P):= \{g_{PQD}: Q\in\mathcal O(m,\ell(P)),\, D\in\mathcal D(m)\},
\end{equation}
where 
\begin{equation}\label{eq3:D(m)O(m,k)}
\begin{aligned}
	\mathcal D(m) &=\{\diag(\sigma_1,\dots,\sigma_m) \in \GL(m,\R): \sigma_1\geq\dots\geq \sigma_m>0\},
	\\
	\mathcal O(m,k)&= \left\{
	\left(\begin{smallmatrix}
		Q_1\\ &1\\ &&Q_2
	\end{smallmatrix}\right)
	: Q_1\in\Ot(k-1),\, Q_2\in \Ot(m-k)\right \}\subset \Ot(m). 
\end{aligned} 
\end{equation}
\end{itemize}
\end{definition}

It is important to note that the objects introduced in Definition~\ref{def3:bracket-generating-indexA} depend on the choices of the $\Ad(G)$-invariant inner product $g_I(\cdot,\cdot)$ on $\mathfrak g$ and its orthonormal basis $\mathcal B$ in Subsection~\ref{subsec:left-invmetrics}. 
(This situation was anticipated at the beginning of Subsection~\ref{subsec:left-invmetrics}.)

\begin{theorem}\label{thm3:diam(g_PQD)}
Let $P\in\Ot(m)$ and set $k=\ell(P)$. 
We have that 
\begin{equation}\label{eq3:diam(g_PQD)geq}
\diam(G,g_{A}) \leq 
\frac{\diam(G,\HH_{P,k}, g_I|_{\HH_{P,k}})} {\sigma_{k}(A)} <\infty
\qquad \text{for every } g_A\in \mathcal M^G(P).
\end{equation}
Furthermore, if $\bar H_P\neq G$ 
(e.g.\ if $G$ is semisimple), then 
\begin{equation}\label{eq3:diam(g_PQD)leq}
\diam(G,g_{A}) \geq \frac{\diam(G/\bar H_P,g_I|_{\bar\fh_P^\perp})} {\sigma_{k}(A)} >0
\qquad \text{for every }g_A\in \mathcal M^G(P).
\end{equation}
\end{theorem}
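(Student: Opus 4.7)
The plan is to reduce the statement to Proposition~\ref{prop3:diam-k} applied at the index $k = \ell(P)$, using $\tilde P := PQ$ as the sorting matrix for $A = PQD$. A direct matrix computation gives $AA^t = PQD^2Q^tP^t = \tilde P D^2 \tilde P^t$, so $\tilde P \in \Ot(m)$ sorts $A$ and the singular values $\sigma_j(A)$ are exactly the (decreasing) diagonal entries of $D$; in particular, $\sigma_k(A)$ is the pivot entry fixed by the block structure of $\mathcal O(m,k)$.

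The second step is to identify $\HH_{\tilde P, k}$ with $\HH_{P,k}$ and $\CC_{\tilde P, k}$ with $\CC_{P,k}$. Because $Q = \diag(Q_1, 1, Q_2)$ has its $k$-th column equal to $e_k$, we have $X_k(\tilde P) = X_k(P)$; the block $Q_1$ only recombines $X_1(P), \ldots, X_{k-1}(P)$ among themselves, and $Q_2$ only recombines $X_{k+1}(P), \ldots, X_m(P)$, so the identifications are immediate. Feeding these into Proposition~\ref{prop3:diam-k} yields
\[
\frac{\diam(G, g_I|_{\CC_{P,k}}^*)}{\sigma_k(A)} \;\leq\; \diam(G, g_A) \;\leq\; \frac{\diam(G, \HH_{P,k}, g_I|_{\HH_{P,k}})}{\sigma_k(A)}.
\]
The upper bound is finite by Theorem~\ref{thm2:Chow}, since $\HH_{P,k}$ is bracket-generating by the very definition of $\ell(P) = k$.

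For the lower bound the plan is to pass from $\CC_{P,k}$ down to $\bar\fh_P^\perp$. From $\HH_{P,k-1} \subseteq \fh_P \subseteq \bar\fh_P$ (the first inclusion being the definition of $\fh_P$), taking $g_I$-orthogonals reverses inclusions to $\bar\fh_P^\perp \subseteq \CC_{P,k}$, forcing the pointwise inequality $g_I|_{\CC_{P,k}}^*(X,X) \geq g_I|_{\bar\fh_P^\perp}^*(X,X)$ for every $X \in \fg$. Lemma~\ref{lem2:sing-diam-monotonicity} transports this to diameters, and Lemma~\ref{lem2:diam(G/d)=diam(G/H)} --- applicable because $g_I|_{\bar\fh_P^\perp}^*$ has radical exactly $\bar\fh_P$ (the Lie algebra of the \emph{closed} subgroup $\bar H_P$) and is $\Ad(\bar H_P)$-invariant by $\Ad(G)$-invariance of $g_I$ --- identifies $\diam(G, g_I|_{\bar\fh_P^\perp}^*)$ with $\diam(G/\bar H_P, g_I|_{\bar\fh_P^\perp})$, which is strictly positive whenever $\bar H_P \neq G$. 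For the parenthetical semisimple case, $\fh_P$ is a proper subalgebra by minimality of $\ell(P)$, and since compact semisimple Lie groups admit no dense proper connected subgroup, $\bar H_P$ is automatically proper in $G$.

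The one delicate point --- rather than a genuine obstacle --- is the forced passage to the \emph{closure} $\bar H_P$: Lemma~\ref{lem2:diam(G/d)=diam(G/H)} requires a closed subgroup whose Lie algebra coincides with the radical of the singular form, whereas $\fh_P$ itself need not correspond to a closed subgroup. Example~\ref{ex2:diam-singular=0} warns that without such closedness the singular diameter can collapse to zero, so passing from $\CC_{P,k}$ to the smaller subspace $\bar\fh_P^\perp$ is precisely what keeps the lower bound nonvacuous.
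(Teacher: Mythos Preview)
Your proof is correct and follows essentially the same route as the paper's: you verify that $\tilde P=PQ$ sorts $A=PQD$ with $D(A)=D$, check that the block structure of $Q\in\mathcal O(m,k)$ forces $\HH_{PQ,k}=\HH_{P,k}$ and $\CC_{PQ,k}=\CC_{P,k}$, invoke Proposition~\ref{prop3:diam-k} at index $k$, and then handle the two extremes via Theorem~\ref{thm2:Chow} on the upper side and the chain $\bar\fh_P^\perp\subset\CC_{P,k}$, Lemma~\ref{lem2:sing-diam-monotonicity}, Lemma~\ref{lem2:diam(G/d)=diam(G/H)} on the lower side. Your added commentary on why the passage to the closure $\bar H_P$ is forced (so that Lemma~\ref{lem2:diam(G/d)=diam(G/H)} applies) is a nice clarification not spelled out in the paper.
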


\begin{proof}
Let $Q\in\mathcal O(m,k)$. 
We have that $X_j(PQ) = \sum_{i=1}^m (PQ)_{i,j}X_i = \sum_{i=1}^m \sum_{l=1}^m P_{i,l}Q_{l,j}X_i = \sum_{l=1}^m Q_{l,j} X_l(P)$, thus
\begin{align*}
X_j(PQ) 
=
\begin{cases}
\sum\limits_{l=1}^{k-1} Q_{l,j}X_l(P)&\text{ if }j<k,\\[4mm]
X_k(P)&\text{ if }j=k,\\[2mm]
\sum\limits_{l=k+1}^{m} Q_{l,j}X_l(P)&\text{ if }j>k.
\end{cases}
\end{align*}
It follows that $\HH_{PQ,k}= \HH_{P,k}$, $\CC_{PQ,k}=\CC_{P,k}$, and $\ell(PQ)=\ell(P)=k$. 
Hence, for any $D\in\mathcal D(m)$, Proposition~\ref{prop3:diam-k} implies that 
\begin{equation*}
\frac{ \diam(G,g_I|_{\CC_{P,k}}^* )} {\sigma_k(PQD)} 
\leq \diam(G,g_{PQD}) \leq 
\frac{\diam(G,\HH_{P,k},g_I|_{\HH_{P,k}} )}{\sigma_k(PQD)}.
\end{equation*}
Note that $\sigma_k(PQD)=\sigma_k(D)$ for all $D\in\mathcal D(m)$. 
From Chow--Rashevskii Theorem (Theorem~\ref{thm2:Chow}), it follows that $\diam(G,\HH_{P,k},g_I|_{\HH_{P,k}}) <\infty$, showing \eqref{eq3:diam(g_PQD)geq}.

It remains to show that $\diam(G,g_I|_{\CC_{P,k}}^* ) \geq \diam(G/\bar H_P,g_I|_{\bar\fh_P^\perp})>0$. 
One has that $\CC_{P,k}\supset \bar \fh_P^\perp\neq 0$ since $\Span_\R\{X_1(P),\dots,X_{k-1}(P)\} \subset \fh_P\subset \bar \fh_P \neq \fg$. 
Therefore $\diam(G,g_I|_{\CC_{P,k}}^* )\geq \diam(G,g_I|_{\bar\fh_P^\perp}^* )$ by Lemma~\ref{lem2:sing-diam-monotonicity}. 
We conclude that $\diam(G,g_I|_{\bar\fh_P^\perp}^* ) = \diam(G/\bar H_P,g_I|_{\bar\fh_P^\perp})$ by Lemma~\ref{lem2:diam(G/d)=diam(G/H)}. 
That $\diam(G/\bar H_P,g_I|_{\bar\fh_P^\perp})>0$ follows from $\bar \fh_P\neq 0$. 
\end{proof}

We next give a new version of \eqref{eq3:diam(g_PQD)leq} for $G$ semisimple.
We will replace ${\diam(G/\bar H_P,g_I|_{\bar\fh_P^\perp})}$ by a constant independent of $P$.
However, the inequality is still valid to the restricted subclass $\mathcal M^G(P)$ which does depend on $P$.  
We first need some tools from Lie theory.
The next lemma is well known, but we include a proof for completeness, which was provided by the mathoverflow user Ycor~\cite{Ycor}.

\begin{lemma}\label{lem3:max-subalgebras}
When $\fg$ is semisimple, there are finitely many maximal subalgebras in $\fg$ up to conjugation.
\end{lemma}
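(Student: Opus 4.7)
Plan: My approach combines a compactness argument in the Grassmannian with a cohomological rigidity for reductive subalgebras. Assume, for contradiction, that there are infinitely many $\Ad(G)$-conjugacy classes of maximal subalgebras of $\fg$. Since all subalgebras have dimension at most $m=\dim\fg$, by pigeonhole one can extract a fixed integer $k$ and a sequence $(\fh_n)_{n\geq 1}$ of maximal subalgebras of dimension $k$ lying in pairwise distinct conjugacy classes. The Grassmannian $\op{Gr}_\fg(k)$ is compact, so after passing to a subsequence we may assume $\fh_n\to\fh$ for some $k$-plane $\fh\subset\fg$. The bracket condition $[\fh,\fh]\subset\fh$ is closed (even algebraic) on the Grassmannian, so $\fh$ is itself a subalgebra.

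The heart of the argument is a rigidity statement: the $\Ad(G)$-orbit of $\fh$ is a neighborhood of $\fh$ in the variety $V\subset\op{Gr}_\fg(k)$ of $k$-dimensional subalgebras. Granting this, $\fh_n$ must be $\Ad(G)$-conjugate to $\fh$ for all sufficiently large $n$, forcing infinitely many $\fh_n$ into a single conjugacy class and contradicting our choice. For the rigidity, the tangent space to $\op{Gr}_\fg(k)$ at $\fh$ is $\Hom(\fh,\fg/\fh)$; a routine first-order deformation computation shows that the tangent space to $V$ at $\fh$ lies inside the space $Z^1(\fh,\fg/\fh)$ of $1$-cocycles for the Lie-algebra cohomology of $\fh$ with values in $\fg/\fh$, while the image of the infinitesimal orbit map $\fg\ni Y\mapsto (X\mapsto [Y,X]\bmod \fh)$ is exactly the space of coboundaries $B^1(\fh,\fg/\fh)$ (the cocycle identity follows from Jacobi). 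Hence, by the submersion theorem, openness of the orbit in $V$ at $\fh$ is implied by the vanishing $H^1(\fh,\fg/\fh)=0$.

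To verify this cohomology vanishing I would argue as follows. Because $G$ is compact, the closure in $G$ of the analytic subgroup generated by $\fh$ is a compact Lie subgroup of $G$; hence $\fh$ is the Lie algebra of a compact group and therefore reductive. Moreover, the $\Ad(G)$-invariant inner product $g_I$ restricts to an $\fh$-invariant inner product under which $\fg/\fh$ may be identified with the $g_I$-orthogonal complement of $\fh$ in $\fg$, so $\fg/\fh$ is a completely reducible $\fh$-module. The standard extension of Whitehead's first lemma from semisimple to reductive Lie algebras acting completely reducibly then gives $H^1(\fh,\fg/\fh)=0$, which closes the rigidity step and hence the proof.

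The main obstacle I anticipate is making the tangent-space identification precise: one must check the first-order cocycle condition carefully and invoke a version of the submersion theorem valid at possibly singular points of the subalgebra variety $V$. A peripheral technical remark is that the limit $\fh$ need not itself be maximal, but this is harmless because the rigidity applies to any reductive subalgebra, and it is the $\Ad(G)$-orbit of $\fh$ (not of any individual $\fh_n$) whose openness in $V$ delivers the contradiction.
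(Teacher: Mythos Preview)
Your approach via Grassmannian compactness and cohomological rigidity is essentially Richardson's own strategy, and it differs from the paper's short argument. However, there is a genuine gap at the cohomology step. The claimed ``standard extension of Whitehead's first lemma to reductive Lie algebras acting completely reducibly'' is false: take $\fh=\R$ acting trivially on $V=\R$, which is reductive with a completely reducible module, yet $H^1(\fh,V)=\Hom(\R,\R)\neq 0$. In general, for reductive $\fh=\fh_{\mathrm{ss}}\oplus\fz$ acting completely reducibly on $V$, the Hochschild--Serre spectral sequence gives $H^1(\fh,V)\cong \Hom(\fz,V^{\fh})$, which vanishes only when $\fz=0$ or $V^{\fh}=0$. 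In your setting $V=\fg/\fh$ and $V^{\fh}\cong N_\fg(\fh)/\fh$, so what you actually need is that the limit $\fh$ be semisimple or self-normalizing. This \emph{does} hold for each maximal $\fh_n$ (if $Z(\fh_n)\neq 0$ then $Z_\fg(Z(\fh_n))\supseteq\fh_n$ is a subalgebra, hence equals $\fh_n$ by maximality, forcing $(\fg/\fh_n)^{\fh_n}=0$), but you apply rigidity to the \emph{limit} $\fh$, which you explicitly allow to be non-maximal and call ``harmless''. It is not harmless: without maximality the vanishing is unjustified, the $\Ad(G)$-orbit of $\fh$ need not be open in the subalgebra variety, and the contradiction does not follow. (You may be conflating Lie-algebra cohomology with rational cohomology of a reductive \emph{group}, where such vanishing does hold by averaging.)

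By contrast, the paper quotes Richardson's finiteness result for \emph{semisimple} subalgebras---precisely the class where Whitehead's lemma applies unconditionally---and then observes that any maximal subalgebra of $\fg$ is either a maximal torus (one conjugacy class) or of the form $\fh_{\mathrm{ss}}\oplus Z_\fg(\fh_{\mathrm{ss}})$ for some semisimple $\fh_{\mathrm{ss}}$, so that the conjugacy class of a maximal subalgebra is determined by that of its semisimple part.
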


\begin{proof}
It is well known that there are finitely many semisimple subalgebras in $\fg$ up to conjugation (cf.\ \cite[Prop.~12.1]{Richardson}). 
The assertion of the lemma follows since 
any maximal subalgebra of $\fg$, if not abelian, is of the form $\fh\oplus \{X\in\fg: [X,\fh]=0\}$ for some semisimple subalgebra $\fh$ of $\fg$. 
Note that if a maximal subalgebra of $\fg$ is abelian, it is the Lie algebra of a maximal torus which is unique up to conjugation. 
\end{proof}

The next remark translates the previous result to a statement which will be useful later. 

\begin{remark}\label{rem3:max-subalgebras}
Assume that $\fg$ is semisimple. 
By Lemma~\ref{lem3:max-subalgebras}, there are $\fh_1,\dots,\fh_r$ proper Lie subalgebras of $\fg$ such that
\begin{itemize}
	\item for any Lie subalgebra $\fa$ of $\fg$, there is $a\in G$ such that $\fa\subset \Ad_a(\fh_i)$ for some $i$;
	
	\item if a Lie subalgebra $\fa$ of $\fg$ contains properly $\fh_i$ for some $i$, then $\fa=\fg$.
\end{itemize}

For each $i$, let $H_i$ denote the only connected Lie subgroup of $G$ with Lie algebra $\fh_i$. 
It turns out that $H_i$ is closed in $G$. 
In fact, $\bar H_i=H_i$ or $\bar H_i=G$ since the Lie algebra $\bar\fh_i$ of $\bar H_i$ satisfies $\fh_i\subset \bar\fh_i\subset\fg$, but $\bar H_i=G$ is not possible because there are no dense proper connected Lie subgroups in a compact semisimple Lie group (cf.\ \cite{MaciasVirgos}). 
Moreover, the Riemannian manifold $(G/H_i,g_I|_{\fh_i^\perp})$ does not depend on the choice of $H_i$ since $g_I$ is invariant by conjugation. 
\end{remark}

\begin{corollary}\label{cor3:diam-inf-universal}
Let $G$ be a compact connected semisimple Lie group. 
Under the notation introduced in Remark~\ref{rem3:max-subalgebras}, we set
\begin{equation}
C_1'=\min_{1\leq i\leq r} \, \diam(G/H_i,g_I|_{\fh_i^{\perp}}),
\end{equation}
which is positive and depends only on $G$ and $g_I$. 
Then, for any $P\in\Ot(m)$, 
\begin{equation}\label{eq3:diamP-geq-independent}
\diam(G,g_{A}) \geq \frac{C_1'}{\sigma_{\ell(P)}(A)}
\qquad\text{for every } A\in\mathcal M^G(P).
\end{equation}
\end{corollary}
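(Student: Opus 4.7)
My plan is to combine Theorem~\ref{thm3:diam(g_PQD)} with the finiteness statement of Remark~\ref{rem3:max-subalgebras}. Fix $P\in\Ot(m)$ and $A\in\mathcal M^G(P)$. Theorem~\ref{thm3:diam(g_PQD)} already gives
\begin{equation*}
\diam(G,g_A) \geq \frac{\diam(G/\bar H_P,\, g_I|_{\bar\fh_P^\perp})}{\sigma_{\ell(P)}(A)},
\end{equation*}
so it suffices to prove the $P$-independent lower bound $\diam(G/\bar H_P,\,g_I|_{\bar\fh_P^\perp})\geq C_1'$. Note that this $\diam$ is automatically positive because, by definition of $\ell(P)$, the set $\HH_{P,\ell(P)-1}$ is not bracket generating, hence $\fh_P$ and $\bar\fh_P$ are proper subalgebras of $\fg$ (here one uses that a compact semisimple Lie group has no dense proper connected Lie subgroups, as invoked in Remark~\ref{rem3:max-subalgebras}).

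The key step is a containment. Applying the first bullet of Remark~\ref{rem3:max-subalgebras} to the proper subalgebra $\fh_P$, there exist an index $i$ and an element $a\in G$ with $\fh_P\subseteq\Ad_a(\fh_i)$. Since $H_i$ is closed (second bullet of Remark~\ref{rem3:max-subalgebras}) and $\Ad_a$ is inner, $\Ad_a(H_i)$ is a closed subgroup of $G$ with Lie algebra $\Ad_a(\fh_i)$ that contains the connected group $H_P$, and therefore contains $\bar H_P$. Passing to Lie algebras yields $\bar\fh_P\subseteq\Ad_a(\fh_i)$, and taking orthogonal complements with respect to the $\Ad(G)$-invariant inner product $g_I$ gives $\bar\fh_P^\perp\supseteq \Ad_a(\fh_i^\perp)$.

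From this inclusion, the positive semi-definite symmetric bilinear form $g_I|_{\bar\fh_P^\perp}^*$ dominates $g_I|_{\Ad_a(\fh_i^\perp)}^*$ pointwise on $\fg$. Lemma~\ref{lem2:sing-diam-monotonicity}, $\Ad$-invariance of $g_I$, and Lemma~\ref{lem2:diam(G/d)=diam(G/H)} then yield
\begin{equation*}
\diam(G,g_I|_{\bar\fh_P^\perp}^*) \geq \diam(G,g_I|_{\Ad_a(\fh_i^\perp)}^*) = \diam(G,g_I|_{\fh_i^\perp}^*) = \diam(G/H_i,\,g_I|_{\fh_i^\perp}) \geq C_1',
\end{equation*}
and Lemma~\ref{lem2:diam(G/d)=diam(G/H)} also identifies the left-hand side with $\diam(G/\bar H_P,\,g_I|_{\bar\fh_P^\perp})$. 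This combined with the estimate from Theorem~\ref{thm3:diam(g_PQD)} gives \eqref{eq3:diamP-geq-independent}. Finally, $C_1'>0$ because each $H_i$ is a proper subgroup, so $g_I|_{\fh_i^\perp}$ induces a nontrivial Riemannian metric on $G/H_i$.

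I do not anticipate a serious obstacle: the argument is essentially a bookkeeping of earlier results, with the only nontrivial input being that maximal proper subalgebras of a compact semisimple $\fg$ form finitely many $\Ad(G)$-orbits, which has already been recorded as Lemma~\ref{lem3:max-subalgebras}/Remark~\ref{rem3:max-subalgebras}. The mildly delicate point is passing from $\fh_P$ to $\bar\fh_P$, which is handled by invoking the closedness of the $H_i$ to conclude that $\Ad_a(H_i)$ already contains the closure $\bar H_P$.
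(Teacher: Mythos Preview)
Your proof is correct and follows essentially the same route as the paper: invoke Theorem~\ref{thm3:diam(g_PQD)}, then use Remark~\ref{rem3:max-subalgebras} together with Lemmas~\ref{lem2:sing-diam-monotonicity} and \ref{lem2:diam(G/d)=diam(G/H)} to bound $\diam(G/\bar H_P,g_I|_{\bar\fh_P^\perp})$ below by $C_1'$. The only cosmetic difference is that the paper applies Remark~\ref{rem3:max-subalgebras} directly to the proper subalgebra $\bar\fh_P$ (using semisimplicity to ensure $\bar\fh_P\neq\fg$), whereas you apply it to $\fh_P$ and then deduce $\bar\fh_P\subseteq\Ad_a(\fh_i)$ from the closedness of $H_i$; both reach the same inclusion.
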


\begin{proof}
Fix any $P\in\Ot(m)$. 
Since $G$ is semisimple, $\bar\fh_P\neq\fg$, thus $\bar\fh_P\subset \Ad_a(\fh_i)$ for some $i$ and $a\in G$ (see Remark~\ref{rem3:max-subalgebras}). 
We have that
\begin{multline*}
\diam(G/\bar H_P,g_I|_{\bar\fh_P^\perp}) = \diam (G, g_I|_{\bar\fh_P^\perp}^*) \geq 
\diam (G, g_I|_{\Ad_a(\fh_i^\perp)}^*) 
\\ =\diam (G,g_I|_{\fh_i^\perp}^*) = \diam (G/H_i,g_I|_{\fh_i^\perp})
\end{multline*}
In fact, the first and last equality follow from Lemma~\ref{lem2:diam(G/d)=diam(G/H)}, the second equality follows since $(G,g_I|_{\Ad_a(\fp_i)}^*)$ and $(G,g_I|_{\fp_i}^*)$ are isometric because $g_I$ is $\Ad_a$-invariant, and the inequality follows from Lemma~\ref{lem2:sing-diam-monotonicity} since $\bar\fh_P^\perp\supset \Ad_a(\fh_i^\perp)$.
\end{proof}

\begin{remark}
There should not exist an upper bound for $\diam(G,g_A)$ for all $g_A\in\mathcal M^G(P)$ independent on $P$ analogous to \eqref{eq3:diamP-geq-independent}. 
This is because $\diam(G,\HH_{P,\ell(P)},g_I|_{\HH_{P,\ell(P)}})$ may not be bounded by above uniformly for all $P\in\Ot(m)$. 
For instance, the author expects that, if a sequence $P_j\in\Ot(m)$ for $j\in\N$ converging to $P_0\in\Ot(m)$ satisfies that $\ell(P_j)$ is constant and $\ell(P_j)<\ell(P_0)$, then
$$
\lim_{j\to\infty} \diam(G,\HH_{P_j,\ell(P_j)},g_I|_{\HH_{P_j,\ell(P_j)}}) = \diam(G,\HH_{P_0,\ell(P_0)},g_I|_{\HH_{P_0,\ell(P_0)}})=\infty. 
$$
\end{remark}

\section{Eigenvalue estimates} \label{sec:eigenvalues}
We continue assuming that $G$ is a compact connected Lie group of dimension $m$. 
This section considers estimates for the first non-zero eigenvalue of the Laplace--Beltrami operator associated to $(G,g_A)$ (for $A\in\GL(m,\R)$) in terms of the eigenvalues of $AA^t$.
We will proceed analogously to the previous section. 

We will use the correspondence $\GL(m,\R)\ni A\mapsto g_A \in \mathcal M^G$ introduced in Subsection~\ref{subsec:left-invmetrics}, as well as the abstract description of the spectrum of the Laplace--Beltrmai operator $\Delta_A$ associated to $(G,g_A)$ in Subsection~\ref{subsec:spectraleft-invmetrics}.

\subsection{Simple estimates for the first eigenvalue}
We have seen in \eqref{eq2:A^tAprimero}, for any $A\in\GL(m,\R)$, that $\sigma_m(A)^2I\leq AA^t\leq \sigma_1(A)^2I$. 
Recall from Notation~\ref{not2:sigma_j(A)} that $\sigma_1(A)^2$ and $\sigma_m(A)^2$ stand for the largest and smallest eigenvalues of $AA^t$ respectively. 
The estimates
\begin{equation}\label{eq4:lambda1-simple-estimates}
\lambda_1(G,g_I)\, \sigma_m(A)^2
\leq \lambda_1(G,g_A)  \leq 
\lambda_1(G,g_I)\, \sigma_1(A)^2
\end{equation}
follow immediately form the next result.

\begin{lemma} \label{lem4:lambda1-inequality}
Let $A,B\in \GL(m,\R)$ satisfying $AA^t\leq BB^t$. 
	Then $\pi(-C_A)\leq \pi(-C_B)$ for every finite dimensional unitary representation $\pi$ of $G$. 
	Moreover, 
	$$
	\lambda_1(G,g_A)\leq \lambda_1(G,g_B).
	$$
\end{lemma}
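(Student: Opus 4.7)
The plan is to reduce the first (operator) inequality to the key formula $C_A = \sum_{i,j}(AA^t)_{i,j}X_iX_j$ derived in Subsection~\ref{subsec:spectraleft-invmetrics}, which converts the matrix inequality $AA^t\leq BB^t$ into an inequality between elements of $U(\fg)$ after applying a representation.

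First I would write
\begin{equation*}
C_B - C_A = \sum_{i,j=1}^m (BB^t-AA^t)_{i,j}\,X_iX_j.
\end{equation*}
Since $BB^t-AA^t$ is positive semi-definite and symmetric, it factors as $MM^t$ for some real $m\times m$ matrix $M$. Substituting and regrouping as in the derivation of $C_A = \sum_j X_j(A)^2$, I get
\begin{equation*}
C_B - C_A = \sum_{k=1}^m Y_k^2, \qquad\text{where } Y_k := \sum_{i=1}^m M_{i,k}X_i \in \fg.
\end{equation*}
Applying $\pi$ and using that $\pi(Y_k)$ is skew-hermitian (hence $-\pi(Y_k)^2\geq 0$), exactly as recalled at the beginning of Subsection~\ref{subsec:spectraleft-invmetrics} for the operators $\pi(-C_A)$, I obtain
\begin{equation*}
\pi(-C_A) - \pi(-C_B) = -\sum_{k=1}^m \pi(Y_k)^2 \cdot (-1) = \sum_{k=1}^m \pi(Y_k)^2 \leq 0,
\end{equation*}
i.e.\ $\pi(-C_A)\leq \pi(-C_B)$ as self-adjoint operators on $V_\pi$.

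For the second assertion I would invoke the min-max characterization: if $S\leq T$ are self-adjoint on a finite-dimensional inner product space, then $\lambda_{\min}(S)\leq \lambda_{\min}(T)$ (since $\langle Sv,v\rangle\leq \langle Tv,v\rangle$ for every unit vector $v$). Applying this to $S=\pi(-C_A)$ and $T=\pi(-C_B)$ for each non-trivial $\pi\in\widehat G$ and taking the minimum over $\pi$, the expression \eqref{eq2:lambda1(G,g_A)} yields $\lambda_1(G,g_A)\leq \lambda_1(G,g_B)$. No step is a real obstacle here; the only point deserving a brief justification is the passage from the matrix inequality $AA^t\leq BB^t$ to the operator inequality via a Cholesky-type factorization, which is standard.
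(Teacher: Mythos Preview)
Your proof is correct and follows essentially the same route as the paper: both write $C_B-C_A=\sum_{i,j}(BB^t-AA^t)_{i,j}X_iX_j$, factor the positive semi-definite matrix $BB^t-AA^t$ (the paper via the spectral decomposition $PD^2P^t$, you via a square root $MM^t$), rewrite the difference as a sum of squares $\sum_k Y_k^2$ in $U(\fg)$, and conclude using that $-\pi(Y_k)^2\geq 0$; the passage to $\lambda_1$ via \eqref{eq2:lambda1(G,g_A)} is identical. The displayed line ``$\pi(-C_A)-\pi(-C_B)=-\sum_k\pi(Y_k)^2\cdot(-1)=\sum_k\pi(Y_k)^2\leq 0$'' is awkwardly written (the ``$\cdot(-1)$'' is superfluous), but the identity $\pi(-C_A)-\pi(-C_B)=\pi(C_B-C_A)=\sum_k\pi(Y_k)^2\leq 0$ and the conclusion are correct.
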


\begin{proof}
	Let $(\pi,V_\pi)$ be any finite dimensional unitary representation of $G$. 
	Since $AA^t\leq BB^t$, there is $P\in\Ot(m)$ such that $0\leq BB^t-AA^t= PD^2P^t$ for some $D=\diag(d_1,\dots,d_m)$ with $d_j\in\R$ for all $j$. 
	This implies that $BB^t=AA^t+PD^2P^t$, thus 
	$$
	C_B = \sum_{i,j=1}^m (BB^t)_{i,j}\, X_iX_j
	=\sum_{i,j=1}^m \big((AA^t)_{i,j} + (PD^2P^t)_{i,j}\big) \, X_iX_j
	= C_A+C_{PD}. 
	$$
Consequently, $\pi(-C_B)=\pi(-C_A)+\pi(-C_{PD})\geq \pi(-C_A)$ since $\pi(-C_{PD})\geq0$.
	In fact, Lemma~\ref{lem2:C_AB} gives 
	$
	C_{PD} 
	= \sum_{i,j=1}^m (DD^t)_{i,j} \, X_i(P)X_j(P) 
	= \sum_{j=1}^m d_j^2 \, X_j(P)^2,
	$ 
	thus 
	$
	\pi(-C_{PD}) 
	= \sum_{j=1}^m d_j^2 \, \pi(-X_j(P)^2) 
	= -\sum_{j=1}^m d_j^2 \, \pi(X_j(P))^2 \geq0. 
	$

We now show the second assertion. 
For any $\pi\in \widehat G$, we have seen that $\pi(-C_A)\leq \pi(-C_B)$, in particular, $\lambda_{\min}(\pi(-C_A))\leq  \lambda_{\min}(\pi(-C_B))$. 
	Hence, \eqref{eq2:lambda1(G,g_A)} immediately implies that $\lambda_1(G,g_A)\leq \lambda_1(G,g_B)$. 
\end{proof}

\begin{remark}\label{rem4:Urakawa}
Notice the right hand side of \eqref{eq4:lambda1-simple-estimates} improves the following  estimate by Urakawa (see \cite[Thm.~ 3]{Urakawa79}):
\begin{equation}\label{eq4:Urakawa}
\lambda_1(G,g_A)\leq \lambda_1(G,g_I)\, \Tr(AA^t)
\qquad\text{for all $A\in\GL(m,\R)$.}
\end{equation}
In fact, $\Tr(AA^t)=\sum_{j=1}^m \sigma_j(A)^2> \sigma_1(A)^2$ for any $A\in\GL(m,\R)$. 
Moreover, the estimates in \eqref{eq4:lambda1-simple-estimates} are sharp in the sense that they are attained when $A$ is a positive multiple of $I$. 
\end{remark}

In the next subsections we look for estimates as in \eqref{eq4:lambda1-simple-estimates} with the index $m$ (resp.\ $1$) at the left-hand side (reps.\ right-hand side) replaced by an index $k$ as small (resp.\ large) as possible.

\subsection{Main tool for the first eigenvalue}

Proposition~\ref{prop4:lambda1-k} will be the main tool in the rest of the section. 
We need some preliminaries to state it beside those in Subsection~\ref{subsec:spec-non-Riemannian}. 

For any $P\in\Ot(m)$ and $1\leq k \leq m$, we set 
\begin{equation}
C_{P,k}^\infty(G) =\{f\in C^\infty(G): X\cdot f=0 \;\text{for all } X\in\HH_{P,k-1}\}.
\end{equation} 
We recall from Notation~\ref{not2:HH_P^kCC_P^k} that $\HH_{P,k-1}=\Span_\R\{X_1(P),\dots,X_{k-1}(P)\}$. 
By \eqref{eq2:PeterWeyl}, the closure of $C_{P,k}^\infty(G)$ in the Hilbert space $L^2(G)$ is given by
\begin{equation}\label{eq4:closureC_Pk^infty}
\op{closure}(C_{P,k}^\infty(G)) = \bigoplus_{\pi\in\widehat G} V_\pi^{\HH_{P,k-1}} \otimes V_\pi^*,
\end{equation}
where $V_\pi^{\HH_{P,k-1}}=\{ v\in V_\pi: \pi(X)\cdot v=0\text{ for all }X\in\HH_{P,k-1}\}$. 
In particular, the Laplace--Beltrami operator $\Delta_I$ of $(G,g_I)$ preserves $C_{P,k}^\infty(G)$. 
Whenever $C_{P,k}^\infty(G)$ has dimension strictly greater than one, we denote by  $\lambda_1(\Delta_I|_{C_{P,k}^\infty(G)})$ the smallest positive eigenvalue of $\Delta_I|_{C_{P,k}^\infty(G)}$.
Furthermore, every eigenfunction of $\Delta_I|_{C_{P,k}^\infty(G)}$ is of the form $f_{v\otimes \varphi}$ with $v\in V_\pi^{\HH_{P,k-1}}$ an eigenvector of $\pi(-C_I)|_{V_\pi^{\HH_{P,k-1}}}$. 
Consequently, 
\begin{equation}\label{eq4:spec_I|C_Pk}
\Spec (\Delta_I|_{C_{P,k}^\infty(G)}) = \bigcup_{\pi\in\widehat G} \big\{\!\big\{ \rule{-4mm}{0mm}
\underbrace{\lambda^{\pi},\dots, \lambda^{\pi}}_{(d_\pi\dim V_\pi^{\HH_{P,k-1}})\text{-times}} 
\rule{-4mm}{0mm}\big\}\!\big\}.
\end{equation}
Recall from Remark~\ref{rem2:C_I} that $\lambda^\pi$ is determined by $\pi(-C_I) =  \lambda^\pi\, \Id_{V_\pi}$.
When $C_{P,k}^\infty(G)$ contains only constant functions on $G$ (e.g.\ if $\HH_{P,k-1}$ is bracket generating because $\dim V_\pi^{\HH_{P,k-1}}=0$ for all $\pi\in\widehat G$), we set $\lambda_1(\Delta_I|_{C_{P,k}^\infty(G)})=\infty$ by convention.

\begin{proposition}\label{prop4:lambda1-k}
Let $A\in \GL(m,\R)$.
For any $P$ sorting $A$ (see Notation~\ref{not2:sigma_j(A)}) and any index $1\leq k\leq m$, we have that
\begin{equation}\label{eq3:lambda_1(A)}
\lambda_1(G,\HH_{P,k},g_I|_{\HH_{P,k}}) \; \sigma_k(A)^2
\leq \lambda_1(G,g_A) \leq 
\lambda_1(\Delta_I|_{C_{P,k}^\infty(G)})\; \sigma_k(A)^2 . 
\end{equation}
\end{proposition}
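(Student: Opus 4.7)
The plan is to mirror the structure of the diameter proof (Proposition~\ref{prop3:diam-k}): sandwich $AA^t$ between $\sigma_k(A)^2$ times two auxiliary matrices $B_1B_1^t$ and $B_2B_2^t$ which ``freeze'' either the first $k-1$ or the last $m-k$ coordinates, and then combine Lemma~\ref{lem4:lambda1-inequality} with the homothety behavior $\lambda_1(G,g_{tB})=t^2\lambda_1(G,g_B)$. Writing $\sigma_j=\sigma_j(A)$ and choosing $P\in\Ot(m)$ that sorts $A$, I put
\[
B_1=P\,\diag\!\bigl(\tfrac{\sigma_1}{\sigma_k},\dots,\tfrac{\sigma_{k-1}}{\sigma_k},1,\dots,1\bigr),\qquad
B_2=P\,\diag\!\bigl(1,\dots,1,\tfrac{\sigma_{k+1}}{\sigma_k},\dots,\tfrac{\sigma_m}{\sigma_k}\bigr),
\]
so that $\sigma_k^2\,B_2B_2^t\leq AA^t\leq \sigma_k^2\,B_1B_1^t$ exactly as in the proof of Proposition~\ref{prop3:diam-k}. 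Lemma~\ref{lem4:lambda1-inequality} then gives $\sigma_k^2\lambda_1(G,g_{B_2})\leq \lambda_1(G,g_A)\leq \sigma_k^2\lambda_1(G,g_{B_1})$, and it only remains to relate $\lambda_1(G,g_{B_i})$ to the two non-Riemannian quantities.

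For the lower bound, I would use Lemma~\ref{lem2:C_AB} (together with the fact that $\{X_1(P),\dots,X_m(P)\}$ is $g_I$-orthonormal) to write
\[
C_{B_2}=\sum_{j=1}^{k} X_j(P)^2 + \sum_{j=k+1}^{m}\Bigl(\tfrac{\sigma_j}{\sigma_k}\Bigr)^{\!2} X_j(P)^2
= C_{(\HH_{P,k},g_I|_{\HH_{P,k}})} + \sum_{j=k+1}^{m}\Bigl(\tfrac{\sigma_j}{\sigma_k}\Bigr)^{\!2} X_j(P)^2.
\]
In any unitary representation $\pi$ each $\pi(-X_j(P)^2)$ is positive semidefinite, hence $\pi(-C_{B_2})\geq \pi(-C_{(\HH_{P,k},g_I|_{\HH_{P,k}})})$ and consequently $\lambda_{\min}(\pi(-C_{B_2}))\geq \lambda_{\min}(\pi(-C_{(\HH_{P,k},g_I|_{\HH_{P,k}})}))$ for every $\pi$. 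Taking the minimum over nontrivial $\pi\in\widehat G$ and comparing \eqref{eq2:lambda1(G,g_A)} with \eqref{eq2:lambda_1(G,H,g_I|H)} yields $\lambda_1(G,g_{B_2})\geq \lambda_1(G,\HH_{P,k},g_I|_{\HH_{P,k}})$, which gives the left inequality of \eqref{eq3:lambda_1(A)}.

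For the upper bound I want to exhibit explicit eigenfunctions of $\Delta_{B_1}$ whose eigenvalues realize $\lambda_1(\Delta_I|_{C_{P,k}^\infty(G)})$. The key observation is that if $\pi\in\widehat G$ is nontrivial and $v\in V_\pi^{\HH_{P,k-1}}$ is nonzero, then $\pi(X_j(P))v=0$ for $j<k$, so
\[
\pi(-C_{B_1})v
= -\sum_{j=1}^{k-1}\Bigl(\tfrac{\sigma_j}{\sigma_k}\Bigr)^{\!2}\pi(X_j(P))^2 v -\sum_{j=k}^{m}\pi(X_j(P))^2 v
= \pi\!\Bigl(-\sum_{j=k}^{m}X_j(P)^2\Bigr)v,
\]
and since $\sum_{j=1}^{m}X_j(P)^2=C_I$, a further application of $\pi(X_j(P))v=0$ for $j<k$ reduces this to $\pi(-C_I)v=\lambda^\pi v$. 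Thus $f_{v\otimes\varphi}$ is a non-constant eigenfunction of $\Delta_{B_1}$ with eigenvalue $\lambda^\pi$ for every such $(\pi,v)$; minimizing over nontrivial $\pi$ with $V_\pi^{\HH_{P,k-1}}\neq 0$ and invoking \eqref{eq4:spec_I|C_Pk} yields $\lambda_1(G,g_{B_1})\leq \lambda_1(\Delta_I|_{C_{P,k}^\infty(G)})$, which finishes the right inequality. The computation giving $\pi(-C_{B_1})v=\lambda^\pi v$ on $V_\pi^{\HH_{P,k-1}}$ is the main (though modest) technical point; the rest is a direct recycling of the matrix-sandwich argument already used for the diameter.
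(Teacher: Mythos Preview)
Your proof is correct and follows essentially the same route as the paper: the same matrix sandwich $\sigma_k^2 B_2B_2^t\leq AA^t\leq \sigma_k^2 B_1B_1^t$ (your $B_1,B_2$ are the paper's $PD_2,PD_1$), the same use of Lemma~\ref{lem4:lambda1-inequality}, and the same operator comparison for the lower bound. For the upper bound the paper uses the Rayleigh quotient $\lambda_{\min}(\pi(-C_{B_1}))\leq \langle\pi(-C_{B_1})v_0,v_0\rangle=\lambda^{\pi}$, whereas you observe directly that $v\in V_\pi^{\HH_{P,k-1}}$ is an actual eigenvector of $\pi(-C_{B_1})$ with eigenvalue $\lambda^\pi$; this is a slightly cleaner formulation of the same computation. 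The only omission is the trivial case where $C_{P,k}^\infty(G)$ consists of constants alone (so $\lambda_1(\Delta_I|_{C_{P,k}^\infty(G)})=\infty$ by convention), in which the right inequality holds vacuously.
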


\begin{proof}
Throughout the proof we abbreviate $\sigma_j=\sigma_j(A)$ for any $j$, i.e.\ $\sigma_1^2\geq \dots\geq \sigma_m^2$ are the eigenvalues of $AA^t$.
Fix $P\in \Ot(m)$ sorting $A$, that is, $AA^t=PD^2P^t$ with $D=\diag(\sigma_1,\dots,\sigma_m)$.
	
We have seen in the proof of Proposition~\ref{prop3:diam-k} that 
\begin{equation*}
\sigma_k^2 \, PD_1^2P^t\leq AA^t\leq \sigma_k^2 \, PD_2^2P^t
\end{equation*}
where 
$
D_1 =\diag( {1,\dots,1}, {\tfrac{\sigma_{k+1}}{\sigma_k},\dots, \tfrac{\sigma_m}{\sigma_k}})
$ and $
D_2 = \diag( {\tfrac{\sigma_1}{\sigma_k},\dots, \tfrac{\sigma_{k-1}}{\sigma_k}}, {1,\dots,1}).
$
It follows from Lemma~\ref{lem4:lambda1-inequality} that 
\begin{equation*}
	\sigma_k^2\, \lambda_1(G,g_{PD_1}) 
	= \lambda_1(G,g_{\sigma_k PD_1}) 
	\leq \lambda_1(G,g_A)\leq 
	\lambda_1(G,g_{\sigma_k PD_2}) =
	\sigma_k^2\, \lambda_1(G,g_{PD_2}). 
\end{equation*}
It remains to show that 
	\begin{align}\label{eq4:claim-prop-lambda_1k}
		\lambda_1(G,\HH_{P,k},g_I|_{\HH_{P,k}})
		\leq \lambda_1(G,g_{PD_1})
		\qquad\text{ and }\qquad
		\lambda_1(G,g_{PD_2})\leq \lambda_1(\Delta_I|_{C_{P,k}^\infty(G)}).
	\end{align}
	
From Lemma~\ref{lem2:C_AB}, we have that 
\begin{align*}
	C_{PD_1} = \sum_{i=1}^{k} X_i(P)^2 + \sum_{i=k+1}^m (\tfrac{\sigma_i}{\sigma_k})^2\, X_i(P)^2.
\end{align*}
Let $(\pi,V_\pi) \in\widehat G$ non-trivial. 
We abbreviate $C_{(P,k)}= C_{(\HH_{P,k},g_I|_{\HH_{P,k}})}$ (see Subsection~\ref{subsec:spec-non-Riemannian}).
Since $\pi(-X_i(P)^2):V_\pi\to V_\pi$ is positive semi-definite for every $i$, we obtain that 
\begin{align*}
\pi(-C_{(P,k)}) = \sum_{i=1}^{k} \pi(-X_i(P)^2) \leq \pi(-C_{PD_1}).
\end{align*}
Consequently, $\lambda_{\min}(\pi(-C_{(P,k)})) \leq \lambda_{\min}(\pi(-C_{PD_1}))$, thus the first inequality in \eqref{eq4:claim-prop-lambda_1k} follows by \eqref{eq2:lambda1(G,g_A)} and \eqref{eq2:lambda_1(G,H,g_I|H)}.

We now establish the inequality at the right-hand side in \eqref{eq4:claim-prop-lambda_1k}. 
We assume that the dimension of $C^\infty_{P,k}(G)$ is greater than one, otherwise the assertion follows trivially. 
From \eqref{eq4:spec_I|C_Pk}, it suffices to show that $\lambda_1(G,g_{PD_2})\leq \lambda^\pi$ for all $\pi\in\widehat G$ satisfying that $\dim V_\pi^{\HH_{P,k-1}}>0$. 
	
Let $\pi_0\in\widehat G$ satisfying $V_{\pi_0}^{\HH_{P,k-1}}\neq 0$ and let $v_0\in V_{\pi_0}^{\HH_{P,k-1}}$ with $\langle v_0, v_0\rangle_{\pi_0}=1$. 
By Lemma~\ref{lem2:C_AB}, 
\begin{align*}
	C_{PD_2} = \sum_{i=1}^{k-1} (\tfrac{\sigma_i}{\sigma_k})^2\,  X_i(P)^2 + \sum_{i=k}^m  X_i(P)^2.
\end{align*}
Note that $\pi_0(X_i(P))v_0=0$ for all $1\leq i\leq k-1$. 
Hence
\begin{multline*}
\lambda_1(G,g_{PD_2})
		\leq \; \lambda_{\min}(\pi_0(-C_{PD_2})) 
		= \min_{v\in V_{\pi_0}:\, \langle v,v\rangle_{\pi_0}=1} \langle \pi_0(-C_{PD_2})v,v \rangle_{\pi_0} \\
		\leq \langle \pi_0(-C_{PD_2})v_0,v_0\rangle_{\pi_0}
		=\sum_{j=k}^m \langle \pi_0(-X_j(P)^2)v_0,v_0\rangle_{\pi_0} \\
		\quad = \sum_{j=1}^m \langle \pi_0(-X_j(P)^2)v_0,v_0\rangle_{\pi_0} 
		=\langle \pi_0(-C_I)v_0,v_0\rangle_{\pi_0} = \lambda^{\pi_0},
\end{multline*}
and the proof is complete. 
\end{proof}

\begin{remark}
	One can check that $f\in C_{P,k}^\infty(G)$ if and only if it is annihilated by any element in the Lie subalgebra $\fh_{P,k-1}$ of $\fg$ generated by $\HH_{P,k-1}$, which is equivalent of being invariant by the closure of the only connected subgroup of $G$ with Lie algebra $\fh_{P,k-1}$.
In particular, every $f\in C_{P,\ell(P)}^\infty(G)$ is invariant by $\bar H_P$ (see Definition~\ref{def3:bracket-generating-indexA}), thus it induces a smooth function on the homogeneous space $G/\bar H_P$. 
Consequently, 
\begin{equation}
\lambda_1(\Delta_I|_{C_{P,k}^\infty(G)}) \leq \lambda_1(\Delta_I|_{C^\infty(G)^{\bar H_P}}) = \lambda_1(G/\bar H_P,g_I|_{\bar\fh_P^\perp}),
\end{equation} 
and $\lambda_1(\Delta_I|_{C_{P,k}^\infty(G)})<\infty$ if and only if $\bar H_P\neq G$ by Remark~\ref{rem2:G/H}. 
\end{remark}

\subsection{Optimal indices for the first eigenvalue} \label{subsec:lambda1-optimal}
We next obtain the analogous results as in Subsection~\ref{subsec:diam-optimal}.

\begin{theorem}\label{thm4:optimal-index-upper-bound}
	Assume that $G$ is non-abelian. 
	There is a positive real number $C_4$ depending on $G$ and $g_I$ such that 
	\begin{equation}\label{eq4:optimalupper}
		\lambda_1(G,g_A) \leq 
		C_4\, \sigma_{2}(A)^2
\qquad\text{for all $A\in\GL(m,\R)$.}
	\end{equation}
Moreover, 
	\begin{equation}
		\label{eq4:lambda1-optimal-non-true-upperbound}
		\sup_{A\in\GL(m,\R) } \frac{\lambda_1(G,g_A)} {\sigma_{4}(A)^2} =\infty.
	\end{equation}
	Consequently, the largest index $k$ satisfying that $\sup_{A\in\GL(m,\R)} \lambda_1(G,g_A)/ \sigma_{k}(A)^2<\infty$ is $2$ or $3$. 
\end{theorem}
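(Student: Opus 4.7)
The plan is to apply Proposition~\ref{prop4:lambda1-k} at $k=2$: for every $A\in\GL(m,\R)$ and every $P\in\Ot(m)$ sorting $A$,
\begin{equation*}
\lambda_1(G,g_A) \;\leq\; \lambda_1\!\left(\Delta_I|_{C_{P,2}^\infty(G)}\right)\,\sigma_2(A)^2 .
\end{equation*}
The task thus reduces to exhibiting a positive constant $C_4$, depending only on $G$ and $g_I$, that bounds the first factor uniformly in $P$.

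To that end, I would fix once and for all a maximal torus $T\subset G$ with Lie algebra $\ft$ and take $C_4 := \lambda_1(G/T, g_I|_{\ft^\perp})$; this is positive and finite since $G$ is non-abelian, so $0<\dim\ft<m$. For any $P\in\Ot(m)$, the non-zero vector $X_1(P)\in\fg$ lies in the Lie algebra $\ft_a$ of some maximal torus $T_a=aTa^{-1}$. As $T_a$ contains the one-parameter subgroup $\{\exp(tX_1(P))\}_{t\in\R}$, every right-$T_a$-invariant smooth function on $G$ is annihilated by $X_1(P)$, yielding the inclusion of $\Delta_I$-invariant subspaces $C^\infty(G/T_a)\subset C_{P,2}^\infty(G)$. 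Since a larger $\Delta$-invariant subspace has a smaller first positive eigenvalue,
\begin{equation*}
\lambda_1\!\left(\Delta_I|_{C_{P,2}^\infty(G)}\right) \;\leq\; \lambda_1(G/T_a,g_I|_{\ft_a^\perp}) \;=\; \lambda_1(G/T,g_I|_{\ft^\perp}) \;=\; C_4,
\end{equation*}
the middle equality holding because conjugation by $a$ is an isometry from $(G/T,g_I|_{\ft^\perp})$ to $(G/T_a,g_I|_{\ft_a^\perp})$ by $\Ad(G)$-invariance of $g_I$.

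For the blow-up \eqref{eq4:lambda1-optimal-non-true-upperbound}, I would invoke \eqref{eq3:diam-optimal-non-true-lowerbound} from Theorem~\ref{thm3:optimal-index-lower-bound}, which supplies a sequence $A_n\in\GL(m,\R)$ with $\diam(G,g_{A_n})\,\sigma_4(A_n)\to 0$. Combined with Peter Li's estimate \eqref{eq1:Li-estimate},
\begin{equation*}
\frac{\lambda_1(G,g_{A_n})}{\sigma_4(A_n)^2} \;\geq\; \frac{\pi^2/4}{\left(\diam(G,g_{A_n})\,\sigma_4(A_n)\right)^2}\;\longrightarrow\;\infty.
\end{equation*}
The final consequence is immediate: $\sigma_k(A)^{-2}$ is non-decreasing in $k$, so $\lambda_1(G,g_A)/\sigma_k(A)^2$ is non-decreasing in $k$ for fixed $A$; finiteness of the supremum at $k=2$ together with its infiniteness at $k=4$ forces the largest admissible index into $\{2,3\}$.

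I expect the most delicate bookkeeping to be the inequality $\lambda_1(\Delta_I|_{C_{P,2}^\infty(G)})\leq \lambda_1(G/T_a,g_I|_{\ft_a^\perp})$, which one verifies using the Peter--Weyl description \eqref{eq4:closureC_Pk^infty}: both subspaces decompose as orthogonal sums $\bigoplus_{\pi}V_\pi^{(\cdot)}\otimes V_\pi^*$, and the eigenvalues on each summand are identified via \eqref{eq4:spec_I|C_Pk} with the scalars $\lambda^\pi$ by which $\pi(-C_I)$ acts. Everything else is a direct application of the earlier machinery, mirroring the diameter argument in Theorem~\ref{thm3:optimal-index-lower-bound}.
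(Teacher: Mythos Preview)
Your proposal is correct and follows essentially the same route as the paper: apply Proposition~\ref{prop4:lambda1-k} at $k=2$, bound $\lambda_1(\Delta_I|_{C_{P,2}^\infty(G)})$ uniformly by $\lambda_1(G/T,g_I|_{\ft^\perp})$ via the inclusion $C^\infty(G)^{aTa^{-1}}\subset C_{P,2}^\infty(G)$ coming from $X_1(P)\in\Ad_a(\ft)$, and deduce \eqref{eq4:lambda1-optimal-non-true-upperbound} from Peter Li's inequality together with \eqref{eq3:diam-optimal-non-true-lowerbound}. The paper's proof is identical in structure and in the choice of $C_4$.
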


\begin{proof}
	We first note that \eqref{eq4:lambda1-optimal-non-true-upperbound} follows immediately from 
	\eqref{eq4:PeterLi} and \eqref{eq3:diam-optimal-non-true-lowerbound}.

	Let $T$ be a maximal torus in $G$ with Lie algebra $\ft$. 
	We have that $\ft \neq \fg$ by assumption. 
	We claim that \eqref{eq4:optimalupper} holds by setting $C_4= \lambda_1(G/T, g_I|_{\ft^{\perp}})$. 
	
	We fix $A\in \GL(m,\R)$.
	Let $P$ be any matrix in $\Ot(m)$ sorting $A$.
	Write $\mathcal C=\mathcal C_{P,2} = \Span_\R\{ X_2(P),\dots,X_m(P) \}$.
	By Proposition~\ref{prop4:lambda1-k}, we have that 
	$$
	\lambda_1(G,g_A)\leq \lambda_1(\Delta_I|_{C_{P,2}^\infty(G)}) \;\sigma_2(A)^2.
	$$

	The subspace $\Span_\R\{X_1(P)\}$ of $\fg$ is of course an abelian subalgebra of $\fg$.
	Since any two maximal abelian subalgebras of $\fg$ are conjugate via $\Ad(G)$ (see for instance \cite[Thm.~4.34]{Knapp-book-beyond}), there is $a\in G$ such that $X_1(P) \in \Ad_a(\ft)$. 
	Then
	\begin{align*}
		\lambda_1(\Delta_I|_{C_{P,2}^\infty(G)}) \leq \lambda_1(\Delta_I|_{C^\infty(G)^{aTa^{-1}}}) = \lambda_1(\Delta_I|_{C^\infty(G)^{T}}) 
		= \lambda_1(G/T,g_I|_{\ft^\perp}),
	\end{align*}
	which completes the proof.
\end{proof}

\begin{remark}\label{rem4:flat-upper}
	When $G$ is abelian, one has that $\sup_{A\in\GL(m,\R) } {\lambda_1(G,g_A)}\, {\sigma_{2}(A)^{-2}} =\infty$ by using the same construction as in Remark~\ref{rem3:flat-lower}. 
	Consequently, the index $k=1$ at the right-hand side of \eqref{eq4:lambda1-simple-estimates} is optimal. 
\end{remark}

We recall from \eqref{eq3:k_max} that $k_{\max}= 1+\max_{H} \,\dim H$, where $H$ runs over the closed subgroups of $G$ with Lie algebra $\fh\neq\fg$.
One has that $k_{\max}=m$ if and only if $G$ is not semisimple.

\begin{theorem}\label{thm4:optimal-index-lower-bound}
We have that
\begin{equation}
	\label{eq4:lambda1-optimal-non-true-lowerbound}
	\inf_{A\in\GL(m,\R) } \frac{\lambda_1(G,g_A)} {\sigma_{k_{\max}-1}(A)^2} =0.
\end{equation}
Consequently, if $G$ is not semisimple, then $k_{\max}=m$ is the smallest index $k$ satisfying that $\inf_{A\in\GL(m,\R) }\lambda_1(G,g_A)/ \sigma_{k}(A)^2>0$. 

When $G$ is semisimple, if Condition~\ref{claim} holds\footnote{See the footnote in page 17 for an update.}, then there is a positive real number $C_3$ depending on $G$ and $g_I$ such that 
\begin{equation}\label{eq4:optimallower}
\lambda_1(G,g_A)\geq C_3\, \sigma_{k_{\max}}(A)^2 
\qquad\text{for all $A\in\GL(m,\R)$}.
\end{equation}
Consequently, $k_{\max}$ is the smallest index $k$ satisfying that $\inf_{A\in\GL(m,\R) }\lambda_1(G,g_A)/ \sigma_{k}(A)^2>0$. 

\end{theorem}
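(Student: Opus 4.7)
The plan is to proceed in three parts that parallel the structure of Theorem~\ref{thm3:optimal-index-upper-bound}.

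For \eqref{eq4:lambda1-optimal-non-true-lowerbound} I will exhibit an explicit shrinking family of metrics. Fix a closed connected subgroup $H\subsetneq G$ of maximal dimension $n:=k_{\max}-1$, let $\fh$ be its Lie algebra, and pick $P\in\Ot(m)$ so that $\{X_1(P),\dots,X_n(P)\}$ is a $g_I$-orthonormal basis of $\fh$. For $s\in(0,1]$, set
$$D_s:=\diag(\underbrace{1,\dots,1}_{n\text{ entries}},\underbrace{s,\dots,s}_{(m-n)\text{ entries}}),\qquad A_s:=PD_s.$$
Then $P$ sorts $A_s$ and the singular values are $1$ (repeated $n$ times) then $s$ (repeated $m-n$ times), so in particular $\sigma_{k_{\max}-1}(A_s)=1$. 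I will then apply the right-hand inequality of Proposition~\ref{prop4:lambda1-k} with $k=k_{\max}$: since $\HH_{P,k_{\max}-1}=\fh$ and $H$ is closed and connected, $C^\infty_{P,k_{\max}}(G)$ consists precisely of the smooth left-$H$-invariant functions, and its first non-constant Laplace eigenvalue coincides with $\lambda_1(H\ba G, g_I|_{\fh^\perp})$, which is finite by Remark~\ref{rem2:G/H}. This gives $\lambda_1(G,g_{A_s})\leq \lambda_1(H\ba G, g_I|_{\fh^\perp})\,s^2\to 0$ as $s\to 0^+$, as required.

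If $G$ is not semisimple, then codimension-one closed subgroups exist, forcing $k_{\max}=m$, and the simple estimate \eqref{eq4:lambda1-simple-estimates} already yields \eqref{eq4:optimallower} with $C_3=\lambda_1(G,g_I)$.

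For the semisimple case under Condition~\ref{claim}, the most direct route would be through the left-hand inequality of Proposition~\ref{prop4:lambda1-k}, which would require a uniform positive lower bound on the sub-Laplace eigenvalue $\lambda_1(G,\HH,g_I|_{\HH})$ as $\HH$ ranges over $\op{Gr}_\fg(k_{\max})$, i.e.\ an eigenvalue analogue of Condition~\ref{claim}. I will bypass this altogether by reducing to the diameter side already handled: Theorem~\ref{thm3:optimal-index-upper-bound} supplies $C_2>0$ with $\diam(G,g_A)\sigma_{k_{\max}}(A)\leq C_2$ for every $A\in\GL(m,\R)$, and Peter Li's estimate \eqref{eq1:Li-estimate}, applicable because $(G,g_A)$ is a compact homogeneous Riemannian manifold, then gives
$$\lambda_1(G,g_A)\geq \frac{\pi^2/4}{\diam(G,g_A)^2}\geq \frac{\pi^2}{4C_2^2}\,\sigma_{k_{\max}}(A)^2,$$
so one may take $C_3:=\pi^2/(4C_2^2)$. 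The final consequence — that $k_{\max}$ is the smallest admissible index — then follows immediately: the infimum is positive at $k=k_{\max}$ by the bound just obtained, and vanishes at $k=k_{\max}-1$ by the first part (hence at all smaller indices, since $\sigma_k(A)\geq\sigma_{k+1}(A)$ makes $k\mapsto\inf_A\lambda_1(G,g_A)/\sigma_k(A)^2$ nondecreasing). The main obstacle is therefore entirely inherited from the diameter side, namely the uniform upper bound on $\diam(G,g_A)\sigma_{k_{\max}}(A)$, which is where Condition~\ref{claim} enters; Li's inequality provides an essentially free bridge from that to the eigenvalue bound.
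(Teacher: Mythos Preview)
Your proof is correct and follows essentially the same route as the paper. For \eqref{eq4:lambda1-optimal-non-true-lowerbound} the paper carries out the representation-theoretic estimate directly (choosing $\pi_0$ with $V_{\pi_0}^H\neq 0$ and bounding $\langle \pi_0(-C_{PD_s})v_0,v_0\rangle$), whereas you invoke Proposition~\ref{prop4:lambda1-k} at $k=k_{\max}$ and identify $\lambda_1(\Delta_I|_{C^\infty_{P,k_{\max}}(G)})$ with $\lambda_1(G/H,g_I|_{\fh^\perp})$; these are the same computation packaged differently, and your version is arguably cleaner since it reuses the general machinery. The non-semisimple case and the semisimple case via Li's inequality and Theorem~\ref{thm3:optimal-index-upper-bound} match the paper exactly, including the explicit constant $C_3=\pi^2/(4C_2^2)$.
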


\begin{proof}
We first prove \eqref{eq4:lambda1-optimal-non-true-lowerbound}, in a very similar way as \eqref{eq3:diam-optimal-non-true-upperbound}.
Let $H$ be any proper closed subgroup of $G$ with Lie algebra $\fh\neq\fg$. 
Write $n=\dim(H)$. 
Clearly, $n\leq k_{\max}-1$ by \eqref{eq3:k_max}.
There is $P\in\Ot(m)$ such that $\fh = \Span_\R\{X_1(P),\dots, X_n(P)\}$. 
For $s>0$, let 
$$
D_s=\diag(\underbrace{1,\dots,1}_{n\text{-times}}, \underbrace{s,\dots,s}_{(m-n)\text{-times}}).
$$
One clearly has that $\sigma_n(PD_s)=1$ for all $s\leq 1$. 
We claim that 
$$
\lim_{s\to0^+} \;\frac{\lambda_1(G,g_{PD_s})}{\sigma_n(PD_s)^2}=
\lim_{s\to0^+} \;\lambda_1(G,g_{PD_s}) =0.
$$ 
Let $\pi_0$ be any irreducible representation of $G$ satisfying that $V_{\pi_0}^H\neq 0$.
We have that $\pi_0(X) v=0$ for all $v\in V_{\pi_0}^H$ and $X\in\fh$. 
Hence, if $v_0\in V_{\pi_0}^H$ with $v_0\neq 0$, then 
\begin{align*}
	\lambda_1(G,g_{PD_s})
	&\leq \; \lambda_{\min}(\pi_0(-C_{PD_s})) 
	= \min_{v\in V_{\pi_0}:\, \langle v,v\rangle_{\pi_0}=1} \langle \pi_0(-C_{PD_s})v,v \rangle_{\pi_0} \\
	&\leq \langle \pi_0(-C_{PD_s})v_0,v_0\rangle_{\pi_0}
	= s^2\sum_{j=n+1}^m \langle \pi_0(-X_j(P)^2)v_0,v_0\rangle_{\pi_0}.
\end{align*}
The proof of \eqref{eq4:lambda1-optimal-non-true-lowerbound} follows by taking $H$ of dimension $n=k_{\max}-1$. 

If $G$ is not semisimple, then $k_{\max}=m$, thus the right hand side of \eqref{eq4:lambda1-simple-estimates} gives already the optimal index. 

Peter Li's estimate \eqref{eq1:Li-estimate} restricted to left-invariant metrics on $G$ gives 
\begin{equation}\label{eq4:PeterLi}
	\frac{\pi^2}4\leq \lambda_1(G,g_A)\diam(G,g_A)^2 
\qquad\text{for all $A\in\GL(m,\R)$.}
\end{equation}
If $G$ is semisimple and Condition~\ref{claim} holds, then  $\diam(G,g_A)\leq \sigma_{k_{\max}}(A)^{-1} C_{2}$ for all $A\in \GL(m,\R)$ by Theorem~\ref{thm3:optimal-index-upper-bound}, thus \eqref{eq4:PeterLi} implies that  \eqref{eq4:optimallower} holds with $C_3=\frac{\pi^2}{4C_2^2}$.
\end{proof}

\begin{remark}\label{rem4:SU(2)}
For $G=\SU(2)$ and $\SO(3)$, as a direct consequence of an explicit expression of $\lambda_1(G,g_A)$ for any $A\in\GL(3,\R)$, \eqref{eq4:optimallower} and \eqref{eq4:optimalupper} were established  in \cite[Cor.~4.5]{Lauret-SpecSU(2)} as follows:
\begin{align*}
2\,\sigma_2(A)^2 &<\lambda_1(\SU(2),g_A)\leq 8\,\sigma_2(A)^2, 
&
4\,\sigma_2(A)^2 &<\lambda_1(\SO(3),g_A)\leq 8\,\sigma_2(A)^2.
\end{align*}
Moreover, the upper bounds are attained on $g_A$ satisfying that $\sigma_2(A)=\sigma_3(A)$ and $\sigma_1(A)$ is large enough, and the lower bounds are asympotically sharp approached by $g_A$ with $\sigma_1(A)=\sigma_2(A)$ and $\sigma_3(A)\to0$.
\end{remark}

\subsection{First eigenvalue estimate for a restricted subclass}

Analogously to Subsection~\ref{subsec:diam-restricted}, we next look for better indices by restricting the set of metrics to consider. 
The objects introduced in Definition~\ref{def3:bracket-generating-indexA} will be used here; in particular, for $P\in\Ot(m)$ fixed, the subclass of left-invariant metrics $\mathcal M^G(P)$ on $G$ given by the elements $g_{PQD}$ for $D=\diag(\sigma_1,\dots,\sigma_m)$ with $ \sigma_1\geq\dots\geq \sigma_m>0$ and 
\begin{equation}
Q=
\left(\begin{smallmatrix}
Q_1\\ &1\\ &&Q_2
\end{smallmatrix}\right)\in\Ot(m)
\end{equation}
with $Q_1\in\Ot(k-1)$ and $Q_2\in \Ot(m-k)$. 

\begin{theorem}\label{thm4:lambda1(g_PQD)}
Let $P\in\Ot(m)$ and set $k=\ell(P)$. 
We have that 
\begin{equation}\label{eq4:lambda1(g_PQD)geq}
\lambda_1(G,g_{A}) \geq 
\lambda_1(G,\HH_{P,k}, g_I|_{\HH_{P,k}})\;  \sigma_{k}(A)^2 >0
\qquad \text{for every } g_A\in \mathcal M^G(P).
\end{equation}
Furthermore, if $\bar\HH_P\neq G$ 
(e.g.\ if $G$ is semisimple), then 
\begin{equation}\label{eq4:lambda1(g_PQD)leq}
\lambda_1(G,g_{A}) \leq \lambda_1(G/\bar H_P,g_I|_{\bar\fh_P^\perp}) \; \sigma_{k}(A)^2 <\infty
\qquad \text{for every }g_A\in \mathcal M^G(P).
\end{equation}
\end{theorem}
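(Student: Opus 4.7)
The argument will parallel the proof of Theorem~\ref{thm3:diam(g_PQD)}, with Proposition~\ref{prop3:diam-k} replaced by its spectral analogue, Proposition~\ref{prop4:lambda1-k}. Fix $P\in\Ot(m)$, set $k=\ell(P)$, and consider $A=PQD$ with $Q\in\mathcal O(m,k)$ and $D=\diag(\sigma_1,\dots,\sigma_m)\in\mathcal D(m)$.

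The first step is to reprise the block-structure computation from the proof of Theorem~\ref{thm3:diam(g_PQD)} to observe that $\HH_{PQ,j}=\HH_{P,j}$ for $j\in\{k-1,k\}$; in particular $C_{PQ,k}^\infty(G)=C_{P,k}^\infty(G)$, since this space depends only on $\HH_{\cdot,k-1}$. Unlike in Theorem~\ref{thm3:diam(g_PQD)}, however, $PQ$ need not sort $A$, so the next step is to exhibit a sorting matrix for $A$ of the form $PR$ with $R\in\mathcal O(m,k)$. Writing $D_1^2=\diag(\sigma_1^2,\dots,\sigma_{k-1}^2)$ and $D_2^2=\diag(\sigma_{k+1}^2,\dots,\sigma_m^2)$, the block structure of $Q$ gives
\begin{equation*}
QD^2Q^t = (Q_1 D_1^2 Q_1^t) \oplus \sigma_k^2 \oplus (Q_2 D_2^2 Q_2^t).
\end{equation*}
Since $Q_i D_i^2 Q_i^t$ has the same eigenvalues as $D_i^2$ (listed in the correct order), the spectral theorem supplies $R_i\in\Ot$ with $Q_i D_i^2 Q_i^t = R_i D_i^2 R_i^t$; hence $R:=R_1\oplus 1\oplus R_2\in\mathcal O(m,k)$ satisfies $QD^2Q^t=RD^2R^t$ and so $PR$ sorts $A$. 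The same block argument shows $\HH_{PR,j}=\HH_{P,j}$ for $j\in\{k-1,k\}$, and $\sigma_j(A)=\sigma_j$ for all $j$.

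With this reduction in place, Proposition~\ref{prop4:lambda1-k} applied to $A$ with sorting matrix $PR$ and index $k$ yields
\begin{equation*}
\lambda_1(G,\HH_{P,k},g_I|_{\HH_{P,k}})\,\sigma_k(A)^2 \;\leq\; \lambda_1(G,g_A) \;\leq\; \lambda_1(\Delta_I|_{C_{P,k}^\infty(G)})\,\sigma_k(A)^2.
\end{equation*}
Positivity of the left-hand coefficient follows from Lemma~\ref{lem2:sub-spec-discrete}, because $\HH_{P,k}=\HH_{P,\ell(P)}$ is bracket generating by the very definition of $\ell(P)$; this establishes \eqref{eq4:lambda1(g_PQD)geq}. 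For \eqref{eq4:lambda1(g_PQD)leq}, the remark following Proposition~\ref{prop4:lambda1-k} provides the bound $\lambda_1(\Delta_I|_{C_{P,k}^\infty(G)}) \leq \lambda_1(G/\bar H_P,g_I|_{\bar\fh_P^\perp})$, and when $\bar H_P\neq G$ the quantity on the right is finite (and positive) by Remark~\ref{rem2:G/H}. The semisimple case is the hypothesis $\bar H_P\neq G$ since $\fh_P\subsetneq\fg$ forces $\bar H_P\subsetneq G$ by Remark~\ref{rem3:max-subalgebras}.

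The only point requiring care is the block-diagonalization step above: it is the mechanism by which the family $\mathcal M^G(P)$ is engineered so that one fixed subspace $\HH_{P,k}$ and one fixed subalgebra $\bar\fh_P$ control the spectral bounds uniformly across the whole family. Once that reduction is carried out, both inequalities are direct applications of Proposition~\ref{prop4:lambda1-k} together with (i) positivity of the first sub-Laplacian eigenvalue for a bracket-generating horizontal distribution, and (ii) finiteness of the first Laplace eigenvalue on a positive-dimensional normal homogeneous quotient.
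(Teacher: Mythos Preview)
Your proof is correct and follows the same strategy as the paper's: reduce to Proposition~\ref{prop4:lambda1-k} via the block structure of $Q\in\mathcal O(m,k)$, then invoke Lemma~\ref{lem2:sub-spec-discrete} for positivity and Remark~\ref{rem2:G/H} for finiteness.

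There is, however, an unnecessary detour. You assert that ``$PQ$ need not sort $A$'' and therefore construct an auxiliary $R\in\mathcal O(m,k)$ with $PR$ sorting $A$. In fact $PQ$ \emph{does} sort $A=PQD$: since $D\in\mathcal D(m)$ already has its diagonal entries in nonincreasing order, $AA^t=PQD^2Q^tP^t=(PQ)D^2(PQ)^t$ with $D=D(A)$, which is exactly the sorting condition of Notation~\ref{not2:sigma_j(A)}. (Your own block computation confirms this: you could simply take $R_i=Q_i$.) The paper exploits this directly, applying Proposition~\ref{prop4:lambda1-k} with sorting matrix $PQ$ and using $\HH_{PQ,k}=\HH_{P,k}$, $C_{PQ,k}^\infty(G)=C_{P,k}^\infty(G)$, which shortens the argument.
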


\begin{proof}
For $Q$ as above (i.e.\ $Q\in\mathcal O(m,k)$), we proved in the proof of Theorem~\ref{thm3:diam(g_PQD)} that $\HH_{PQ,k}= \HH_{P,k}$, $\CC_{PQ,k}=\CC_{P,k}$, and $\ell(PQ)=\ell(P)=k$. 
Hence, for any $D$ as above (i.e.\ $D\in\mathcal D(m)$), Proposition~\ref{prop4:lambda1-k} implies that 
\begin{equation*}
\lambda_1(G,\HH_{P,k},g_I|_{\HH_{P,k}} )\; \sigma_k(PQD)^2
\leq \lambda_1(G,g_{PQD}) \leq 
\lambda_1(\Delta_I|_{C_{P,k}^\infty(G)})\; \sigma_k(PQD)^2.
\end{equation*}
From Lemma~\ref{lem2:sub-spec-discrete}, it follows that $\lambda_1(G,\HH_{P,k},g_I|_{\HH_{P,k}}) >0$, showing \eqref{eq4:lambda1(g_PQD)geq}. 
	
Since $\fh_P\subset \bar \fh_P \neq \fg$, we have that $C_{P,k}^\infty(P)\supset C^\infty(G)^{\bar H_P} \equiv C^\infty(G/\bar H_P)$. 
Therefore 
$$
\lambda_1(\Delta_I|_{C_{P,k}^\infty(G)})\leq \lambda_1(\Delta_I|_{C^\infty(G)^{\bar H_P}}) =  \lambda_1(G/\bar H_P,g_I|_{\bar\fh_P^\perp}).
$$
That $\lambda_1(G/\bar H_P,g_I|_{\bar\fh_P^\perp})<\infty$ follows from $\bar H_P\neq G$, and \eqref{eq4:lambda1(g_PQD)leq} is proved. 
\end{proof}

We now replace $\lambda_1(G/\bar H_P,g_I|_{\bar \fh_P^\perp})$ in \eqref{eq4:lambda1(g_PQD)leq} by a constant independent from $P$ when $G$ is semisimple.

\begin{corollary}\label{cor4:lambda1-inf-universal}
Let $G$ be a compact connected semisimple Lie group. 
Under the notation introduced in Remark~\ref{rem3:max-subalgebras}, we set
\begin{equation}
C_4'=\max_{1\leq i\leq r} \, \lambda_1(G/H_i,g_I|_{\fh_i^{\perp}}),
\end{equation}
which is positive and depends only on $G$ and $g_I$. 
Then, for any $P\in\Ot(m)$, 
\begin{equation}\label{eq4:lambda1P-geq-independent}
\lambda_1(G,g_{A}) \leq C_4'\;\sigma_{\ell(P)}(A)^2
\qquad\text{for every } A\in\mathcal M^G(P).
\end{equation}
\end{corollary}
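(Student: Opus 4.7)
The plan is to mimic the proof of Corollary~\ref{cor3:diam-inf-universal}, replacing the singular-diameter monotonicity used there with the corresponding spectral monotonicity of the Laplace--Beltrami operator on normal homogeneous spaces. Fix $P\in\Ot(m)$, set $k=\ell(P)$, and take $A\in\mathcal M^G(P)$. Theorem~\ref{thm4:lambda1(g_PQD)} already reduces the claim to producing a uniform bound
\begin{equation*}
\lambda_1(G/\bar H_P,g_I|_{\bar\fh_P^\perp})\leq C_4',
\end{equation*}
independent of $P$, so the whole task amounts to comparing the normal-homogeneous spectrum of $G/\bar H_P$ with those of the finitely many model spaces $G/H_i$.

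First, because $G$ is compact semisimple, there are no dense proper connected subgroups, so $\bar\fh_P\neq\fg$. Remark~\ref{rem3:max-subalgebras} then produces an index $1\leq i\leq r$ and an element $a\in G$ with $\bar\fh_P\subset \Ad_a(\fh_i)$, equivalently $\bar H_P\subset aH_ia^{-1}$. Next, I would invoke the following monotonicity: if $K_1\subseteq K_2$ are closed subgroups of $G$, then $\lambda_1(G/K_1,g_I|_{\fk_1^\perp})\leq \lambda_1(G/K_2,g_I|_{\fk_2^\perp})$. This is an immediate consequence of the spectral formula \eqref{eq2:spec(G/H,g_I)} in Remark~\ref{rem2:G/H}: every $K_2$-invariant vector is $K_1$-invariant, hence $\widehat G_{K_2}\subset \widehat G_{K_1}$, so $\Spec(G/K_2,g_I|_{\fk_2^\perp})\subset \Spec(G/K_1,g_I|_{\fk_1^\perp})$ and the smallest positive eigenvalue can only decrease.

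Applying this with $K_1=\bar H_P$ and $K_2=aH_ia^{-1}$, together with the isometry $(G/aH_ia^{-1},g_I|_{\Ad_a(\fh_i)^\perp})\cong(G/H_i,g_I|_{\fh_i^\perp})$ coming from left translation by $a^{-1}$ and the $\Ad_a$-invariance of $g_I$, we get $\lambda_1(G/\bar H_P,g_I|_{\bar\fh_P^\perp})\leq\lambda_1(G/H_i,g_I|_{\fh_i^\perp})\leq C_4'$, which combined with Theorem~\ref{thm4:lambda1(g_PQD)} yields the claim. Positivity of $C_4'$ is automatic since each $H_i$ is a proper closed subgroup of $G$ of positive codimension, so the last assertion in Remark~\ref{rem2:G/H} gives $\lambda_1(G/H_i,g_I|_{\fh_i^\perp})>0$.

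I do not anticipate any serious obstacle: the corollary is the exact eigenvalue counterpart of Corollary~\ref{cor3:diam-inf-universal}, and the only ingredient beyond what is already in Theorem~\ref{thm4:lambda1(g_PQD)} and Remark~\ref{rem3:max-subalgebras} is the elementary spectral monotonicity under enlarging the isotropy subgroup, which follows from a one-line inclusion of spherical duals. The subtlest point is simply to notice that $\bar\fh_P\neq\fg$ in the semisimple case so that Remark~\ref{rem3:max-subalgebras} actually applies; everything else is bookkeeping.
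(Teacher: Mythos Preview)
Your proposal is correct and follows essentially the same route as the paper's proof: reduce via Theorem~\ref{thm4:lambda1(g_PQD)} to bounding $\lambda_1(G/\bar H_P,g_I|_{\bar\fh_P^\perp})$, use Remark~\ref{rem3:max-subalgebras} to trap $\bar\fh_P$ inside some $\Ad_a(\fh_i)$, and then apply spectral monotonicity under enlarging the isotropy together with the conjugation isometry $G/aH_ia^{-1}\cong G/H_i$. The only difference is that you spell out the monotonicity step via the inclusion $\widehat G_{K_2}\subset\widehat G_{K_1}$, whereas the paper simply writes ``It follows immediately that''; one small slip is that the isometry is induced by conjugation (or right translation, using bi-invariance of $g_I$) rather than left translation by $a^{-1}$, but this does not affect the argument.
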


\begin{proof}
Fix any $P\in\Ot(m)$. 
Since $G$ is semisimple, $\bar\fh_P\neq\fg$, thus $\bar\fh_P\subset \Ad_a(\fh_i)$ for some $i$ and $a\in G$ (see Remark~\ref{rem3:max-subalgebras}). 
We note that $aHa^{-1}$ is the connected subgroup of $G$ with Lie algebra $\Ad_a(\fh_i)$. 
It follows immediately that
\begin{equation*}
\lambda_1(G/\bar H_P,g_I|_{\bar\fh_P^\perp}) \leq 
\lambda_1(G/aH_ia^{-1}, g_I|_{\Ad_a(\fh_i^\perp)}) 
= \lambda_1(G/H_i, g_I|_{\fh_i^\perp})\leq C_4',
\end{equation*}
and the proof is complete by \eqref{eq4:lambda1(g_PQD)leq}. 
\end{proof}

\begin{remark}
There should not exist a lower bound for $\lambda_1(G,g_A)$ for all $g_A\in\mathcal M^G(P)$ independent on $P$ analogous to \eqref{eq4:lambda1P-geq-independent}. 
This is because $\lambda_1(G,\HH_{P,\ell(P)},g_I|_{\HH_{P,\ell(P)}})$ may not be bounded by above uniformly for all $P\in\Ot(m)$. 
\end{remark}

\section{On the EGS conjecture} \label{sec:upperbounds}

In this section we combine the diameter estimates from Section~\ref{sec:diam} and the eigenvalue estimates from Section~\ref{sec:eigenvalues} to give partial answers to the EGS conjecture (Conjecture~\ref{conj:EGS}). 
We still consider $G$ a compact connected Lie group with a fixed $\Ad(G)$-invariant inner product $g_I$ with  orthonormal basis $\mathcal B=\{X_1,\dots,X_m\}$. 
In addition, we will assume that $G$ is non-abelian.

Recall that $\sigma_1(A)\geq\dots\geq \sigma_m(A)>0$ were introduced in Notation~\ref{not2:sigma_j(A)} for $A\in\GL(m,\R)$. 
Theorems~\ref{thm3:optimal-index-upper-bound} and \ref{thm4:optimal-index-upper-bound} imply that
\begin{equation}\label{eq5:optiaml}
\lambda_1(G,g_A)\, \diam(G,g_A)^2 \leq C_2^2\, C_4 \left( \frac{\sigma_{2}(A)} {\sigma_{k_{\max}}(A)} \right)^2
\qquad\text{for every $A\in\GL(m,\R)$}.
\end{equation}
(When $G$ is semisimple, this holds provided\footnote{See the footnote in page 17 for an update.} Condition~\ref{claim} is true.)
This is far from being sufficient to establish the EGS conjecture for a general $G$ since the term $\sigma_2(A)/\sigma_{k_{\max}}(A)$ is of course not bounded by above when $k_{\max}>2$. 
For instance, the following table shows $k_{\max}$ in some standard cases: 
\begin{equation}
\begin{array}{cccc}
G  & k_{\max} &\dim G\\ \hline 
\rule{0pt}{12pt}
\SU(n) & n^2-2n+2&n^2-1\\
\Ut(n) & n^2 & n^2 \\
\SO(n) & \tfrac12 (n^2-3n+4)& \tfrac12 n(n-1)\\
\Sp(n) & 2n^2-3n+4&n(2n+1)\\
\SU(2)\times\SU(2)  & 5 & 6 
\end{array}
\end{equation}
In fact, the next result tells us that \eqref{eq5:optiaml} establishes the EGS conjecture only for those $G$ that is already known. 

\begin{proposition}
For $G$ a non-abelian compact Lie group, $k_{\max}(G)=2$ if and only if $\fg\simeq\su(2)$. 
\end{proposition}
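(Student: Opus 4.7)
\medskip

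\noindent\textbf{Proof plan.} The plan is to analyze the two directions separately, using the fact (already noted after Theorem~\ref{thm3:optimal-index-upper-bound} and in the proof of Theorem~\ref{thm4:optimal-index-lower-bound}) that a compact connected Lie group $G$ which is not semisimple satisfies $k_{\max}(G)=m=\dim G$, because in that case $G$ admits closed subgroups of codimension one.

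For the forward implication $\fg\simeq\su(2)\Rightarrow k_{\max}(G)=2$, I would argue that every proper subalgebra of $\su(2)$ has dimension at most $1$: indeed, $\su(2)$ has rank one, so any abelian subalgebra is at most one-dimensional, and a hypothetical $2$-dimensional subalgebra of a three-dimensional compact simple Lie algebra would have to be abelian (since $\su(2)$ admits an $\Ad$-invariant inner product, any subalgebra is reductive, and a $2$-dimensional reductive Lie algebra is abelian). Consequently every proper connected subgroup of $G$ has dimension $\leq 1$, and the same holds for their closures (their Lie algebras remain proper subalgebras of $\fg$, hence of dimension $\leq 1$). Since a maximal torus of $G$ is a closed subgroup of dimension exactly one, the maximum in the definition of $k_{\max}$ is attained and equals $1$, so $k_{\max}(G)=2$.

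For the converse implication $k_{\max}(G)=2\Rightarrow \fg\simeq\su(2)$, I would first rule out the non-semisimple case: if $G$ were not semisimple, then $k_{\max}(G)=m$, forcing $m=2$; but every $2$-dimensional compact Lie algebra is abelian, contradicting the standing hypothesis that $G$ is non-abelian. Hence $G$ is semisimple. Next, pick a maximal torus $T$ of $G$; since $G$ is non-abelian, $T$ is a proper (closed) subgroup, so by hypothesis $\dim T\leq k_{\max}(G)-1=1$. Thus $G$ has rank one. A compact semisimple Lie algebra decomposes as a direct sum of compact simple ideals whose ranks add up to the total rank, so rank one forces a single simple summand of rank one, which must be $\su(2)$.

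The only routine point to double-check is the claim that $\su(2)$ has no $2$-dimensional subalgebra, but this is standard (it also follows from the fact that rank one forces every abelian subalgebra to have dimension $\leq 1$, together with the reductivity of subalgebras inherited from the bi-invariant metric). There is no real obstacle here; the argument is a short Lie-theoretic book-keeping, and the substantive input is the already-recorded dichotomy between the semisimple and non-semisimple cases for $k_{\max}$.
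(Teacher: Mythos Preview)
Your argument is correct. For the forward implication you spell out what the paper dismisses as ``clear''; your reasoning via reductivity of subalgebras of a compact Lie algebra and the rank-one constraint is sound.

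For the converse, your route differs from the paper's. The paper argues in one stroke: any non-abelian compact Lie algebra contains a subalgebra isomorphic to $\su(2)$ (via the root system of $\fg_\C$), and the corresponding connected subgroup is closed (being the image of the compact group $\SU(2)$); hence if $\fg\not\simeq\su(2)$ this subalgebra is proper and yields a closed proper subgroup of dimension $3$, giving $k_{\max}\geq 4>2$. Your approach instead splits off the non-semisimple case using the codimension-one subgroup observation already recorded in the paper, and then in the semisimple case bounds the rank by the maximal torus. Both are short and valid. The paper's argument is slightly more uniform (no case split) but tacitly relies on closedness of the $\su(2)$ subgroup; yours avoids root systems entirely and uses only the rank, which is arguably more elementary.
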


\begin{proof}
The converse is clear. 
A standard fact obtained by using the root system associated to $\fg_\C$ is that there always exists a subalgebra of $\fg$ isomorphic to $\su(2)$. 
Hence, $k_{\max}\geq \dim\su(2)=3>2$ unless $\fg\simeq \su(2)$, which concludes the proof.  
\end{proof}

We are now in position to give a partial answer to Conjecture~\ref{conj:EGS}. 

\begin{theorem}\label{thm5:EGSweak}
Let $G$ be a compact connected Lie group of dimension $m$. 
Let $\mathcal S$ be a finite union of the following sets:
\begin{enumerate} 
\item \label{item:sigma_1/sigma_m}
$\Sigma(c_0):= \{ g_A: A\in \GL(m,\R)\text{ and } 		\sigma_2(A)\leq c_0\, \sigma_{k_{\max}}(A) \}$ for any $c_0\geq1$ fixed. 

\item \label{item:P} \rule{0pt}{14pt}
$\mathcal M^G(P)$ for any $P\in\Ot(m)$ satisfying $\bar\HH_P\neq G$ (e.g.\ when $G$ is semisimple).
\end{enumerate}
Then, there exists a positive real number $C$ depending on $(G,g_I,\mathcal B, \mathcal S)$ such that 
\begin{equation}\label{eq:lambda_1diam^2leqc_1}
\lambda_1(G,g)\, \diam(G,g)^2\leq C
\qquad\text{for all $g\in \mathcal S$. }
\end{equation}
\end{theorem}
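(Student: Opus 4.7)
The plan is to reduce the finite-union statement to the two prototype cases and exhibit, in each case, an explicit product of bounds from Sections~\ref{sec:diam} and \ref{sec:eigenvalues} in which the $\sigma_j(A)$ factors cancel. Since taking the maximum of finitely many admissible constants is trivially admissible, I would fix a single summand $\mathcal S_0$ of $\mathcal S$ of type \eqref{item:sigma_1/sigma_m} or \eqref{item:P} and prove \eqref{eq:lambda_1diam^2leqc_1} on $\mathcal S_0$.

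For a set of type \eqref{item:sigma_1/sigma_m}, namely $\mathcal S_0=\Sigma(c_0)$, I would combine the two ``optimal upper bound'' theorems. Theorem~\ref{thm4:optimal-index-upper-bound} gives a constant $C_4=C_4(G,g_I)$ with
\[
\lambda_1(G,g_A)\leq C_4\,\sigma_2(A)^2
\qquad\text{for every }A\in\GL(m,\R).
\]
For the diameter, if $G$ is not semisimple then $k_{\max}=m$ and the right inequality of \eqref{eq3:diam-simple-estimates} is enough; if $G$ is semisimple, Theorem~\ref{thm3:optimal-index-upper-bound} (whose hypothesis Condition~\ref{claim} is now known to hold, by the footnote on page~17) gives $\diam(G,g_A)\leq C_2/\sigma_{k_{\max}}(A)$. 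In either case there is a constant $C_2=C_2(G,g_I)$ with $\diam(G,g_A)\sigma_{k_{\max}}(A)\leq C_2$. Multiplying,
\[
\lambda_1(G,g_A)\,\diam(G,g_A)^2 \;\leq\; C_4 C_2^2\left(\frac{\sigma_2(A)}{\sigma_{k_{\max}}(A)}\right)^{\!2} \;\leq\; C_4 C_2^2\, c_0^2,
\]
where the last inequality is precisely the defining condition of $\Sigma(c_0)$.

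For a set of type \eqref{item:P}, namely $\mathcal S_0=\mathcal M^G(P)$ with $\bar H_P\neq G$, set $k=\ell(P)$ and combine Theorems~\ref{thm3:diam(g_PQD)} and \ref{thm4:lambda1(g_PQD)}. The first supplies
\[
\diam(G,g_A)\;\leq\;\frac{\diam(G,\HH_{P,k},g_I|_{\HH_{P,k}})}{\sigma_k(A)},
\]
with the numerator finite by the Chow--Rashevskii theorem (Theorem~\ref{thm2:Chow}), since $\HH_{P,k}$ is bracket generating by the definition of $\ell(P)$. The second, which uses $\bar H_P\neq G$, gives
\[
\lambda_1(G,g_A)\;\leq\;\lambda_1(G/\bar H_P,g_I|_{\bar\fh_P^\perp})\;\sigma_k(A)^2,
\]
with the coefficient finite by Remark~\ref{rem2:G/H}. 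The $\sigma_k(A)^{\pm 2}$ factors cancel on taking the product, yielding
\[
\lambda_1(G,g_A)\,\diam(G,g_A)^2\;\leq\;\lambda_1(G/\bar H_P,g_I|_{\bar\fh_P^\perp})\,\diam(G,\HH_{P,k},g_I|_{\HH_{P,k}})^2,
\]
a constant depending only on $G$, $g_I$, $\mathcal B$ and $P$. Taking $C$ to be the maximum of the finitely many constants obtained from the summands of $\mathcal S$ completes the argument.

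Since every ingredient has already been isolated in Sections~\ref{sec:diam}--\ref{sec:eigenvalues}, there is no genuine obstacle; the only conceptual point to check is that the upper bounds for $\lambda_1$ and the upper bounds for $\diam$ carry \emph{compatible} powers of $\sigma_k(A)$ (i.e.\ with the same index $k$) on each subset $\mathcal S_0$. This compatibility is exactly what is guaranteed by the hypothesis $\sigma_2(A)\leq c_0\,\sigma_{k_{\max}}(A)$ in case \eqref{item:sigma_1/sigma_m}, and is built into the definition of $\mathcal M^G(P)$ via the common index $k=\ell(P)$ in case \eqref{item:P}.
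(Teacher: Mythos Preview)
Your proof is correct and follows essentially the same approach as the paper: reduce to a single summand, then for $\Sigma(c_0)$ combine the optimal upper bounds from Theorems~\ref{thm3:optimal-index-upper-bound} and \ref{thm4:optimal-index-upper-bound} (which the paper has already packaged as \eqref{eq5:optiaml}), and for $\mathcal M^G(P)$ combine Theorems~\ref{thm3:diam(g_PQD)} and \ref{thm4:lambda1(g_PQD)} so that the $\sigma_{\ell(P)}(A)$ factors cancel. Your explicit treatment of the non-semisimple case via \eqref{eq3:diam-simple-estimates} is a nice clarification but not a genuine departure.
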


\begin{proof}
Clearly, it is sufficient to consider the case when $\mathcal S$ is a single set as in \eqref{item:sigma_1/sigma_m} or \eqref{item:P}. 
By \eqref{eq5:optiaml}, the case $\mathcal S=\Sigma(c_0)$ for some $c_0>0$ follows immediately with $C=C_2^2C_4 c_0^2$.

We now assume that $\mathcal S=\mathcal M^G(P)$ for some $P\in\Ot(m)$. 
Set $k=\ell(P)$, which depends on $P$ and also on $\mathcal B$. 
For any $g_A\in\mathcal M^G(P)$, Theorems~\ref{thm3:diam(g_PQD)} and \ref{thm4:lambda1(g_PQD)} yield
\begin{equation*}
\lambda_1(G,g_{A})\, \diam(G,g_{A})^2 \leq 
\lambda_1(G/\bar H_P,g_I|_{\bar\fh_P^\perp})\,
{\diam(G,\HH_{P,k}, g_I|_{\HH_{P,k}})^2} . 
\end{equation*}
The assertion follows since the term at the right-hand side depends only on $G$, $g_I$, $\mathcal B$, and $P$. 
\end{proof}

Theorem~\ref{thm:main} is a particular case of Theorem~\ref{thm5:EGSweak}.
In fact, the set $\mathcal M^G(\fa,Y)$ in Theorem~\ref{thm:main} coincides with $\mathcal M^G(P)$ for any $P\in\Ot(m)$ satisfying that $Y \in \Span_\R \{X_{\ell(P)}\}$ and $\fa=\Span_\R\{X_1(P),\dots,X_{\ell(P)-1}(P)\}$.

\begin{proof}[Proof of Corollary~\ref{cor:main}]
There is $P\in\Ot(m)$ such that $Y_j=X_j(P)$ for all $1\leq j\leq m$. 
Let $\mathbb S$ denote the set of permutation $m\times m$ matrices.
Of course, $\mathbb S$ is contained in $\Ot(m)$ and it has $m!$ elements. 
One has that
\begin{align}
\mathcal M^G(\mathcal B) 
&= \{g\in\mathcal M^G: g(Y_i,Y_j)=0 \text{ for all } i\neq j \}
\\ \notag
&= \{g_{PD}: D=\diag(d_1,\dots,d_m),\; d_1,\dots,d_m>0\}
\\ \notag
&= \{g_{PRD}: D\in\mathcal D(m),\, R\in\mathbb S\}
\\ \notag 
&\subset \bigcup_{R\in \mathbb S} \mathcal M^G(PR). 
\end{align}
Hence, the assertion follows immediately by Theorem~\ref{thm5:EGSweak} since $\mathbb S$ is finite.
\end{proof}

\begin{remark}\label{rem5:dimension}
Note that every left-invariant metric is in $\mathcal M^G(P)$ for some $P\in\Ot(m)$. 
In fact, for any $g_A\in\mathcal M^G$ with $A\in\GL(m,\R)$,  $g_A\in \mathcal M^G(P)$ for any $P\in\Ot(m)$ sorting $A$.

We next show that the order of $\dim \mathcal M^G(P)$ increase as $\dim \mathcal M^G$ when $m=\dim G$ grows. 
We clearly have that $\dim\mathcal M^G =\dim\{\text{$m\times m$ positive definite matrices}\}= \frac12 m(m+1)$. 
We claim that 
\begin{equation}\label{eq:dimM(P)}
\begin{aligned}
\dim \mathcal M^G(P) &= \dim \mathcal D(m)+\dim \mathcal O(m,k)= m+\dim\Ot(k-1)+\dim\Ot(m-k) 
\\
&= m+\tfrac12 (k-1)(k-2) + \tfrac12 (m-k)(m-k-1).
\end{aligned}
\end{equation}
Recall that $g_A=g_B$ if and only if $AR=B$ for some $R\in\Ot(m)$. 
We now assume that two elements in $\mathcal M^G(P)$ coincide, say $g_{PQD}$ and $g_{PQ'D'}$ for some $Q,Q'\in\mathcal O(m,k)$ and $D,D'\in\mathcal D(m)$. Thus $PQD=PQ'D'R$ for some $R\in\Ot(m)$. 
It follows that $D=D'$, then $D=Q^tQ'DR$. 
By considering $D$ in the subspace of $\mathcal D(m)$ given by matrices with simple spectrum (i.e.\ all the diagonal entries are different pairwise), which has dimension $m=\dim \mathcal D(m)$, we obtain that $Q=Q'$ and $R=I$, and the claim follows. 

We now suppose that $P\in\Ot(m)$ satisfies $\ell(P)=2$, i.e.\ $\{X_1(P), X_2(P)\}$ is bracket-generating. 
Such element always exists if $G$ is semisimple. 
Then, \eqref{eq:dimM(P)} gives $\dim \mathcal M^G(P) = m+ \tfrac12 (m-2)(m-3)$ and $\dim \mathcal M^G-\dim \mathcal M^G(P)= 2m-3$.

Of course, we are not able to compute the dimensions of the isometry classes in $\mathcal M^G(P)$ since it is not know in general for $\mathcal M^G$. 
\end{remark}

We conclude the article by including some incomplete ideas of how to construct a counterexample or a proof-by-contradiction of Conjecture~\ref{conj:EGS}. 
For $A\in\GL(m,\R)$, we abbreviate $\dist_A(a,b)$ the distance between $a,b\in G$ with respect to the Riemannian metric $g_A$. 

\begin{proposition}\label{prop6:false} 
Assume that Conjecture~\ref{conj:EGS} is false, that is, there is a compact connected Lie group $G$ such that 
\begin{equation}\label{eq6:false}
\sup_{g\in \mathcal M^G} \lambda_1(G,g)\, \diam(G,g)^2=\infty. 
\end{equation}
Then, there are sequences $\{P_n\}_{n\in\N}\subset \Ot(m)$, $\{D_n\}_{n\in\N}\subset \mathcal D(m)$, $\{a_n\}_{n\in\N}\subset G$, and elements $P_0\in\Ot(m)$, $k\in\Z$, and $a_0\in G$, satisfying the following properties: 
\begin{itemize}
\item[(I)] $\displaystyle \lambda_1(G,g_{P_nD_n})\, \diam(G,g_{P_nD_n})^2\geq n$ for all $n\in\N$;
	
\item[(II)] $\displaystyle \lim_{n\to\infty} P_n = P_0$;

\item[(III)] $\ell(P_n)=k$ for all $n\in\N$;
	
\item[(IV)] $\displaystyle \diam(G,g_{P_nD_n}) = \dist_{P_nD_n}(e,a_n)$  for all $n\in\N$;
	
\item[(V)] $\displaystyle \lim_{n\to\infty} a_n = a_0$. 
\end{itemize}
\end{proposition}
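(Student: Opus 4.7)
The plan is to produce the claimed sequences by a chain of compactness extractions, starting from an arbitrary sequence of metrics that witnesses the supposed unboundedness in \eqref{eq6:false} and refining it at each step.

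First, using the assumption \eqref{eq6:false}, I would pick for each $n\in\N$ a metric $g_{A_n}\in\mathcal M^G$ with $A_n\in\GL(m,\R)$ satisfying $\lambda_1(G,g_{A_n})\,\diam(G,g_{A_n})^2\geq n$. By Remark~\ref{rem2:easy-consequences}(v), for each $A_n$ there exist $P_n\in\Ot(m)$ and $D_n\in\mathcal D(m)$ with $A_nA_n^t=P_nD_n^2P_n^t$, so that $g_{A_n}=g_{P_nD_n}$. This produces a first candidate sequence satisfying (I).

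Next, I would apply three successive subsequence extractions, at each stage keeping property (I) intact since it is preserved by passing to any subsequence. Since $\Ot(m)$ is compact, passing to a subsequence gives $P_n\to P_0$ for some $P_0\in\Ot(m)$, which is (II). The bracket-generating index $\ell(P_n)\in\{1,\dots,m\}$ takes only finitely many values, so on a further subsequence it stabilizes to a single integer $k$, giving (III). For (IV), each Riemannian manifold $(G,g_{P_nD_n})$ is compact, so its diameter is attained at some pair $b_n,c_n\in G$; setting $a_n=b_n^{-1}c_n$ and invoking left-invariance of $g_{P_nD_n}$ yields
\begin{equation*}
\dist_{P_nD_n}(e,a_n)=\dist_{P_nD_n}(b_n,c_n)=\diam(G,g_{P_nD_n}),
\end{equation*}
which is (IV). Finally, compactness of $G$ allows one more extraction so that $a_n\to a_0$ for some $a_0\in G$, establishing (V).

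Since each extraction preserves the properties already obtained, the final (four-times-extracted) subsequence simultaneously satisfies (I)--(V). There is no substantive obstacle: the proposition is a pure bookkeeping consequence of the compactness of $\Ot(m)$, the finiteness of the range of $\ell$, the compactness of $G$, and the basic fact that the diameter of a compact Riemannian manifold is attained. The only point requiring a little care is verifying that the decomposition $g_{A_n}=g_{P_nD_n}$ is available for every element of $\mathcal M^G$, which is exactly the content of Remark~\ref{rem2:easy-consequences}(v).
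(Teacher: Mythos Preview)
Your proposal is correct and follows essentially the same approach as the paper: choose a witnessing sequence, decompose each metric as $g_{P_nD_n}$, and then successively pass to subsequences using compactness of $\Ot(m)$, finiteness of the range of $\ell$, and compactness of $G$. The only cosmetic difference is that you spell out the left-invariance argument to realize the diameter from $e$, whereas the paper simply asserts the existence of such an $a_n$.
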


\begin{proof}
It follows from \eqref{eq6:false} that there exists a sequence $\{A_n^{(1)}\}_{n\in \N} \subset \GL(m,\R)$ such that 
\begin{equation}\label{eq6:A_n^1}
\lambda_1(G,g_{A_n^{(1)}})\, \diam(G,g_{A_n^{(1)}})^2\geq n
\qquad\text{for all $n\in\N$. }
\end{equation}
For each $n\in\N$, let $P_n^{(1)}\in\Ot(m)$ and $D_n^{(1)}\in\mathcal D(m)$ such that $A_n^{(1)} (A_n^{(1)})^t=P_n^{(1)} (D_n^{(1)})^2 (P_n^{(1)})^t$, thus $g_{A_n^{(1)}} = g_{P_n^{(1)}D_n^{(1)}}$.
Since $\Ot(m)$ is compact, there is a subsequence  $\{A_n^{(2)}\}_{n\in\N}$ of $\{A_n^{(1)}\}_{n\in\N}$ satisfying that, for $P_n^{(2)}\in\Ot(m)$, $D_n^{(1)}\in\mathcal D(m)$ with $A_n^{(2)} (A_n^{(2)})^t=P_n^{(2)} (D_n^{(2)})^2 (P_n^{(2)})^t$, the sequence $\{P_n^{(2)}\}_{n\in\N}$ converges to some $P_0^{(2)}\in\Ot(m)$. 
Clearly, \eqref{eq6:A_n^1} still holds with $A_n^{(1)}$ replaced by $A_n^{(2)}$.

Since $\ell(P_n^{(2)})$ lies in the finite set $\{1,\dots,m\}$, at least one index $k$ is repeated infinitely many times.
Hence, we can assume, by taking a new subsequence, that $\ell(P_n^{(2)})$ is constant for all $n$. 

For each $n\in\N$, let $a_n^{(2)}$ be any element in $G$ satisfying that $\diam(G,g_{A_n^{(2)}}) = \dist_{{A_{n}^{(2)}}} (e,a_n^{(2)})$.
Since $G$ is compact, there is a new subsequence $\{A_n^{(3)}\}_{n\in\N}$ of $\{A_n^{(2)}\}_{n\in\N}$, with corresponding subsequences $\{P_n^{(3)}\}_{n\in\N}$ of $\{P_n^{(2)}\}_{n\in\N}$, $\{D_n^{(3)}\}_{n\in\N}$ of $\{D_n^{(2)}\}_{n\in\N}$, and $\{a_n^{(3)}\}_{n\in\N}$ of $\{a_n^{(2)}\}_{n\in\N}$, satisfying in addition that $a_n^{(3)}$ converges to some $a_0\in G$. 
\end{proof}

Note that, in the situation of Proposition~\ref{prop6:false}, Theorem~\ref{thm5:EGSweak} implies that 
\begin{equation}
\lim_{n\to\infty} {\sigma_2(D_n)}\, {\sigma_{k_{\max}}(D_n)^{-1}} =\infty. 
\end{equation}

One may think that, since $P_n$ is very close to $P_0$ for $n$ large, then 
\begin{equation}
|\lambda_1(G,g_{P_nD_n})\, \diam(G,g_{P_nD_n})^2 - \lambda_1(G,g_{P_0D_n})\, \diam(G,g_{P_0D_n})^2|
\end{equation}
will be small, or at least bounded by above. 
If this is true, then 
\begin{align}
\lim_{n\to\infty} \lambda_1(G,g_{P_0D_n})\, \diam(G,g_{P_0D_n})^2 
=\infty,
\end{align}
which contradicts Theorem~\ref{thm5:EGSweak}.

\bibliographystyle{plain}

\end{document}